\DeclareMathAlphabet{\mathcal}{OMS}{cmsy}{m}{n}
\def\ps@pprintTitle{%
 \let\@oddhead\@empty
 \let\@evenhead\@empty
 \def\@oddfoot{\centerline{\thepage}}%
 \let\@evenfoot\@oddfoot}
\newcommand{\bbC}{\mathbb{C}}
\newcommand{\bbF}{\mathbb{F}}
\newcommand{\bbH}{\mathbb{H}}
\newcommand{\bbK}{\mathbb{K}}
\newcommand{\bbR}{\mathbb{R}}
\newcommand{\bbT}{\mathbb{T}}
\newcommand{\bbZ}{\mathbb{Z}}
\newcommand{\bfA}{\mathbf{A}}
\newcommand{\bfC}{\mathbf{C}}
\newcommand{\bfe}{\mathbf{e}}
\newcommand{\bfE}{\mathbf{E}}
\newcommand{\bfF}{\mathbf{F}}
\newcommand{\bfI}{\mathbf{I}}
\newcommand{\bfJ}{\mathbf{J}}
\newcommand{\bfP}{\mathbf{P}}
\newcommand{\bfT}{\mathbf{T}}
\newcommand{\bfx}{\mathbf{x}}
\newcommand{\bfy}{\mathbf{y}}
\newcommand{\bfone}{\boldsymbol{1}}
\newcommand{\bfxi}{\boldsymbol{\xi}}
\newcommand{\bfXi}{\boldsymbol{\Xi}}
\newcommand{\bfphi}{\boldsymbol{\varphi}}
\newcommand{\bfPhi}{\boldsymbol{\Phi}}
\newcommand{\bfpsi}{\boldsymbol{\psi}}
\newcommand{\bfPsi}{\boldsymbol{\Psi}}
\newcommand{\bfchi}{\boldsymbol{\chi}}
\newcommand{\bfzeta}{\boldsymbol{\zeta}}
\newcommand{\bfdelta}{\boldsymbol{\delta}}
\newcommand{\calA}{\mathcal{A}}
\newcommand{\calB}{\mathcal{B}}
\newcommand{\calD}{\mathcal{D}}
\newcommand{\calE}{\mathcal{E}}
\newcommand{\calG}{\mathcal{G}}
\newcommand{\calH}{\mathcal{H}}
\newcommand{\calK}{\mathcal{K}}
\newcommand{\calM}{\mathcal{M}}
\newcommand{\calN}{\mathcal{N}}
\newcommand{\calS}{\mathcal{S}}
\newcommand{\calU}{\mathcal{U}}
\newcommand{\calV}{\mathcal{V}}
\newcommand{\rmc}{\mathrm{c}}
\newcommand{\rmC}{\mathrm{C}}
\newcommand{\rme}{\mathrm{e}}
\newcommand{\rmi}{\mathrm{i}}
\newcommand{\rms}{\mathrm{s}}
\newcommand{\tr}{\operatorname{tr}}
\newcommand{\Tr}{\operatorname{Tr}}
\newcommand{\coh}{\operatorname{coh}}
\newcommand{\ETF}{\operatorname{ETF}}
\newcommand{\Fro}{{\operatorname{Fro}}}
\newcommand{\RDS}{\operatorname{RDS}}
\newcommand{\dist}{\operatorname{dist}}
\newcommand{\Span}{\operatorname{span}}
\newcommand{\abs}[1]{|{#1}|}
\newcommand{\Bigparen}[1]{\Bigl({#1}\Bigr)}
\newcommand{\bigbracket}[1]{\bigl[{#1}\bigr]}
\newcommand{\set}[1]{\{{#1}\}}
\newcommand{\bigset}[1]{\bigl\{{#1}\bigr\}}
\newcommand{\norm}[1]{\|{#1}\|}
\newcommand{\Bignorm}[1]{\Bigl\|{#1}\Bigr\|}
\newcommand{\ip}[2]{\langle{#1},{#2}\rangle}
\newtheorem{theorem}{Theorem}[section]
\newtheorem{lemma}[theorem]{Lemma}
\theoremstyle{definition}
\newtheorem{example}[theorem]{Example}
\newtheorem{definition}[theorem]{Definition}
\begin{document}
\begin{frontmatter}
\title{Harmonic equiangular tight frames comprised of regular simplices}

\author{Matthew Fickus}
\ead{Matthew.Fickus@afit.edu}
\author{Courtney A. Schmitt}

\address{Department of Mathematics and Statistics, Air Force Institute of Technology, Wright-Patterson AFB, OH 45433}

\begin{abstract}
An equiangular tight frame (ETF) is a sequence of unit-norm vectors in a Euclidean space whose coherence achieves equality in the Welch bound,
and thus yields an optimal packing in a projective space.
A regular simplex is a simple type of ETF in which the number of vectors is one more than the dimension of the underlying space.
More sophisticated examples include harmonic ETFs which equate to difference sets in finite abelian groups.
Recently, it was shown that some harmonic ETFs are comprised of regular simplices.
In this paper, we continue the investigation into these special harmonic ETFs.
We begin by characterizing when the subspaces that are spanned by the ETF's regular simplices form an equi-isoclinic tight fusion frame (EITFF), which is a type of optimal packing in a Grassmannian space.
We shall see that every difference set that produces an EITFF in this way also yields a complex circulant conference matrix.
Next, we consider a subclass of these difference sets that can be factored in terms of a smaller difference set and a relative difference set.
It turns out that these relative difference sets lend themselves to a second, related and yet distinct, construction of complex circulant conference matrices.
Finally, we provide explicit infinite families of ETFs to which this theory applies.
\end{abstract}

\begin{keyword}
equiangular tight frame \sep difference set \sep conference matrix \MSC[2010] 42C15
\end{keyword}
\end{frontmatter}

%%%%%%%%%%%%%%%%%%%%%%%%%%%%%%%%%%%%%%%%%%%%%%%%%%%%%%%%%%%%%%%%
\section{Introduction}
%%%%%%%%%%%%%%%%%%%%%%%%%%%%%%%%%%%%%%%%%%%%%%%%%%%%%%%%%%%%%%%%

Let $\bbH$ be a $D$-dimensional complex Hilbert space whose inner product is conjugate-linear in its first argument,
and let $\calN$ be an $N$-element indexing set.
The \textit{Welch bound}~\cite{Welch74} is a lower bound on the \textit{coherence} of a sequence $\set{\bfphi_n}_{n\in\calN}$ of nonzero vectors in $\bbH$:
\begin{equation}
\label{eq.Welch bound}
\smash{\bigbracket{\tfrac{N-D}{D(N-1)}}^{\frac12}
\leq\coh(\{\bfphi_n\}_{n\in\calN})
:=\max_{n\not=n'}
\tfrac{\abs{\ip{\bfphi_n}{\bfphi_{\smash {n'}}}}}{\norm{\bfphi_n}\norm{\bfphi_{\smash{n'}}}}}.
\end{equation}
It is well known~\cite{StrohmerH03} that unit norm vectors $\set{\bfphi_n}_{n\in\calN}$ in $\bbH$ achieve equality in \eqref{eq.Welch bound} if and only if they form an \textit{equiangular tight frame} (ETF) for $\bbH$,
namely when there exists $C>0$ such that
$C\norm{\bfx}^2=\sum_{n\in\calN}\abs{\ip{\bfphi_n}{\bfx}}^2$ for all $\bfx\in\bbH$ (tightness)
and $\abs{\ip{\bfphi_n}{\bfphi_{n'}}}$ is constant over all $n\neq n'$ (equiangularity).
In particular, the lines spanned by an ETF's vectors have the property that the minimum angle between any pair of them is as large as possible,
and so are an optimal packing of points in projective space.
Because of this optimality,
ETFs arise in various applications including waveform design for wireless communication~\cite{StrohmerH03},
compressed sensing~\cite{BajwaCM12,BandeiraFMW13},
quantum information theory~\cite{Zauner99,RenesBSC04} and algebraic coding theory~\cite{JasperMF14}.

ETFs are tricky to construct~\cite{FickusM16}.
To elaborate, letting ``$\ETF(D,N)$" denote an $N$-vector ETF for a $D$-dimensional space $\bbH$,
$\ETF(D,D)$ and $\ETF(D,D+1)$ correspond to orthonormal bases and regular simplices for $\bbH$, respectively, and so exist for every $D$.
Apart from these trivial examples,
every other known infinite family of ETFs arises from some type of combinatorial design.
Real ETFs in particular are equivalent to a subclass of strongly regular graphs~\cite{vanLintS66,Seidel76,HolmesP04,Waldron09},
and such graphs are well studied~\cite{Brouwer07,Brouwer17,CorneilM91}.
This equivalence has been partially generalized to the complex setting in various ways,
including approaches that exploit properties of roots of unity~\cite{BodmannPT09,BodmannE10},
abelian distance-regular covers of complete graphs~\cite{CoutinkhoGSZ16,FickusJMPW19},
and association schemes~\cite{IversonJM16}.
Infinite families of ETFs whose \textit{redundancy} $\frac ND$ is either nearly or exactly two arise from the related concepts of conference matrices, Hadamard matrices, Paley tournaments and Gauss sums~\cite{StrohmerH03,HolmesP04,Renes07,Strohmer08}.
Other constructions are more flexible, allowing one to prescribe the order of magnitude of $D$ and $\frac ND$ almost independently,
including harmonic ETFs and Steiner ETFs.
As detailed in the next section,
harmonic ETFs are equivalent to difference sets in finite abelian groups~\cite{Turyn65,StrohmerH03,XiaZG05,DingF07}.
Meanwhile, Steiner ETFs arise from balanced incomplete block designs~\cite{GoethalsS70,FickusMT12}.
This construction has recently been generalized to yield new infinite families of ETFs arising from projective planes that contain hyperovals,
Steiner triple systems, and group divisible designs~\cite{FickusMJ16,FickusJMP18,FickusJ19}.

By construction, a Steiner ETF is \textit{comprised of regular simplices} in the sense that its vectors can be partitioned into subsequences, each of which is a regular simplex for its span.
Every harmonic ETF arising from a McFarland difference set is known to be unitarily equivalent to a Steiner ETF, and so also has this structure~\cite{JasperMF14}.
In a recent paper~\cite{FickusJKM18}, it was shown that other harmonic ETFs,
including those arising from the complements of certain Singer and twin prime power difference sets, are comprised of regular simplices despite not being unitarily equivalent to any Steiner ETF.
There, it was further shown that when an ETF is comprised of regular simplices,
the subspaces spanned by these simplices form a particular type of optimal packing in Grassmannian space known as an \textit{equi-chordal tight fusion frame} (ECTFF), achieving the simplex bound of~\cite{ConwayHS96}.

Here, we continue this investigation into harmonic ETFs that are comprised of regular simplices.
In the next section, we establish notation and review known concepts that we shall use later on.
In Section~3, we better characterize the properties of difference sets that lead to ETFs comprised of regular simplices; see Theorem~\ref{thm.fine}.
In Theorem~\ref{thm.EITFF}, we then characterize when the subspaces spanned by these simplices form a special type of ECTFF known as an \textit{equi-isoclinic tight fusion frame} (EITFF).
This occurs for some, but not all, of the ETFs considered in~\cite{FickusJKM18}.
We further show that every difference set that produces an EITFF in this way also yields a complex \textit{circulant conference matrix} $\bfC$,
namely an $(S+1)\times(S+1)$ circulant matrix whose diagonal entries are zero,
whose off-diagonal entries are unimodular and for which $\bfC^*\bfC=S\bfI$.
In Section~4, we refine this analysis further,
showing in Theorem~\ref{thm.composite} that a special class of these ETFs arise from difference sets that are a Gordon-Mills-Welch sum of a relative difference set and a smaller difference set.
We further show in Theorem~\ref{thm.simplicial} that these resulting relative difference sets yield collections of regular simplices that are \textit{mutually unbiased} in the quantum-information-theoretic sense,
as well as complex circulant conference matrices in a way that is related to, but distinct from, the method of Section~3.
We then show that two known families of difference sets yield ETFs with these extraordinary properties,
namely the complements of certain Singer difference sets (Theorem~\ref{thm.Singer}),
and the complements of certain twin prime power difference sets (Theorem~\ref{thm.TPP}).
Overall, these two methods yield $(S+1)\times(S+1)$ circulant conference matrices when either $S=Q+1$ where $Q$ is a prime power or $S=Q+2$ where $Q$ and $Q+2$ are twin prime powers with $Q\equiv 3\bmod 4$.

%%%%%%%%%%%%%%%%%%%%%%%%%%%%%%%%%%%%%%%%%%%%%%%%%%%%%%%%%%%%%%%%
\section{Background}
%%%%%%%%%%%%%%%%%%%%%%%%%%%%%%%%%%%%%%%%%%%%%%%%%%%%%%%%%%%%%%%%

Let $z^*$ be the complex conjugate of $z\in\bbC$.
More generally, let $\bfA^*$ denote the adjoint of an operator $\bfA$ between two complex Hilbert spaces.
For any $N$-element indexing set $\calN$,
let $\ip{\bfy_1}{\bfy_2}:=\sum_{n\in\calN}[\bfy_1(n)]^*\bfy_2(n)$ be the standard inner product on  $\bbC^\calN:=\{\bfy:\calN\to\bbC\}$.
For any $M$-element indexing set $\calM$,
we can regard a linear operator from $\bbC^\calN$ to $\bbC^\calM$ as a matrix whose entries are indexed by $\calM\times\calN$, namely as a member of $\bbC^{\calM\times\calN}:=\{\bfA:\calM\times\calN\to\bbC\}$,
a space we equip with the Frobenius (Hilbert-Schmidt) inner product, $\ip{\bfA_1}{\bfA_2}_\Fro:=\Tr(\bfA_1^*\bfA_2^{})$.

The \textit{synthesis operator} of a sequence of vectors $\{\bfphi_n\}_{n\in\calN}$ in Hilbert space $\bbH$ is $\bfPhi:\bbC^\calN\to\bbH$,
$\bfPhi\bfy:=\sum_{n\in\calN}\bfy(n)\bfphi_n$.
Its adjoint is the \textit{analysis operator} $\bfPhi^*:\bbH\to\bbC^\calN$, $(\bfPhi^*\bfx)(n)=\ip{\bfphi_n}{\bfx}$.
In the special case where $\bbH=\bbC^\calM$,
$\bfPhi$ is the $\calM\times\calN$ matrix whose $n$th column is $\bfphi_n$,
and $\bfPhi^*$ is its $\calN\times\calM$ conjugate-transpose.
Composing these operators yields the \textit{frame operator} $\bfPhi\bfPhi^*:\bbH\to\bbH$, $\bfPhi\bfPhi^*\bfx=\sum_{n\in\calN}\ip{\bfphi_n}{\bfx}\bfphi_n$ and the $\calN\times\calN$ \textit{Gram matrix} $\bfPhi^*\bfPhi:\bbC^\calN\to\bbC^\calN$ whose $(n,n')$th entry is $(\bfPhi^*\bfPhi)(n,n')=\ip{\bfphi_n}{\bfphi_{n'}}$.
We sometimes also regard each vector $\bfphi_n$ as a degenerate synthesis operator $\bfphi_n:\bbC\to\bbH$, $\bfphi_n(y)=y\bfphi_n$,
an operator whose adjoint is the linear functional $\bfphi_n^*:\bbH\to\bbC$, $\bfphi_n^*\bfx=\ip{\bfphi_n}{\bfx}$.
Under this notation, the frame operator of $\{\bfphi_n\}_{n\in\calN}$ is
$\bfPhi\bfPhi^*=\sum_{n\in\calN}\bfphi_n^{}\bfphi_n^*$.

We say that $\{\bfphi_n\}_{n\in\calN}$ is a \textit{($C$-)tight frame} for $\bbH$ when $\bfPhi\bfPhi^*=C\bfI$ for some $C>0$.
In this case,
when the vectors $\{\bfphi_n\}_{n\in\calN}$ are regarded as members of some (larger) Hilbert space $\bbK$ which contains $\bbH=\Span\{\bfphi_n\}_{n\in\calN}$ as a (proper) subspace,
we say that $\{\bfphi_n\}_{n\in\calN}$ is a \textit{tight frame for its span}; elsewhere in the literature, such sequences are sometimes called ``tight frame sequences."
Here the analysis operator $\bfPhi^*:\bbH\to\bbC^\calN$ extends to an operator $\bfPhi^*:\bbK\to\bbC^\calN$ and $\{\bfphi_n\}_{n\in\calN}$ is a tight frame for its span precisely when $\bfPhi\bfPhi^*\bfx=C\bfx$ for all $\bfx\in\bbH=\Span(\{\bfphi_n\}_{n\in\calN})=\rmC(\bfPhi)$.
As shown in~\cite{FickusMJ16}, this is equivalent to having either $\bfPhi\bfPhi^*\bfPhi=C\bfPhi$, $(\bfPhi\bfPhi^*)^2=C\bfPhi\bfPhi^*$ or $(\bfPhi^*\bfPhi)^2=C\bfPhi^*\bfPhi$.
In particular, $\{\bfphi_n\}_{n\in\calN}$ is a $C$-tight frame for some $D$-dimensional space if and only if its Gram matrix $\bfPhi^*\bfPhi$ has eigenvalues $C$ and $0$ with multiplicity $D$ and $N-D$, respectively.

A \textit{Naimark complement} of an $N$-vector $C$-tight frame $\{\bfphi_n\}_{n\in\calN}$ for a $D$-dimensional space $\bbH$ is any sequence $\{\bfpsi_n\}_{n\in\calN}$ of vectors in some space $\bbK$ such that $\bfPhi^*\bfPhi+\bfPsi^*\bfPsi=C\bfI$.
Since $\bfPsi^*\bfPsi$ has eigenvalues $C$ and $0$ with multiplicity $N-D$ and $D$,
respectively, $\{\bfpsi_n\}_{n\in\calN}$ is a $C$-tight frame for its $(N-D)$-dimensional span.
Being defined in terms of Gram matrices,
Naimark complements are unique up to unitary transformations.
They exist whenever $N>D$:
one way to construct one is to regard $\bbH$ as $\bbC^D$,
and take $\set{\bfpsi_n}_{n\in\calN}$ to be the columns of the $(N-D)\times\calN$ matrix $\bfPsi$ whose rows, when taken together with the rows of $\bfPhi$, form an equal-norm orthogonal basis for $\bbC^N$.

\subsection{Equi-chordal and equi-isoclinic tight fusion frames}

When $\set{\bfphi_n}_{n\in\calN}$ is a sequence of unit norm vectors,
its frame operator $\bfPhi\bfPhi^*=\sum_{n\in\calN}\bfphi_n^{}\bfphi_n^*$ is the sum of the orthogonal projection operators onto their $1$-dimensional spans.
More generally, if $\{\calU_n\}_{n\in\calN}$ is any sequence of $M$-dimensional subspaces of $\bbH$,
its \textit{fusion frame operator} is the sum of the corresponding orthogonal projection operators $\set{\bfP_n}_{n\in\calN}$.
In particular, $\{\calU_n\}_{n\in\calN}$ is a \textit{tight fusion frame} (TFF) for $\bbH$ if there exists $C>0$ such that $C\bfI=\sum_{n\in\calN}\bfP_n$.
Here, the tight fusion frame constant is necessarily $C=\frac{MN}{D}$ since $CD=\Tr(C\bfI)=\sum_{n\in\calN}\Tr(\bfP_n)=MN$.
As such, any sequence $\{\calU_n\}_{n\in\calN}$ of $M$-dimensional subspaces of $\bbH$ satisfies
\begin{equation*}
0
\leq\Bignorm{\sum_{n\in\calN}\bfP_n-\tfrac{MN}{D}\bfI}_\Fro^2
=\sum_{n\in\calN}\sum_{n'\in\calN}\ip{\bfP_n}{\bfP_{n'}}_\Fro-\tfrac{M^2N^2}{D}
=\sum_{n\in\calN}\sum_{n'\neq n}\Tr(\bfP_n\bfP_{n'})
-\tfrac{MN(MN-D)}{D},
\end{equation*}
and achieves equality in this bound if and only if $\{\calU_n\}_{n\in\calN}$ is a TFF for $\bbH$.
At the same time, any such $\{\calU_n\}_{n\in\calN}$ also satisfies $\sum_{n\in\calN}\sum_{n'\neq n}\Tr(\bfP_n\bfP_{n'})
\leq N(N-1)\max_{n\neq n'}\Tr(\bfP_n\bfP_{n'})$,
and achieves equality in this bound if and only if it is \textit{equi-chordal},
namely when the \textit{(squared) chordal distance}
$\dist_{\rmc}^2(\calU_n,\calU_{n'}):=\frac12\norm{\bfP_n-\bfP_{n'}}^2=M-\Tr(\bfP_n\bfP_{n'})$ between any pair of subspaces is the same.
Combining these two inequalities gives that any such $\set{\calU_n}_{n\in\calN}$ satisfies
\begin{equation}
\label{eq.chordal Welch}
\tfrac{M(MN-D)}{D(N-1)}
\leq\max_{n\neq n'}\Tr(\bfP_n\bfP_{n'})
=M-\min_{n\neq n'}\dist_{\rmc}^2(\calU_n,\calU_{n'}),
\end{equation}
and achieves equality in this bound if and only if it is a TFF for $\bbH$ that is also equi-chordal, namely an ECTFF for $\bbH$.
When rewritten as \smash{$\min_{n\neq n'}\dist_{\rmc}^2(\calU_n,\calU_{n'})
\leq \tfrac{M(D-M)N}{D(N-1)}$},
namely as the \textit{simplex bound} of~\cite{ConwayHS96},
we see that an ECTFF $\set{\calU_n}_{n\in\calN}$ has the property that the minimum chordal distance between any pair of these subspaces is as large as possible.
In particular, with respect to the chordal distance, an ECTFF is an optimal packing in the \textit{Grassmannian space} that consists of all $M$-dimensional subspaces of $\bbH$.

Continuing, for any given sequence $\set{\calU_n}_{n\in\calN}$ of $M$-dimensional subspaces of $\bbH$,
we for each $n\in\calN$ let $\bfE_n$ be the synthesis operator of an orthonormal basis $\{\bfe_{n,m}\}_{m\in\calM}$ for $\calU_n$,
and so $\bfE_n^*\bfE_n^{}=\bfI$ and $\bfP_n^{}=\bfE_n^{}\bfE_n^*$.
Here, since
$\sum_{n\in\calN}\bfP_n
=\sum_{n\in\calN}\bfE_n^{}\bfE_n^*
=\sum_{n\in\calN}\sum_{m\in\calM}\bfe_{n,m}^{}\bfe_{n,m}^*$,
we have that $\set{\calU_n}_{n\in\calN}$ is a TFF for $\bbH$ if and only if the concatenation (union) $\{\bfe_{n,m}\}_{n\in\calN,m\in\calM}$ of these bases is a tight frame for $\bbH$.
Moreover,
\smash{$\Tr(\bfP_n\bfP_{n'})
=\Tr(\bfE_n^*\bfE_n^{}\bfE_{n'}^*\bfE_{n'}^{})
=\norm{\bfE_n^*\bfE_{n'}^{}}_\Fro^2
=\sum_{m=1}^M\sigma_{n,n',m}^2$}
where $\set{\sigma_{n,n',m}}_{m=1}^M$ are the singular values of the $\calM\times\calM$ \textit{cross-Gram} matrix $\bfE_n^*\bfE_{n'}^{}$,
arranged without loss of generality in decreasing order.
Here, since the induced $2$-norm of such a cross-Gram matrix satisfies
$\sigma_{n,n',1}=\norm{\bfE_n^*\bfE_{n'}^{}}_2\leq\norm{\bfE_n}_2\norm{\bfE_{n'}}_2=1$,
there exists an increasing sequence $\set{\theta_{n,n',m}}_{m=1}^M$ in $[0,\frac\pi2]$ such that $\sigma_{n,n',m}=\cos(\theta_{n,n',m})$.
These \textit{principal angles} determine the chordal distances between subspaces:
$\dist_{\rmc}^2(\calU_n,\calU_{n'})
=M-\Tr(\bfP_n\bfP_{n'})
=M-\sum_{m=1}^M\cos^2(\theta_{n,n',m})
=\sum_{m=1}^M\sin^2(\theta_{n,n',m})$.

An EITFF is a special type of ECTFF whose principal angles are constant.
To elaborate,
\smash{$\Tr(\bfP_n\bfP_{n'})
=\sum_{m=1}^M\cos^2(\theta_{n,n',m})
\leq M\cos^2(\theta_{n,n',1})
=M\norm{\bfE_n^*\bfE_{n'}^{}}_2^2$}
for any $n\neq n'$.
Moreover, $\calU_n$ and $\calU_{n'}$ achieve equality here if and only if they are \textit{isoclinic} in the sense that $\set{\theta_{n,n',m}}_{m=1}^{M}$ is constant over $m$.
This happens precisely when, for some $\sigma_{n,n'}\geq0$, we have
$\bfE_n^*\bfE_{n'}^{}\bfE_{n'}^*\bfE_n^{}=\sigma_{n,n'}^2\bfI$,
or equivalently that $\bfP_n\bfP_{n'}\bfP_n=\sigma_{n,n'}^2\bfP_n$.
When combined with~\eqref{eq.chordal Welch}, these facts imply
\begin{equation}
\label{eq.spectral Welch}
\tfrac{MN-D}{D(N-1)}
\leq\tfrac1M\max_{n\neq n'}\Tr(\bfP_n\bfP_{n'})
\leq\max_{n\neq n'}\norm{\bfE_n^*\bfE_{n'}^{}}_2^2
=1-\min_{n\neq n'}\dist_{\rms}^2(\calU_n,\calU_{n'}),
\end{equation}
where $\dist_{\rms}^2(\calU_n,\calU_{n'})
:=1-\norm{\bfE_n^*\bfE_{n'}^{}}_2^2
=\sin^2(\theta_{n,n',1})$ is the \textit{(squared) spectral distance} between $\calU_n$ and $\calU_{n'}$~\cite{DhillonHST08}.
Moreover, $\set{\calU_n}_{n\in\calN}$ achieves equality throughout~\eqref{eq.spectral Welch} if and only if it is an ECTFF for $\bbH$ where each pair of subspaces is isoclinic,
or equivalently, a TFF for $\bbH$ that is \textit{equi-isoclinic}~\cite{LemmensS73b} in the sense that $\theta_{n,n',m}$ is constant over all $n\neq n'$ and $m$,
namely when there is some $\sigma\geq0$ such that $\bfP_n\bfP_{n'}\bfP_n=\sigma^2\bfP_n$ for all $n\neq n'$.
In particular, every EITFF is an optimal packing in Grassmannian space with respect to both the chordal distance and the spectral distance.

In the special case where $M=1$,
we can let $\bfE_n=\bfphi_n$ be any unit vector in the line $\calU_n$,
both~\eqref{eq.chordal Welch} and \eqref{eq.spectral Welch} reduce to the Welch bound~\eqref{eq.Welch bound},
and $\set{\bfphi_n}_{n\in\calN}$ achieves equality in this bound if and only if it is a tight frame for $\bbH$ that is also equiangular, namely an ETF for $\bbH$.
An $\ETF(D,D)$ equates to $D$ orthonormal vectors.
Meanwhile, when $D<N$, any $\ETF(D,N)$ $\set{\bfphi_n}_{n\in\calN}$ has a Naimark complement $\set{\bfpsi_n}_{n\in\calN}$ which itself is tight and satisfies $\bfPsi^*\bfPsi=C\bfI-\bfPhi^*\bfPhi$,
and so normalizing these vectors yields an $\ETF(N-D,N)$.
In particular, an $\ETF(S,S+1)$ exists for any positive integer $S$,
being equivalent to a Naimark complement of an $\ETF(1,S+1)$,
namely to a sequence of $S+1$ unimodular scalars.
We refer to any $\ETF(S,S+1)$ as a \textit{regular $S$-simplex}.
In light of the Welch bound~\eqref{eq.Welch bound},
any $S+1$ linearly dependent unit vectors with coherence $\frac1S$ necessarily form a regular simplex for their span.

ECTFFs and EITFFs that consist of subspaces of dimension $M\geq 2$ have received some attention in the literature~\cite{LemmensS73b,Hoggar77,ConwayHS96,Zauner99,KutyniokPCL09,BojarovskaP15,King16,EtTaoui18,BlokhuisBE18}
but not nearly as much as ETFs.
EITFFs seem particularly tricky to construct.
One approach is to start with a given EITFF and take a Naimark complement of the tight frame formed by concatenating any orthonormal bases for the subspaces that comprise it.
Doing so converts an EITFF for a $D$-dimensional space that consists of $N$ subspaces of dimension $M$ into an EITFF for an $(MN-D)$-dimensional space that consists of $N$ subspaces of dimension $M$.
Another known method for constructing EITFFs harkens back to~\cite{LemmensS73b}:
if $\set{\bfdelta_m}_{m\in\calM}$ is any orthonormal basis for an $M$-dimensional space $\bbH$, and for each $m\in\calM$, \smash{$\set{\bfphi_n^{(m)}}_{n\in\calN}$} is any $\ETF(D,N)$ for $\bbK$,
then the subspaces
\begin{equation}
\label{eq.ETF tensor ONB}
\set{\calU_n}_{n\in\calN},\quad \calU_n:=\Span\set{\bfdelta_m\otimes\bfphi_n^{(m)}}_{m\in\calM}
\end{equation}
form an EITFF for the $MD$-dimensional space $\bbK\otimes\bbH$ that consists of $N$ subspaces of dimension $M$.
Indeed,
\smash{$\ip{\bfdelta_m\otimes\bfphi_n^{(m)}}{\bfdelta_{m'}\otimes\bfphi_{n'}^{(m')}}
=\ip{\bfdelta_m}{\bfdelta_{m'}}\ip{\bfphi_n^{(m)}}{\bfphi_{n'}^{(m')}}$}
for all $n,n'\in\calN$ and $m,m'\in\calM$,
implying that for each $n\in\calN$, \smash{$\set{\bfdelta_m\otimes\bfphi_n^{(m)}}_{m\in\calM}$} is an orthonormal basis for $\calU_n$,
and moreover that for any $n\neq n'$,
the corresponding cross-Gram matrix $\bfE_n^*\bfE_{n'}^{}$ is diagonal with diagonal entries of modulus \smash{$[\tfrac{N-D}{D(N-1)}]^{\frac12}$}.
As such, $\set{\calU_n}_{n\in\calN}$ achieves equality in~\eqref{eq.spectral Welch} where ``$D$" is $MD$.

Yet another method for constructing EITFFs is to replace each entry of the synthesis operator $\bfPhi$ of a complex $\ETF(D,N)$ with its $2\times 2$ representation as an operator from $\bbR^2\rightarrow\bbR^2$,
namely to apply the one-to-one ring homomorphism
$z=x+\rmi y\mapsto[\begin{smallmatrix}x&-y\\y& x\end{smallmatrix}]$ to every entry of $\bfPhi$;
the resulting pairs of columns form orthonormal bases for $N$ subspaces of $\bbR^{2D}$, each of dimension $M=2$, that form an EITFF for $\bbR^{2D}$~\cite{Hoggar77}.
Recently, it was shown that applying the related yet distinct mapping
$z=x+\rmi y\mapsto[\begin{smallmatrix}x&y\\y&-x\end{smallmatrix}]$
to the entries of a symmetric complex conference matrix of size $N$ yields a matrix which, when suitably scaled and added to an identity matrix,
yields the Gram matrix of an EITFF for $\bbR^N$ that consists of $N$ subspaces of dimension $M=2$~\cite{EtTaoui18}.
Such a conference matrix can be obtained from a real symmetric conference matrix of size $N+1$~\cite{BlokhuisBE18},
which itself equates to a real $\ETF(\frac12(N+1),N+1)$.
For example, the well-known real $\ETF(3,6)$ yields $5$ planes that form an EITFF for $\bbR^5$.
Notably, to date, \cite{EtTaoui18,BlokhuisBE18} give the only known method for constructing EITFFs in which the dimension of the subspaces does not divide the dimension of the space they span, and so are verifiably not of type~\eqref{eq.ETF tensor ONB}.

\subsection{Harmonic equiangular tight frames and difference sets}

A \textit{character} on a finite abelian group $\calG$ is a homomorphism $\gamma:\calG\to\bbT=\{z\in\bbC:\abs{z}=1\}$.
The \textit{(Pontryagin) dual} of $\calG$ is the set $\hat{\calG}$ of all characters of $\calG$,
and is itself a group under pointwise multiplication.
In the general setting, we shall denote the group operations on $\calG$ and $\hat{\calG}$ as addition and multiplication, respectively.
It is well known that since $\calG$ is finite, $\hat{\calG}$ is isomorphic to $\calG$,
and moreover that $\smash{\set{\gamma: \gamma\in\hat{\calG}}}$ is an equal-norm orthogonal basis for $\bbC^\calG$,
meaning its synthesis operator \smash{$\bfF:\bbC^{\hat{\calG}}\rightarrow\bbC^{\calG}$} is invertible with $\bfF^{-1}=\frac1G\bfF^*$ where $G$ is the order of $\calG$.
This operator is usually regarded as the \smash{$(\calG\times\hat{\calG})$}-indexed \textit{character table} of $\calG$ whose $(g,\gamma)$th entry is $\bfF(g,\gamma)=\gamma(g)$.
Its adjoint is the \textit{discrete Fourier transform} (DFT) on $\calG$,
$(\bfF^*\bfx)(\gamma)=\ip{\gamma}{\bfx}$.

Since $\bfF\bfF^*=G\bfI$,
the rows of $\bfF$ are equal-norm orthogonal.
Of course, any subset of these rows also has this property:
if $\calD$ is any nonempty $D$-element subset of $\calG$,
then letting $\bfPhi$ be the $(\calD\times\hat{\calG})$-index defined by \smash{$\bfPhi(d,\gamma)=\frac1{\sqrt{D}}\gamma(d)$},
we have $\bfPhi\bfPhi^*=\frac GD\bfI$.
Regarding the $\gamma$th column of $\bfPhi$ as the unit norm vector \smash{$\bfphi_\gamma=\frac1{\sqrt{D}}\gamma\in\bbC^\calD$},
we equivalently have that \smash{$\set{\bfphi_\gamma}_{\gamma\in\hat{\calG}}$}
is a tight frame for $\bbC^\calD$.
Such tight frames are dubbed \textit{harmonic frames},
and have a circulant Gram matrix with
$\ip{\bfphi_\gamma}{\bfphi_{\gamma'}}
=\tfrac1D\sum_{d\in\calD}(\gamma^{-1}\gamma')(d)
=\tfrac1D(\bfF^*\bfchi_\calD)(\gamma(\gamma')^{-1})$
where $\bfchi_\calD$ is the characteristic function of $\calD$.

The \textit{convolution} of $\bfx_1,\bfx_2\in\bbC^\calG$ is
$\bfx_1*\bfx_2\in\bbC^\calG$,
\smash{$(\bfx_1*\bfx_2)(g):=\sum_{g'\in\calG}\bfx_1(g')\bfx_2(g-g')$},
and satisfies
$[\bfF^*(\bfx_1*\bfx_2)](\gamma)
=(\bfF^*\bfx_1)(\gamma)(\bfF^*\bfx_2)(\gamma)$ for all $\gamma\in\hat{\calG}$.
Meanwhile, the Fourier transform of the  \textit{involution} $\tilde{\bfx}(g):=[\bfx(-g)]^*$ of $\bfx\in\bbC^\calG$ is the pointwise complex conjugate of $\bfF^*\tilde{\bfx}$.
In particular, $[\bfF^*(\bfchi_\calD*\tilde{\bfchi}_\calD)](\gamma)=\abs{(\bfF^*\bfchi_\calD)(\gamma)}^2$ for all $\gamma\in\hat{\calG}$
where, for any $g\in\calG$,
\begin{equation*}
\smash{(\bfchi_\calD*\tilde{\bfchi}_\calD)(g)
=\sum_{g'\in\calD}\bfchi_\calD(g')\bfchi_\calD(g'-g)
=\#[D\cap(g+D)]
=\#\set{(d,d')\in\calD\times\calD: g=d-d'}}
\end{equation*}
is the number of times $g$ can be written as a difference of members of $\calD$.

Now let $\calH$ be any subgroup of $\calG$ of order $H$.
The \textit{Poisson summation formula} states that $\bfF^*\bfchi_\calH=H\bfchi_{\calH^\perp}$ where $\calH^\perp:=\set{\gamma\in\hat{\calG}: \gamma(h)=1,\ \forall\ h\in\calH}$ is the \textit{annihilator} of $\calH$.
It is well known that $\calH^\perp$ is a subgroup of $\hat{\calG}$,
and that $\calH^\perp$ is isomorphic to the dual of $\calG/\calH$,
via the identification of $\gamma\in\calH^\perp$ with the mapping
$\overline{g}\mapsto\gamma(g)$;
here and throughout,
we denote the cosets $g+\calH$ and $\gamma\calH^\perp$ as simply $\overline{g}$ and $\overline{\gamma}$, respectively.
In particular, $\calH^\perp$ has order $\frac GH$.
A subset $\calD$ of $\calG$ is an \textit{$\calH$-relative difference set} (RDS) of $\calG$ if every $g\notin\calH$ can be written as a difference of members of $\calD$ the same number of times, while no nonzero member of the ``forbidden subgroup" $\calH$ can be written in this way,
namely if and only if there exists a scalar $\Lambda$ such that
$\bfchi_\calD*\tilde{\bfchi}_\calD=D\bfdelta_0+\Lambda(\bfone-\bfchi_\calH)$,
where $\bfone=\bfchi_\calG$ is the all-ones vector.
Taking Fourier transforms, this equates to having
\begin{equation*}
\abs{\bfF^*\bfchi_\calD}^2
=\bfF^*[D\bfdelta_0+\Lambda(\bfone-\bfchi_\calH)]
=D\bfone+\Lambda(G\bfdelta_1-H\bfchi_{\calH^\perp}).
\end{equation*}
In particular, we necessarily have
$D^2=\abs{(\bfF^*\bfchi_\calD)(1)}^2=D+\Lambda(G-H)$,
that is, $\Lambda=\frac{D(D-1)}{G-H}$;
alternatively, this follows from the fact that each of the $G-H$ members of $\calH^\rmc$ appears the same number of times in the \textit{difference table} of $\calD$,
namely the $(\calD\times\calD)$-indexed matrix whose $(d,d')$th entry is $d-d'$.
As such, under this assumption,
we have that $\calD$ is an $\calH$-RDS if and only if
\begin{equation}
\label{eq.relative Fourier}
\abs{(\bfF^*\bfchi_\calD)(\gamma)}^2
=\left\{\begin{array}{cl}
D-\Lambda H,&\ \gamma\in\calH^\perp,\ \gamma\neq1,\ \smash{\Lambda=\tfrac{D(D-1)}{G-H}},\\
D,          &\ \gamma\notin\calH^\perp,
\end{array}\right.
\end{equation}
namely if and only if the corresponding harmonic tight frame
$\set{\bfphi_\gamma}_{\gamma\in\hat{\calG}}$ satisfies
\begin{equation}
\label{eq.relative harmonic}
\abs{\ip{\bfphi_\gamma}{\bfphi_{\gamma'}}}
=\tfrac1D\left\{\begin{array}{cl}
\sqrt{D-\Lambda H},&\ \overline{\gamma}=\overline{\gamma}',\ \gamma\neq\gamma',\ \smash{\Lambda=\tfrac{D(D-1)}{G-H}},\\
\sqrt{D},          &\ \overline{\gamma}\neq\overline{\gamma}'.
\end{array}\right.
\end{equation}
Following the literature~\cite{Pott95},
we denote such a set $\calD$ as an $\RDS(\frac{G}{H},H,D,\Lambda)$.

In the particular case where $\calH=\set{0}$,
a relative difference set is simply called a \textit{difference set}.
Here, every $g\neq0$ can be written as a difference of members of $\calD$ in exactly $\Lambda=\frac{D(D-1)}{G-1}$ ways.
Moreover, since \smash{$\set{0}^\perp=\hat{\cal{G}}$} and
\begin{equation*}
D-\Lambda H
=D-\tfrac{D(D-1)}{G-1}
=\tfrac{D(G-D)}{G-1}
=\tfrac{D^2}{S^2},
\quad
S:=[\tfrac{D(G-1)}{G-D}]^{\frac12},
\end{equation*}
we see that~\eqref{eq.relative Fourier} and~\eqref{eq.relative harmonic} reduce to having
\begin{equation}
\label{eq.diff set Fourier}
\abs{(\bfF^*\bfchi_\calD)(\gamma)}=\tfrac{D}{S},\ \forall\ \gamma\neq1,
\quad
\text{i.e.,}
\quad
\abs{\ip{\bfphi_\gamma}{\bfphi_{\gamma'}}}=\tfrac1S,\ \forall\ \gamma\neq\gamma'.
\end{equation}
Since $\frac1S$ also happens to be the Welch bound for $G$ vectors in a $D$-dimensional space,
we obtain that $\calD$ is a difference set for $\calG$ if and only if $\set{\bfphi_\gamma}_{\gamma\in\hat{\calG}}$ is an ETF for $\bbC^\calD$.
When $\calD$ is a difference set,
the quantity \smash{$D-\Lambda=\frac{D^2}{S^2}$} is known as the \textit{order} of $\calD$.
The complement $\calD^\rmc$ of any difference set $\calD$ for $\calG$ is another difference set for $\calG$ of the same order,
and the resulting harmonic ETFs are Naimark complementary.
It is also straightforward to verify that shifting or applying a group automorphism of $\calG$ to a difference set $\calD$ of $\calG$ yields another difference set for $\calG$.

When $\calD$ is an $\calH$-RDS for $\calG$,
then for any subgroup $\calK$ of $\calH$ of order $K$,
applying the quotient map $g\mapsto g+\calK$ to $\calD$ produces an $(\calH/\calK)$-RDS for $\calG/\calK$,
transforming an \smash{$\RDS(\frac GH,H,D,\Lambda)$} into an $\RDS(\frac GH,\frac{H}{K},D,K\Lambda)$.
Indeed, $(d+\calK)-(d'+\calK)=g+\calK$ if and only if $d-d'=g+k$ for some $k\in\calK$.
For $g\in\calK$, this occurs exactly $D$ times, namely when $k=-g$, $d\in\calD$ is arbitrary and $d'=-d$.
Meanwhile, for $g\notin\calH$, this occurs exactly $K\Lambda$ times, namely $\Lambda$ times for each $k\in\calK$.
Finally, for $g\in\calH\backslash\calK$, no such $(d,d')$ exist, regardless of the value of $k$.
In particular, quotienting an $\calH$-RDS $\calD$ by $\calH$ produces a $D$-element difference set $\overline{\calD}=\set{\overline{g}: g\in\calD}$ for $\calG/\calH$,
transforming an \smash{$\RDS(\frac GH,H,D,\Lambda)$} into an \smash{$\RDS(\frac GH,1,D,H\Lambda)$}.

%%%%%%%%%%%%%%%%%%%%%%%%%%%%%%%%%%%%%%%%%%%%%%%%%%%%%%%%%%%%%%%%
\section{Equi-isoclinic subspaces arising from harmonic ETFs}
%%%%%%%%%%%%%%%%%%%%%%%%%%%%%%%%%%%%%%%%%%%%%%%%%%%%%%%%%%%%%%%%

\subsection{Fine difference sets}

It was recently shown that certain harmonic ETFs are comprised of regular simplices~\cite{FickusJKM18}.
To see this from basic principles,
let $\calD$ be a $D$-element difference set in an abelian group $\calG$ of order $G$,
let \smash{$\set{\bfphi_\gamma}_{\gamma\in\hat{\calG}}$}, \smash{$\bfphi_\gamma(d):=\frac1{\sqrt{D}}\gamma(d)$} be the corresponding harmonic ETF for $\bbC^\calD$, and let $S=[\frac{D(G-1)}{G-D}]^{\frac12}$ be the reciprocal of the corresponding Welch bound.
If $S$ is an integer,
then any subsequence \smash{$\set{\bfphi_\gamma}_{\gamma\in\calS}$}
of \smash{$\set{\bfphi_\gamma}_{\gamma\in\hat{\calG}}$} that consists of $S+1$ linearly dependent vectors is a regular simplex for its span:
being linearly dependent, \smash{$\set{\bfphi_\gamma}_{\gamma\in\calS}$} is contained in some $S$-dimensional subspace $\calU$ of $\bbC^\calD$;
at the same time, the coherence of \smash{$\set{\bfphi_\gamma}_{\gamma\in\calS}$} is $\frac1S$,
meaning it achieves the Welch bound for any $S+1$ vectors in $\calU$,
and thus is an $\ETF(S,S+1)$ for $\calU$.
Moreover, if $\calD$ is disjoint from a subgroup $\calH$ of $\calG$,
then the vectors indexed by any coset of its annihilator $\calH^\perp$ are trivially dependent, since they sum to zero:
for any $\gamma\in\hat{\calG}$ and $d\in\calD$,
the Poisson summation formula gives
\begin{equation}
\label{eq.simplex sum}
\sum_{\gamma'\in\gamma\calH^\perp}\bfphi_{\gamma'}(d)
=\sum_{\gamma'\in\calH^\perp}\tfrac1{\sqrt{D}}(\gamma\gamma')(d)
=\tfrac1{\sqrt{D}}\gamma(d)(\bfF\bfchi_{\calH^\perp})(d)
=\tfrac1{\sqrt{D}}\gamma(d)\tfrac{G}{H}\bfchi_\calH(d)
=0.
\end{equation}
Altogether,
we see that every subsequence \smash{$\set{\bfphi_{\gamma'}}_{\gamma'\in\gamma\calH^\perp}$}
of the ETF \smash{$\set{\bfphi_\gamma}_{\gamma\in\hat{\calG}}$}
is a regular simplex provided the underlying difference set $\calD$ is disjoint from a subgroup $\calH$ of order \smash{$H=\frac{G}{S+1}$}, that is, whose annihilator $\calH^\perp$ has order $S+1$.
In~\cite{FickusJKM18}, it was further shown that several known families of difference sets have this property.
To make these concepts easier to discuss moving forward, we now give this property a name:

\begin{definition}
\label{def.fine}
A $D$-element difference set $\calD$ for an abelian group $\calG$ of order $G$ is \textit{fine} if there exists a subgroup $\calH$ of $\calG$ that is disjoint from $\calD$ and is of order
$H=\frac{G}{S+1}$ where $S=[\frac{D(G-1)}{G-D}]^{\frac12}$.
When this occurs with a specific $\calH$,
we say $\calD$ is \textit{$\calH$-fine}.
\end{definition}

Below we show that such difference sets $\calD$ are ``fine" in the sense that they can ``pass through a fine sieve," that is, are disjoint from a subgroup $\calH$ of $\calG$ that is as large as any such subgroup can be.
We further show that when a difference set is fine, every nonidentity coset of the corresponding subgroup $\calH$ intersects $\calD$ in the same number of points.
Here, it helps to introduce the following notation:
if $\calH$ is any subgroup of a finite abelian group $\calG$, and $\calD$ is any subset of $\calG$, let
\begin{equation}
\label{eq.def of D_g}
\calD_g:=\calH\cap(\calD-g),\quad \forall\ g\in\calG.
\end{equation}

\begin{example}
\label{ex.8x15 fine}
As a simple example of a complement of a Singer difference set,
let $\calG$ be the cyclic group $\bbZ_{15}$ and let $\calD$ be $\set{6,11,7,12,13,3,9,14}$;
the rationale behind this unusual ordering of the elements of $\calD$ will eventually become apparent.
Computing the difference table of $\calD$,
we see that each of the $14$ nonzero elements of $\calG$ can be written as a difference of members of $\calD$ in exactly $\Lambda=\frac{D(D-1)}{G-1}=\frac{8(7)}{14}=4$ ways:
\begin{equation*}
\begin{array}{c|cccccccc}
- & 6&11& 7&12&13& 3& 9&14\\\hline
 6& 0&10&14& 9& 8& 3&12& 7\\
11& 5& 0& 4&14&13& 8& 2&12\\
 7& 1&11& 0&10& 9& 4&13& 8\\
12& 6& 1& 5& 0&14& 9& 3&13\\
13& 7& 2& 6& 1& 0&10& 4&14\\
 3&12& 7&11& 6& 5& 0& 9& 4\\
 9& 3&13& 2&12&11& 6& 0&10\\
14& 8& 3& 7& 2& 1&11& 5& 0\\
\end{array}.
\end{equation*}
Thus, $\calD$ is a difference set for $\calG$.
Here, \smash{$S=[\frac{D(G-1)}{G-D}]^{\frac12}=[\frac{8(14)}{7}]^{\frac12}=4$} is an integer.
Moreover, $S+1=5$ divides $G=15$, and so the cyclic group $\calG$ contains a unique subgroup of order $H=\frac{G}{S+1}=3$, namely $\calH=\set{0,5,10}$.
Being disjoint from $\calH$, we see that $\calD$ is fine in the sense of Definition~\ref{def.fine}.
Moreover, since the cosets of $\calH$ partition $\calG$,
we can partition $\calD$ into its intersections with these cosets,
namely according to congruency modulo $5$:
\begin{equation*}
\calD
=\set{6,11,7,12,13,3,9,14}
=\emptyset\sqcup\set{6,11}\sqcup\set{7,12}\sqcup\set{13,3}\sqcup\set{9,14}.
\end{equation*}
Here, we note that every nontrivial member of this partition has cardinality $\frac{D}{S}=\frac{8}{4}=2$.
Below, we show this is not a coincidence, showing that this property is equivalent to $\calD$ being fine, in general.

For reasons that will eventually become apparent,
we elect to express this partition in terms of subsets of $\calH$ itself,
as opposed to subsets of cosets of $\calH$.
In particular, in the notation of~\eqref{eq.def of D_g}, we have $\calD_0=\calD_5=\calD_{15}=\emptyset$,
$\calD_1=\calD_2=\calD_8=\calD_4=\set{5,10}$,
$\calD_6=\calD_7=\calD_{13}=\calD_9=\set{0,5}$, and
$\calD_{11}=\calD_{12}=\calD_3=\calD_{14}=\set{0,10}$.
Under this notation,
the portion of $\calD$ that lies in the $g$th coset of $\calH$ is
$\calD\cap(\calH+g)=g+[\calH\cap(\calD-g)]=g+\calD_g$.
\end{example}

As evidenced by this example,
the set $g+\calD_g$ only depends on the coset of $\calH$ to which $g$ belongs:
when $\overline{g}=g+\calH$ and $\overline{g}'=g'+\calH$ are equal, we have $g+\calD_g=\calD\cap(g+\calH)=\calD\cap(g'+\calH)=g'+\calD_{g'}$.
On the other hand, $\calD_g$ itself depends on one's choice of coset representative:
when $\overline{g}=\overline{g}'$, we have $\calD_g=(g'-g)+\calD_{g'}$, which is not equal to $\calD_{g'}$ in general.

\begin{theorem}
\label{thm.fine}
If $\calD$ is a difference set for an abelian group $\calG$ of order $G$ and $\calH$ is any subgroup of $\calG$ of order $H$ that is disjoint from $\calD$,
then $H\leq\frac{G}{S+1}$ where $S=[\frac{D(G-1)}{G-D}]^{\frac12}$.
Moreover, the following are equivalent:
\begin{enumerate}
\renewcommand{\labelenumi}{(\roman{enumi})}
\item $H=\frac{G}{S+1}$, that is, $\calD$ is $\calH$-fine;
\item $(\bfF^*\chi_\calD)(\gamma)=-\frac{D}{S}$ for all $\gamma\in\calH^\perp$, $\gamma\neq1$;\smallskip
\item $\#(\calD_g)=\tfrac{D}{S}$ for all $g\not\in\calH$, where $\calD_g:=\calH\cap(\calD-g)$, i.e.,\ $\bfchi_\calD*\bfchi_\calH=\tfrac{D}{S}\bfchi_{\calH^\rmc}$.
\end{enumerate}
As a consequence, if $\calD$ is fine then $S$ is necessarily an integer that divides $D$,
implying the order $D-\Lambda=\frac{D^2}{S^2}$ of $\calD$ is necessarily a perfect square.
\end{theorem}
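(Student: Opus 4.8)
The plan is to push everything through the Fourier transform $\bfF^*$ of the indicator $\bfchi_\calD$, abbreviating $a_\gamma:=(\bfF^*\bfchi_\calD)(\gamma)$. From~\eqref{eq.diff set Fourier} we already know $\abs{a_\gamma}=\frac DS$ for every $\gamma\neq1$, while $a_1=\sum_{d\in\calD}1=D$. The one genuinely new ingredient I would extract from the disjointness hypothesis is the identity
\begin{equation*}
\sum_{\gamma\in\calH^\perp}a_\gamma=0 .
\end{equation*}
This follows either by swapping summation order, $\sum_{\gamma\in\calH^\perp}a_\gamma=\sum_{d\in\calD}\sum_{\gamma\in\calH^\perp}\overline{\gamma(d)}=\frac GH\sum_{d\in\calD}\bfchi_\calH(d)$, whose right side vanishes because $\calD\cap\calH=\emptyset$; or, more slickly, from Poisson summation and $\bfF\bfF^*=G\bfI$ via $\sum_{\gamma\in\calH^\perp}a_\gamma=\ip{\bfchi_{\calH^\perp}}{\bfF^*\bfchi_\calD}=\frac1H\ip{\bfF^*\bfchi_\calH}{\bfF^*\bfchi_\calD}=\frac GH\ip{\bfchi_\calH}{\bfchi_\calD}=0$.

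Given this identity, the bound is immediate. Since $1\in\calH^\perp$, isolating the $\gamma=1$ term gives $\sum_{\gamma\in\calH^\perp,\,\gamma\neq1}a_\gamma=-D$, a sum of exactly $\frac GH-1$ complex numbers each of modulus $\frac DS$. The triangle inequality then yields $D\leq(\frac GH-1)\frac DS$, which rearranges to $S+1\leq\frac GH$, i.e.\ $H\leq\frac G{S+1}$. This bound together with its equality analysis is the conceptual heart of the argument and the step I expect to be least automatic; everything afterward is bookkeeping.

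For the equivalences I would argue (i)$\Leftrightarrow$(ii) directly from the displayed identity, then (iii)$\Leftrightarrow$(i)$\wedge$(ii) by transforming (iii). For (i)$\Rightarrow$(ii): if $H=\frac G{S+1}$ then $\sum_{\gamma\neq1}a_\gamma=-D$ has exactly $S$ terms of modulus $\frac DS$, so the triangle inequality is an equality; as the common modulus is positive and the sum $-D$ is a negative real, the equality case forces every such $a_\gamma$ to equal $-\frac DS$, which is (ii). For (ii)$\Rightarrow$(i): substituting $a_\gamma=-\frac DS$ into the identity gives $D-(\frac GH-1)\frac DS=0$, i.e.\ $\frac GH=S+1$, which is (i). For (iii): I would first note from the convolution definition that $(\bfchi_\calD*\bfchi_\calH)(g)=\#[\calD\cap(g+\calH)]=\#(\calD_g)$, and that disjointness makes this vanish on $\calH$, so the identity $\bfchi_\calD*\bfchi_\calH=\frac DS\bfchi_{\calH^\rmc}$ is exactly (iii). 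Applying $\bfF^*$, the convolution theorem and Poisson summation turn the left side into $a_\gamma H\bfchi_{\calH^\perp}(\gamma)$ and, using $\bfchi_{\calH^\rmc}=\bfone-\bfchi_\calH$, turn the right side into $\frac DS(G\bfdelta_1-H\bfchi_{\calH^\perp})$. Since $\bfF^*$ is injective, (iii) is equivalent to matching these frequency by frequency: off $\calH^\perp$ both vanish automatically; the $\gamma=1$ coordinate gives $HD=\frac DS(G-H)$, which is precisely (i); and each nontrivial $\gamma\in\calH^\perp$ gives $Ha_\gamma=-\frac{DH}S$, i.e.\ (ii). Thus (iii)$\Leftrightarrow$(i)$\wedge$(ii), and combined with (i)$\Leftrightarrow$(ii) all three statements are equivalent.

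Finally, for the consequence: if $\calD$ is $\calH$-fine then $S+1=\frac GH$ is an integer by Lagrange's theorem, so $S\in\bbZ$; and by (iii) the quantity $\frac DS=\#(\calD_g)$ is a cardinality, so $S\mid D$. Hence the order $D-\Lambda=\frac{D^2}{S^2}=(\frac DS)^2$ is a perfect square.
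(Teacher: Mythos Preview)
Your proof is correct and follows essentially the same approach as the paper: both derive the key identity $\sum_{\gamma\in\calH^\perp}(\bfF^*\bfchi_\calD)(\gamma)=0$ from disjointness via Poisson summation, then apply the triangle inequality to the $\frac GH-1$ terms of modulus $\frac DS$ to obtain the bound and the equivalence (i)$\Leftrightarrow$(ii). Your handling of (iii) is a slight organizational variant---you show (iii)$\Leftrightarrow$(i)$\wedge$(ii) by Fourier-transforming the convolution identity in both directions, whereas the paper proves (ii)$\Rightarrow$(iii) by the same Fourier computation and then closes the cycle with a direct counting argument (iii)$\Rightarrow$(i)---but the substance is the same.
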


\begin{proof}
Recall that since $\calD$ is a difference set for $\calG$,
the DFT of its characteristic function satisfies
$(\bfF^*\bfchi_\calD)(1)=D$ and $\abs{(\bfF^*\bfchi_\calD)(\gamma)}=\frac DS$ for all $\gamma\neq1$.
The fact that $\calH$ is disjoint from $\calD$ along with the Poisson summation formula implies
\begin{equation*}
0
=\ip{\chi_\calH}{\chi_\calD}
=\tfrac{1}{G}\ip{\bfF^*\chi_\calH}{\bfF^*\chi_\calD}
=\tfrac{H}{G}\ip{\chi_{\calH^\perp}}{\bfF^*\chi_\calD}
=\tfrac{H}{G}\Bigparen{D+\sum_{\gamma\in\calH^\perp\backslash\set{1}}(\bfF^*\chi_\calD)(\gamma)},
\end{equation*}
where $\calH^\perp$ is the $\frac GH$-element annihilator of $\calH$.
Multiplying by $\frac{GS}{HD}$ and rearranging then gives
\begin{equation}
\label{eq.pf of fine 1}
S=\sum_{\gamma\in\calH^\perp\backslash\set{1}}[-\tfrac{S}{D}(\bfF^*\chi_\calD)(\gamma)],
\end{equation}
namely an expression for $S$ as a sum of $\frac GH-1$ unimodular numbers.
In particular, applying the triangle inequality to \eqref{eq.pf of fine 1} immediately gives $S\leq\tfrac{G}{H}-1$,
namely the claim that $H\leq\frac{G}{S+1}$.

(i $\Leftrightarrow$ ii)
If \smash{$H=\frac{G}{S+1}$},
\eqref{eq.pf of fine 1} expresses $S$ as a sum of $S$ unimodular numbers,
implying each of these numbers is $1$, that is,
$-\tfrac{S}{D}(\bfF^*\chi_\calD)(\gamma)=1$ for all $\gamma\in\calH^\perp$, $\gamma\neq1$, namely (ii).
Conversely, if (ii) holds, then $-\tfrac{S}{D}(\bfF^*\chi_\calD)(\gamma)=1$ for all $\gamma\in\calH^\perp$, $\gamma\neq1$ and so \eqref{eq.pf of fine 1} gives $S=\frac{G}{H}-1$, namely (i).

(ii $\Rightarrow$ iii)
Since $(\bfF^*\bfchi_\calD)(1)=D$, (ii) implies that the pointwise product
of $\bfF^*\bfchi_\calD$ and $\bfchi_{\calH^\perp}$ is
$(\bfF^*\bfchi_\calD)\bfchi_{\calH^\perp}=\frac{D(S+1)}{S}\bfdelta_1-\frac DS\bfchi_{\calH^\perp}$.
Since (ii) implies (i), we can further simplify this product as
$(\bfF^*\bfchi_\calD)\bfchi_{\calH^\perp}
=\frac{DG}{SH}\bfdelta_1-\frac DS\bfchi_{\calH^\perp}
=\frac{D}{S}(\frac{G}{H}\bfdelta_1-\bfchi_{\calH^\perp})$.
The Poisson summation formula then gives
\begin{equation*}
\bfF^*(\bfchi_\calD*\bfchi_\calH)
=H(\bfF^*\bfchi_\calD)\bfchi_{\calH^\perp}
=\tfrac{D}{S}(G\bfdelta_1-H\bfchi_{\calH^\perp})
=\tfrac{D}{S}\bfF^*(\bfone-\bfchi_\calH)
=\tfrac{D}{S}\bfF^*\bfchi_{\calH^\rmc},
\end{equation*}
namely that $\bfchi_\calD*\bfchi_\calH=\tfrac{D}{S}\bfchi_{\calH^\rmc}$.
Since
\begin{equation*}
(\bfchi_\calD*\bfchi_\calH)(g)
=\sum_{g'\in\calG}\bfchi_\calD(g')\bfchi_\calH(g-g')
=\#[\calD\cap(g+\calH)]
=\#[\calH\cap(\calD-g)]
=\#(\calD_g)
\end{equation*}
for any $g\in\calG$, this equates to having $\#(\calD_g)=\frac{D}{S}$ for all $g\notin\calH$.

(iii $\Rightarrow$ i)
The cosets of $\calH$ partition $\calG$,
and so $\calD$ can be partitioned as $\calD=\sqcup_{g\in\calG/\calH}(g+\calD_g)$.
Since $\calD$ is disjoint from $\calH$, $\calD_g=\emptyset$ for the unique coset representative $g$ that lies in $\calH$.
Combining these facts with (iii) gives
$D=\#(\calD)=\sum_{g\in\calG/\calH}\#(\calD_g)=0+(\frac{G}{H}-1)\frac{D}{S}$, namely (i).

For the final conclusions, we now assume $\calD$ is fine, meaning there exists some subgroup $\calH$ of $\calG$ that is disjoint from $\calD$ and satisfies (i)--(iii).
Here, (i) implies $S=\frac{G}{H}-1$ is necessarily an integer,
and (iii) then gives that $S$ divides $D$.
Thus, the order \smash{$D-\Lambda=\frac{D^2}{S^2}$} of $\calD$ is a perfect square.
\end{proof}

For the interested reader, some alternative proofs of these facts are given in Appendix~A.

\subsection{A new representation of harmonic ETFs arising from fine difference sets}

As summarized in Theorem~7.5 of~\cite{FickusJKM18},
several types of difference sets are known to be fine,
including McFarland difference sets,
the complements of twin prime power difference sets,
and the appropriately shifted complements of ``half" of all Singer difference sets.
Moreover, by Theorem~6.2 of~\cite{FickusJKM18}, every ETF that is comprised of regular simplices---including every harmonic ETF arising from a fine difference set---gives rise to an ECTFF.
Below, we show that some of these ECTFFs are EITFFs while others are not.
To do this, it helps to introduce some more notation.

As before, letting \smash{$\set{\bfphi_\gamma}_{\gamma\in\hat{\calG}}$} be the harmonic ETF arising from a difference set $\calD$ that is fine with respect to $\calH$,
we have that for any $\gamma\in\hat{\calG}$,
the subsequence \smash{$\set{\bfphi_{\gamma'}}_{\gamma'\in\gamma\calH^\perp}$} indexed by the $\gamma$th coset of $\calH^\perp$ is a regular simplex for its span.
Here, to facilitate our work below,
we elect to instead index the vectors in every such regular simplex by a common set, namely $\calH^\perp$.
To be precise,
for any $\gamma\in\hat{\calG}$,
let $\bfPhi_\gamma$ be the synthesis operator of \smash{$\set{\bfphi_{\gamma\gamma'}}_{\gamma'\in\calH^\perp}$},
that is,
\begin{equation}
\label{eq.def of Phi_gamma}
\bfPhi_\gamma\in\bbC^{\calD\times\calH^\perp},
\quad
\bfPhi_\gamma(d,\gamma')
:=\bfphi_{\gamma\gamma'}(d)
=\tfrac1{\sqrt{D}}\gamma(d)\gamma'(d).
\end{equation}
This benefit comes at a small price:
though both \smash{$\set{\bfphi_{\gamma'}}_{\gamma'\in\gamma\calH^\perp}$} and \smash{$\set{\bfphi_{\gamma\gamma'}}_{\gamma'\in\calH^\perp}$} depend on $\gamma$,
the former only depends on the coset $\overline{\gamma}=\gamma\calH^\perp$ to which $\gamma$ belongs,
whereas $\bfPhi_\gamma$ is representative dependent.
That said, when $\overline{\gamma}=\overline{\gamma}'$,
we have \smash{$\bfPhi_\gamma=\bfPhi_{\gamma'}\bfT^{\gamma(\gamma')^{-1}}$} where,
for any $\gamma\in\calH^\perp$,
$\bfT^\gamma$ is the ``translation by $\gamma$ operator over $\calH^\perp$",
namely the $(\calH^\perp\times\calH^\perp)$-indexed permutation matrix defined by $\bfT^\gamma(\gamma_1,\gamma_2)=1$ if and only if $\gamma_1^{}\gamma_2^{-1}=\gamma$.
As such, the column space of $\bfPhi_\gamma$ is independent of coset representative.
In particular, the following notation for these subspaces is well defined:
\begin{equation}
\label{eq.def of U_gamma}
\smash{\set{\calU_{\overline{\gamma}}}_{\overline{\gamma}\in\hat{\calG}/\calH^\perp},
\quad
\calU_{\overline{\gamma}}
:=\Span\set{\bfphi_{\gamma\gamma'}}_{\gamma'\in\calH^\perp}
=\rmC(\bfPhi_\gamma).}
\end{equation}

As mentioned above, the results of~\cite{FickusJKM18} imply that if $\calD$ is $\calH$-fine  then~\eqref{eq.def of U_gamma} is an ECTFF for $\bbC^\calD$.
Below, we show that this ECTFF is an EITFF for $\bbC^\calD$ if and only if $\calD_g=\calH\cap(\calD-g)$ is a difference set for $\calH$ for every $g\in\calG$.
This is nontrivial since the techniques of~\cite{FickusJKM18} do not easily generalize to this harder problem.
There, the key idea is that since $\set{\bfphi_{\gamma\gamma'}}_{\gamma'\in\calH^\perp}$ is an $\ETF(S,S+1)$ for $\calU_{\overline{\gamma}}$ where \smash{$S=[\frac{D(G-1)}{G-D}]^{\frac12}$},
the orthogonal projection operator onto $\calU_{\overline{\gamma}}$ can be written as
\smash{$\bfP_{\overline{\gamma}}=\frac{S}{S+1}\bfPhi_\gamma^{}\bfPhi_\gamma^*$}.
Here, since \smash{$\set{\bfphi_\gamma}_{\gamma\in\hat{\calG}}$} is an ETF for $\bbC^\calD$, we also have $\abs{\ip{\bfphi_{\gamma\gamma_1}}{\bfphi_{\gamma\gamma_2}}}=\frac1S$ for any $\gamma_1,\gamma_2\in\calH^\perp$ provided $\overline{\gamma}\neq\overline{\gamma}'$.
Together, these facts imply that for any $\overline{\gamma}\neq\overline{\gamma}'$,
\begin{equation}
\label{eq.fine is ECTFF}
\Tr(\bfP_{\overline{\gamma}}\bfP_{\overline{\gamma}'})
=\tfrac{S^2}{(S+1)^2}\norm{\bfPhi_\gamma^*\bfPhi_{\gamma'}^{}}_\Fro^2
=\tfrac{S^2}{(S+1)^2}\sum_{\gamma_1\in\calH^\perp}\sum_{\gamma_2\in\calH^\perp}
\abs{\ip{\bfphi_{\gamma\gamma_1}}{\bfphi_{\gamma\gamma_2}}}^2
=1.
\end{equation}
This means these $N=H$ subspaces of $\bbC^\calD$ of dimension $M=S$ achieve equality in~\eqref{eq.chordal Welch} and so form an ECTFF for $\bbC^\calD$:
solving for $G$ in \smash{$S^2=\frac{G(D-1)}{G-D}$} gives
\begin{equation}
\label{eq.parameters relationships}
G
=\tfrac{(S^2-1)D}{S^2-D},
\ \text{ i.e., }\
H
=\tfrac{G}{S+1}
=\tfrac{(S-1)D}{S^2-D},
\ \text{ i.e., }\
\tfrac{S(SH-D)}{D(H-1)}=1.
\end{equation}
One could conceivably continue this approach to characterize when~\eqref{eq.def of U_gamma} is an EITFF for $\bbC^\calD$:
having \smash{$\bfP_{\overline{\gamma}}=\frac{S}{S+1}\bfPhi_\gamma^{}\bfPhi_\gamma^*$}, the goal is to characterize when there exists $\sigma^2$ such that $\bfP_{\overline{\gamma}}\bfP_{\overline{\gamma}'}\bfP_{\overline{\gamma}}
=\sigma^2\bfP_{\overline{\gamma}}$ for all $\overline{\gamma}\neq\overline{\gamma}'$.
We did not pursue this approach, and it seems more complicated than our alternative.

We instead construct orthonormal bases for \eqref{eq.def of U_gamma} that permit the singular values of their cross-Gram matrices to be computed explicitly.
To be precise, for any $\gamma\in\hat{\calG}$,
we obtain an isometry $\bfE_\gamma$ so that $\bfPhi_\gamma=\bfE_\gamma\bfPsi$ where $\bfPsi$ is the synthesis operator of a harmonic regular $S$-simplex that naturally arises in this context:
\begin{equation}
\label{eq.def of Psi}
\bfPsi\in\bbC^{(\calG/\calH)\backslash\{\overline{0}\}\times\calH^\perp},
\quad
\bfPsi(\overline{g},\gamma)=\tfrac1{\sqrt{S}}\gamma(g).
\end{equation}
This matrix $\bfPsi$ is well-defined: if $\overline{g}=\overline{g}'$ then $g-g'\in\calH$ implying $\gamma(g)=\gamma(g')\gamma(g-g')=\gamma(g')$ for all $\gamma\in\calH^\perp$.
Moreover, the columns $\set{\bfpsi_\gamma}_{\gamma\in\calH^\perp}$ of $\bfPsi$ form a regular $S$-simplex: if $\gamma\neq\gamma'$,
\begin{equation*}
\ip{\bfpsi_\gamma}{\bfpsi_{\gamma'}}
=\tfrac1S\sum_{\overline{g}\in\calG/\calH\backslash\set{\overline{0}}}(\gamma^{-1}\gamma')(g)
=-\tfrac1S+\tfrac1S\sum_{\overline{g}\in\calG/\calH}(\gamma^{-1}\gamma')(g)
=-\tfrac1S.
\end{equation*}
Here, we have used the fact that
\smash{$\sum_{\overline{g}\in\calG/\calH}\gamma(g)=0$} for all $\gamma\in\calH^\perp$, $\gamma\neq1$,
something which follows from multiplying this equation by $\gamma(g_0)-1\neq0$.
Thus, \smash{$\bfPsi^*\bfPsi=\frac1S[(S+1)\bfI-\bfJ]$}.
This in turn implies $\bfPsi\bfPsi^*=\frac{S+1}{S}\bfI$,
a fact which is also straightforward to prove directly.

It is not surprising that $\bfPhi_\gamma=\bfE_\gamma\bfPsi$ for some isometry $\bfE_\gamma$:
though we do not rely on this fact in our proof below,
the interested reader can use~\eqref{eq.simplex sum} to verify that $\bfPhi_\gamma$ and $\bfPsi$ have the same Gram matrix,
which in turn implies that such an isometry necessarily exists.
What is remarkable however is that this $\bfE_\gamma$ is necessarily simple and sparse:
if $\bfPhi_\gamma=\bfE_\gamma\bfPsi$ then multiplying this equation by $\bfPsi^*$ gives $\bfE_\gamma=\frac{S}{S+1}\bfPhi_\gamma\bfPsi^*$ and so for any $d\in\calD$, $\overline{g}\in\calG/\calH\backslash\set{\overline{0}}$,
\begin{equation*}
\bfE_\gamma(d,\overline{g})
=\tfrac{S}{S+1}\sum_{\gamma'\in\calH^\perp}\bfPhi_\gamma(d,\gamma')[\bfPsi(\overline{g},\gamma')]^*
=\tfrac{\sqrt{S}}{(S+1)\sqrt{D}}\gamma(d)\sum_{\gamma'\in\calH^\perp}\gamma'(d-g),
\end{equation*}
at which point the Poisson summation formula and the fineness of $\calD$ implies
\begin{equation*}
\bfE_\gamma(d,\overline{g})
=\tfrac{\sqrt{S}}{(S+1)\sqrt{D}}\gamma(d)(\bfF\bfchi_{\calH^\perp})(d-g)
=\tfrac{\sqrt{S}}{(S+1)\sqrt{D}}\gamma(d)\tfrac{H}{G}\bfchi_{\calH}(d-g)
=\tfrac{\sqrt{S}}{\sqrt{D}}\left\{\begin{array}{cl}
\gamma(d),&\ \overline{d}=\overline{g},\\
0,&\ \overline{d}\neq\overline{g}.
\end{array}\right.
\end{equation*}
Moreover, if $\bfE_\gamma$ is an isometry, then its columns are unit norm,
giving yet a third way of proving that (i $\Rightarrow$ iii) in Theorem~\ref{thm.fine}.
Below, for the sake of an elementary self-contained proof, we instead take the above expression for $\bfE_\gamma$ as a given, and use Theorem~\ref{thm.fine} to show that it is an isometry that satisfies $\bfPhi_\gamma=\bfE_\gamma\bfPsi$.
Both here and later on, we shall also make use of the following fact:

\begin{lemma}
\label{lem.small difference sets}
Let $\calD$ be a difference set for $\calG$ that is $\calH$-fine; see  Definition~\ref{def.fine}.
Then a $\frac{D}{S}$-element subset $\calB$ of $\calH$ is a difference set for $\calH$ if and only if
\begin{equation*}
\abs{(\bfF^*\bfchi_{\calB})(\gamma)}^2
=\tfrac{D^2}{S^3}[\bfone+(S-1)\bfchi_{\calH^\perp}](\gamma),
\quad
\forall\ \gamma\in\hat{\cal{G}}.
\end{equation*}
Moreover, in this case, $S^3$ necessarily divides $D^2$.
\end{lemma}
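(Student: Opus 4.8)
The plan is to reduce the $\calG$-Fourier transform of $\bfchi_\calB$ to a Fourier transform over the smaller group $\calH$, and then invoke the difference-set characterization \eqref{eq.diff set Fourier} there. Since $\calD$ is $\calH$-fine, Theorem~\ref{thm.fine} tells us $S$ is an integer dividing $D$, so $\calB$ genuinely has $B:=\tfrac DS\in\bbZ$ elements. Because $\calB\subseteq\calH$, for every $\gamma\in\hat{\calG}$ we have $(\bfF^*\bfchi_\calB)(\gamma)=\sum_{b\in\calB}[\gamma(b)]^*$, a quantity that depends only on the restriction $\gamma|_\calH\in\hat{\calH}$. The restriction map $\hat{\calG}\to\hat{\calH}$, $\gamma\mapsto\gamma|_\calH$, is a homomorphism whose kernel is exactly $\calH^\perp$ (a character of $\calG$ restricts trivially to $\calH$ iff it annihilates $\calH$), and since $\abs{\calH^\perp}=\tfrac GH=S+1$ forces $\abs{\hat{\calG}/\calH^\perp}=H=\abs{\hat{\calH}}$, this map is onto. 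Writing $\bfF_\calH$ for the character table of $\calH$, we thus obtain $(\bfF^*\bfchi_\calB)(\gamma)=(\bfF_\calH^*\bfchi_\calB)(\gamma|_\calH)$, with every character of $\calH$ arising as some $\gamma|_\calH$, and $\gamma\in\calH^\perp$ iff $\gamma|_\calH=1$.

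Next I would dispose of the values at $\gamma\in\calH^\perp$: there $\gamma|_\calH=1$, so $(\bfF^*\bfchi_\calB)(\gamma)=\#\calB=\tfrac DS$ and $\abs{(\bfF^*\bfchi_\calB)(\gamma)}^2=\tfrac{D^2}{S^2}=\tfrac{D^2}{S^3}\cdot S$, which matches the claimed formula $\tfrac{D^2}{S^3}[\bfone+(S-1)\bfchi_{\calH^\perp}](\gamma)$ there. This holds for \emph{every} $\tfrac DS$-element subset of $\calH$ and so carries no information; the entire ``if and only if'' therefore concerns the values at $\gamma\notin\calH^\perp$, where the formula demands $\abs{(\bfF^*\bfchi_\calB)(\gamma)}^2=\tfrac{D^2}{S^3}$. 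By the surjectivity above this is equivalent to $\abs{(\bfF_\calH^*\bfchi_\calB)(\chi)}^2=\tfrac{D^2}{S^3}$ for \emph{every} nontrivial $\chi\in\hat{\calH}$. Applying the difference-set characterization \eqref{eq.diff set Fourier} to the group $\calH$ and the $B$-element set $\calB$, $\calB$ is a difference set for $\calH$ exactly when $\abs{(\bfF_\calH^*\bfchi_\calB)(\chi)}$ equals the constant $\tfrac{B}{S_\calB}$ over all nontrivial $\chi$, whose square is $\tfrac{B(H-B)}{H-1}$. It then remains only to match constants: substituting $B=\tfrac DS$ and using the fineness relation $\tfrac{S(SH-D)}{D(H-1)}=1$ from \eqref{eq.parameters relationships} (equivalently $SH-D=\tfrac{D(H-1)}{S}$) turns $\tfrac{B(H-B)}{H-1}$ into $\tfrac{D^2}{S^3}$, which is precisely the target value. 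This closes the equivalence.

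For the divisibility, I would recall that for any difference set the squared modulus of a nontrivial Fourier coefficient equals its order, an integer (for a $D$-element difference set this order is $D-\Lambda=\tfrac{D^2}{S^2}$, as noted before Theorem~\ref{thm.fine}). Applied to $\calB$ in $\calH$, its order $\#\calB-\Lambda\in\bbZ$ equals $\tfrac{D^2}{S^3}$ by the computation just carried out, which forces $S^3\mid D^2$. I expect the only genuinely delicate point to be the identification in the first paragraph---verifying that restriction $\hat{\calG}\to\hat{\calH}$ is onto with kernel $\calH^\perp$, so that the index sets ``$\gamma\notin\calH^\perp$'' and ``nontrivial $\chi\in\hat{\calH}$'' correspond bijectively---after which everything reduces to comparing two constant values and one substitution from \eqref{eq.parameters relationships}.
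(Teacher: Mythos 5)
Your proof is correct, but it takes a genuinely different route from the paper's. The paper never leaves $\bbC^\calG$: it rewrites ``$\calB$ is a difference set for $\calH$'' as the explicit convolution identity $\bfchi_\calB*\tilde{\bfchi}_\calB=\tfrac{D}{S^3}[D\bfdelta_0+(S^2-D)\bfchi_\calH]$, where the count $\tfrac DS-\tfrac{D^2}{S^3}$ of representations of each nonzero $h\in\calH$ is extracted from~\eqref{eq.parameters relationships}, and then takes the $\calG$-Fourier transform, with Poisson summation ($\bfF^*\bfchi_\calH=H\bfchi_{\calH^\perp}$) producing the bump on $\calH^\perp$; the divisibility $S^3\mid D^2$ falls out there because that count must be an integer while $S\mid D$. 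You instead push the problem \emph{down} to $\calH$: since $\bfchi_\calB$ is supported on $\calH$, its transform factors through the restriction map $\hat{\calG}\to\hat{\calH}$, which you correctly show is surjective with kernel $\calH^\perp$ (your cardinality argument is valid, and this surjectivity is essentially the dual form of the Poisson summation the paper invokes), so the claim reduces to the characterization~\eqref{eq.diff set Fourier} applied inside $\calH$, together with your observation that the values on $\calH^\perp$ are automatic for \emph{any} $\tfrac DS$-element subset of $\calH$, plus one substitution of the fineness identity $\tfrac{S(SH-D)}{D(H-1)}=1$ to match constants --- which checks out: $\tfrac{(D/S)(H-D/S)}{H-1}=\tfrac{D(SH-D)}{S^2(H-1)}=\tfrac{D^2}{S^3}$. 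Your divisibility argument (the order of $\calB$ is an integer equal to $\tfrac{D^2}{S^3}$) is the same integrality fact as the paper's, packaged via the ``order'' of a difference set rather than via the representation count. What each approach buys: yours makes transparent that the condition at $\gamma\in\calH^\perp$ carries no information, and it reuses~\eqref{eq.diff set Fourier} as a black box instead of recomputing an autocorrelation; the paper's stays self-contained (it needs no facts about extending characters of $\calH$ to $\calG$) and yields the global identity $\bfchi_\calB*\tilde{\bfchi}_\calB=\tfrac{D}{S^3}[D\bfdelta_0+(S^2-D)\bfchi_\calH]$ in the form most directly matched to how the lemma is deployed in Theorems~\ref{thm.EITFF} and~\ref{thm.composite}.
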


\begin{proof}
By~\eqref{eq.parameters relationships},
\smash{$H-1=\tfrac{(S-1)D}{S^2-D}-1=\tfrac{S(D-S)}{S^2-D}$},
which in turn implies \smash{$\tfrac1{H-1}\tfrac{D}{S}(\tfrac{D}{S}-1)=\frac{D}{S}-\frac{D^2}{S^3}$}.
As such, $\calB$ is a difference set for $\calH$ if and only if for every $g\in\calH$,
\smash{$\#\set{(b,b')\in\calB: g=b-b'}
=\frac{D}{S}-\frac{D^2}{S^3}$};
since Theorem~\ref{thm.fine} gives that $S$ divides $D$,
this is only an integer when $S^3$ divides $D^2$.
Moreover, since $\calB$ is a $\frac{D}{S}$-element subset of $\calH$,
this equates to having $\bfchi_\calB\in\bbC^\calG$ satisfy
\begin{equation*}
(\bfchi_\calB*\tilde{\bfchi}_\calB)(g)
=\#\set{(b,b')\in\calB: g=b-b'}
=\left\{\begin{array}{ll}
\frac DS,&\ g=0,\smallskip\\
\frac{D}{S}-\frac{D^2}{S^3},&\ g\in\calH\backslash\set{0},\smallskip\\
0,&\ g\notin\calH,
\end{array}\right.
\end{equation*}
namely to having
$\bfchi_\calB*\tilde{\bfchi}_\calB
=\tfrac{D}{S^3}[D\bfdelta_0+(S^2-D)\bfchi_\calH]$.
Taking Fourier transforms, and again using~\eqref{eq.parameters relationships} and the Poisson summation formula, this equates to our claim:
\begin{equation*}
\abs{(\bfF^*\chi_\calB)(\gamma)}^2
=\tfrac{D}{S^3}[D\bfone+H(S^2-D)\bfchi_{\calH^\perp}](\gamma)
=\tfrac{D^2}{S^3}[\bfone+(S-1)\bfchi_{\calH^\perp}](\gamma),
\quad\forall\ \gamma\in\hat{\calG}.\qedhere
\end{equation*}
\end{proof}

\begin{theorem}
\label{thm.EITFF}
Let $\calD$ be a difference set for $\calG$ that is $\calH$-fine; see Definition~\ref{def.fine}.
Define $\calD_g$, $\bfPhi_\gamma$ and $\bfPsi$ by~\eqref{eq.def of D_g},
\eqref{eq.def of Phi_gamma} and~\eqref{eq.def of Psi}, respectively.
For any $\gamma\in\hat{\calG}$, let
\begin{equation}
\label{eq.def of E_gamma}
\bfE_{\gamma}\in\bbC^{\calD\times(\calG/\calH)\backslash\set{\overline{0}}},
\quad
\bfE_{\gamma}(d,\overline{g})
=\tfrac{\sqrt{S}}{\sqrt{D}}\left\{\begin{array}{cl}
\gamma(d),&\ \overline{d}=\overline{g},\\
0,&\ \overline{d}\neq\overline{g}.
\end{array}\right.
\end{equation}
Then:
\begin{enumerate}
\renewcommand{\labelenumi}{(\alph{enumi})}

\item
For any $\gamma,\gamma'\in\hat{\calG}$,
$\bfE_\gamma^*\bfE_{\gamma'}^{}$ is a diagonal matrix whose diagonal entries are given by
\begin{equation*}
(\bfE_\gamma^*\bfE_{\gamma'}^{})(\overline{g},\overline{g})
=\tfrac{S}{D}\sum_{\substack{d\in\calD\\\overline{d}=\overline{g}}}(\gamma^{-1}\gamma')(d),
\quad
\forall\ \overline{g}\in\calG/\calH\backslash\set{\overline{0}}.
\end{equation*}
Moreover, for any $\gamma\in\hat{\calG}$,
$\bfE_\gamma$ is an isometry, that is, $\bfE_\gamma^*\bfE_\gamma^{}=\bfI$,
and satisfies $\bfPhi_\gamma=\bfE_\gamma\bfPsi$.

\item
The sequence of subspaces \smash{$\set{\calU_{\overline{\gamma}}}_{\overline{\gamma}\in\hat{\calG}/\calH^\perp}$}
given in~\eqref{eq.def of U_gamma} is an EITFF for $\bbC^\calD$ if and only if every $\calD_g$ is a difference set for $\calH$.
In this case, \smash{$S^3$} necessarily divides \smash{$D^2$}.

\item
If every $\calD_g$ is a difference set for $\calH$,
then for every $\gamma\notin\calH^\perp$, the $(\calG/\calH)$-circulant matrix
\begin{equation*}
\bfC_\gamma\in\bbC^{(\calG/\calH)\times(\calG/\calH)},
\quad
\bfC_\gamma(\overline{g},\overline{g}')
=\tfrac{S^{\frac32}}{D}\sum_{\substack{d\in\calD\\\overline{d}=\overline{g}-\overline{g}'}}\gamma(d),
\end{equation*}
is a conference matrix, that is, satisfies $\bfC_\gamma^*\bfC_\gamma^{}=S\bfI$ where
$\abs{\bfC_\gamma(\overline{g},\overline{g}')}
=\left\{\begin{array}{cl}
0,&\ \overline{g}=\overline{g}',\\
1,&\ \overline{g}\neq\overline{g}'.
\end{array}\right.$
\end{enumerate}
\end{theorem}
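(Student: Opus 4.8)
The plan is to prove the three parts in order, since part~(a) supplies the computational engine for both part~(b) and part~(c). For part~(a), I would compute $\bfE_\gamma^*\bfE_{\gamma'}^{}$ entrywise straight from~\eqref{eq.def of E_gamma}. The $\overline{g}$th column of $\bfE_\gamma$ is supported on $\set{d\in\calD:\overline{d}=\overline{g}}$, so the $(\overline{g},\overline{g}'')$ entry of the cross-Gram involves only those $d$ with $\overline{d}=\overline{g}=\overline{g}''$; this both forces the off-diagonal entries to vanish and, using $\gamma(d)^*=(\gamma^{-1})(d)$, yields the claimed diagonal formula. Taking $\gamma'=\gamma$ turns each diagonal entry into $\frac SD\#\set{d\in\calD:\overline{d}=\overline{g}}=\frac SD\#(\calD_g)=1$ by Theorem~\ref{thm.fine}(iii), so $\bfE_\gamma$ is an isometry. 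For $\bfPhi_\gamma=\bfE_\gamma\bfPsi$ I would substitute~\eqref{eq.def of E_gamma} and~\eqref{eq.def of Psi}; since $\calD$ is disjoint from $\calH$, the only surviving term in $(\bfE_\gamma\bfPsi)(d,\gamma')$ is $\overline{g}=\overline{d}$, and it collapses to $\frac1{\sqrt D}\gamma(d)\gamma'(d)=\bfPhi_\gamma(d,\gamma')$.

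For part~(b), part~(a) shows the columns of $\bfE_\gamma$ form an orthonormal basis for $\calU_{\overline{\gamma}}=\rmC(\bfPhi_\gamma)=\rmC(\bfE_\gamma)$, the last equality holding because $\bfPsi$ has full row rank ($\bfPsi\bfPsi^*=\frac{S+1}{S}\bfI$). The TFF/ECTFF property is already secured by~\eqref{eq.fine is ECTFF}, so the subspaces form an EITFF exactly when every pair is isoclinic, i.e.\ when the singular values of each $\bfE_\gamma^*\bfE_{\gamma'}^{}$ are constant. As these matrices are diagonal, their singular values are the moduli of the diagonal entries; writing $d=g+h$ with $h\in\calD_g$ rewrites the $\overline{g}$th modulus as $\frac SD\bigabs{\sum_{h\in\calD_g}\eta(h)}$, where $\eta=(\gamma^{-1}\gamma')|_{\calH}$ is a character of $\calH$ that is nontrivial precisely when $\overline{\gamma}\neq\overline{\gamma}'$. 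I would then apply Lemma~\ref{lem.small difference sets}: for each fixed $g\notin\calH$ (where $\#(\calD_g)=\frac DS$), $\calD_g$ is a difference set for $\calH$ iff $\bigabs{\sum_{h\in\calD_g}\eta(h)}=\frac{D}{S^{3/2}}$ for every nontrivial $\eta$. Matching ``all nontrivial $\eta$'' with ``all pairs'' uses that restriction $\hat{\calG}\to\hat{\calH}$ is surjective with kernel $\calH^\perp$, so every nontrivial $\eta$ equals some $(\gamma^{-1}\gamma')|_{\calH}$ with $\overline{\gamma}\neq\overline{\gamma}'$. Hence the EITFF condition---constant singular value $\frac SD\cdot\frac{D}{S^{3/2}}=\frac1{\sqrt S}$ over all pairs and all $\overline{g}$---is equivalent to every $\calD_g$ (with $g\notin\calH$) being a difference set for $\calH$, and $S^3\mid D^2$ is the final clause of Lemma~\ref{lem.small difference sets}.

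For part~(c), the diagonal of $\bfC_\gamma$ vanishes because $\overline{g}=\overline{g}'$ makes the sum run over $\calD\cap\calH=\emptyset$. For an off-diagonal entry, writing $d=k+h$ with $\overline{k}=\overline{g}-\overline{g}'\neq\overline{0}$ and $h\in\calD_k$ gives modulus $\frac{S^{3/2}}{D}\bigabs{\sum_{h\in\calD_k}\gamma(h)}=\frac{S^{3/2}}{D}\cdot\frac{D}{S^{3/2}}=1$, using Lemma~\ref{lem.small difference sets} together with $\gamma|_{\calH}\neq1$ (as $\gamma\notin\calH^\perp$). For $\bfC_\gamma^*\bfC_\gamma=S\bfI$ I would diagonalize the $(\calG/\calH)$-circulant $\bfC_\gamma$ by the characters $\chi\in\calH^\perp\cong\widehat{\calG/\calH}$: a direct computation (collapsing the coset sum using that $\chi$ is constant on cosets) gives eigenvalues $\frac{S^{3/2}}{D}\sum_{d\in\calD}(\chi^{-1}\gamma)(d)$. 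Since $\gamma\notin\calH^\perp$ forces $\chi^{-1}\gamma\notin\calH^\perp$, in particular $\chi^{-1}\gamma\neq1$, the difference-set property of $\calD$ makes each eigenvalue modulus $\frac{S^{3/2}}{D}\cdot\frac DS=\sqrt S$. Because circulant matrices are normal, all singular values equal $\sqrt S$, whence $\bfC_\gamma^*\bfC_\gamma=S\bfI$.

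The main obstacle is the logical orchestration in part~(b), not any single calculation: isoclinic-ness is a \emph{per-pair} condition (constant moduli over $\overline{g}$), whereas an EITFF demands a \emph{single} constant across all pairs simultaneously, and Lemma~\ref{lem.small difference sets}'s quantifier over all nontrivial $\eta$ must be fed by letting the pair $(\overline{\gamma},\overline{\gamma}')$ range, via surjectivity of character restriction. A secondary point to flag is the scope of ``every $\calD_g$'': the empty sets $\calD_g$ for $g\in\calH$ must be excluded, as only nontrivial cosets index subspaces. It is also worth remarking that $\bfC_\gamma^*\bfC_\gamma=S\bfI$ in part~(c) needs only that $\calD$ is a difference set, while the hypothesis that each $\calD_g$ is a difference set is used solely to force the off-diagonal entries to be unimodular.
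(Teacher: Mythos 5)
Your proposal is correct and follows essentially the same route as the paper's proof: the entrywise computation and Theorem~\ref{thm.fine}(iii) for part~(a), the diagonal cross-Gram matrices combined with Lemma~\ref{lem.small difference sets} (your restriction-to-$\hat{\calH}$ bookkeeping via surjectivity of $\hat{\calG}\to\hat{\calH}$ with kernel $\calH^\perp$ is just the paper's $\bbC^{\calG}$-level Fourier argument in different clothing) for part~(b), and Fourier analysis over $\calG/\calH$ for part~(c), where your diagonalization-plus-normality argument is the same computation the paper packages as the convolution identity $\tilde{\bfx}_\gamma*\bfx_\gamma=\bfdelta_{\overline{0}}$. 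Your closing observation that $\bfC_\gamma^*\bfC_\gamma^{}=S\bfI$ uses only the difference-set property of $\calD$, with the amalgam hypothesis needed solely for unimodularity of the off-diagonal entries, matches the paper's ``broader observation'' exactly.
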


\begin{proof}
We first prove (a).
For any $\gamma,\gamma'\in\hat{\calG}$ and $\overline{g},\overline{g}'\in\calG/\calH\backslash\set{\overline{0}}$,
\begin{equation*}
(\bfE_\gamma^*\bfE_{\gamma'}^{})(\overline{g},\overline{g}')
=\sum_{d\in\calD}[\bfE_\gamma(d,\overline{g})]^*\bfE_{\gamma'}(d,\overline{g}')
=\tfrac{S}{D}\sum_{\substack{d\in\calD\\\overline{g}=\overline{d}=\overline{g}'}}(\gamma^{-1}\gamma')(d).
\end{equation*}
When $\overline{g}\neq\overline{g}'$, the above sum is empty,
implying \smash{$(\bfE_\gamma^*\bfE_{\gamma'}^{})(\overline{g},\overline{g}')=0$}.
That is, \smash{$\bfE_\gamma^*\bfE_{\gamma'}^{}$} is diagonal.
Moreover, since
$\set{d\in\calD: \overline{d}=\overline{g}}
=\set{d\in\calD: d\in g+\calH}
=\calD\cap(g+\calH)
=g+\calD_g$,
continuing the above equation gives that the $\overline{g}$th diagonal entry of this matrix is
\begin{equation*}
(\bfE_\gamma^*\bfE_{\gamma'}^{})(\overline{g},\overline{g})
=\tfrac{S}{D}\sum_{\substack{d\in\calD\\\overline{d}=\overline{g}}}(\gamma^{-1}\gamma')(d)
=\tfrac{S}{D}\sum_{h\in\calD_g}(\gamma^{-1}\gamma')(g+h)
=\tfrac{S}{D}(\gamma^{-1}\gamma')(g)(\bfF^*\bfchi_{\calD_g})(\gamma(\gamma')^{-1}).
\end{equation*}
In the special case where $\gamma=\gamma'$,
combining these facts with Theorem~\ref{thm.fine}(iii) gives that \smash{$\bfE_\gamma^*\bfE_\gamma^{}$} is a diagonal matrix whose $\overline{g}$th diagonal entry is $(\bfE_\gamma^*\bfE_\gamma^{})(\overline{g},\overline{g})
=\tfrac{S}{D}(\bfF^*\bfchi_{\calD_g})(1)
=\tfrac{S}{D}\#(\calD_g)
=1$.
That is, for any \smash{$\gamma\in\hat{\calG}$} we have $\bfE_\gamma^*\bfE_\gamma^{}=\bfI$.
For the final claim of (a),
note that since $\calD$ is disjoint from $\calH$,
any $d\in\calD$ lies in exactly one nonidentity coset of $\calH$,
implying that for any \smash{$\gamma'\in\calH^\perp$},
\begin{equation*}
(\bfE_\gamma\bfPsi)(d,\gamma')
=\sum_{\overline{g}\in\calG/\calH\backslash\set{\overline{0}}}
\bfE_\gamma(d,\overline{g})\bfPsi(\overline{g},\gamma')
=\tfrac{\sqrt{S}}{\sqrt{D}}\gamma(d)\bfPsi(\overline{d},\gamma')
=\tfrac1{\sqrt{D}}\gamma(d)\gamma'(d)
=\bfPhi_\gamma(d,\gamma').
\end{equation*}

For (b), recall that $\calU_{\overline{\gamma}}$ is the column space $\bfPhi_\gamma$.
By (a), $\calU_{\overline{\gamma}}=\rmC(\bfPhi_\gamma)=\rmC(\bfE_\gamma\bfPsi)
\subseteq\rmC(\bfE_\gamma)$.
Moreover, multiplying $\bfPhi_\gamma=\bfE_\gamma\bfPsi$ by $\bfPsi^*$ gives
$\bfE_\gamma=\frac{S}{S+1}\bfPhi_\gamma\bfPsi^*$ and so $\rmC(\bfE_\gamma)\subseteq\rmC(\bfPhi_\gamma)=\calU_{\overline{\gamma}}$.
Thus, $\rmC(\bfE_\gamma)=\calU_{\overline{\gamma}}$.
When combined with the fact that $\bfE_\gamma^*\bfE_\gamma^{}=\bfI$,
this implies that the columns of $\bfE_\gamma$ form an orthonormal basis for $\calU_{\overline{\gamma}}$.
As discussed in Section~2, we thus have that \smash{$\set{\calU_{\overline{\gamma}}}_{\overline{\gamma}\in\hat{\calG}/\calH^\perp}$},
a sequence of $N=H$ subspaces of $\bbC^\calD$ of dimension $M=S$,
is an EITFF for $\bbC^\calD$ if and only if for all $\overline{\gamma}\neq\overline{\gamma}'$, every singular value of $\bfE_{\gamma}^*\bfE_{\gamma'}^{}$ is equal to \smash{$[\tfrac{MN-D}{D(N-1)}]^{\frac12}=[\tfrac{SH-D}{D(H-1)}]^{\frac12}=\frac1{\sqrt{S}}$},
cf.~\eqref{eq.spectral Welch} and~\eqref{eq.parameters relationships}.
Moreover,
since $\bfE_{\gamma}^*\bfE_{\gamma'}^{}$ is diagonal,
its singular values are the absolute values of its diagonal entries.
Altogether, we have that \smash{$\set{\calU_{\overline{\gamma}}}_{\overline{\gamma}\in\hat{\calG}/\calH^\perp}$} is an EITFF for $\bbC^\calD$ if and only if
\begin{equation}
\label{eq.pf of EITFF 1}
\abs{(\bfE_\gamma^*\bfE_{\gamma'}^{})(\overline{g},\overline{g})}
=\tfrac{S}{D}\abs{(\bfF^*\bfchi_{\calD_g})(\gamma(\gamma')^{-1})}
=\tfrac1{\sqrt{S}},
\quad\forall\ \overline{\gamma}\neq\overline{\gamma}',\ \overline{g}\in\calG/\calH\backslash\set{\overline{0}}.
\end{equation}
More simply, this equates to having
$\abs{(\bfF^*\bfchi_{\calD_g})(\gamma)}^2=\tfrac{D^2}{S^3}$ for all $g\notin\calH$ and $\gamma\notin\calH^\perp$.
Here, combining Theorem~\ref{thm.fine}(iii) with the fact that $\calD_g$ is a subset of $\calH$ gives that for any $g\notin\calH$ and $\gamma\in\calH^\perp$, $(\bfF^*\bfchi_{\calD_g})(\gamma)=\#(\calD_g)=\frac{D}{S}$.
As such, \smash{$\set{\calU_{\overline{\gamma}}}_{\overline{\gamma}\in\hat{\calG}/\calH^\perp}$} is an EITFF for $\bbC^\calD$ if and only if
\begin{equation*}
\abs{(\bfF^*\bfchi_{\calD_g})(\gamma)}^2
=\tfrac{D^2}{S^3}\left\{\begin{array}{cl}
S,&\gamma\in\calH^\perp\\
1,&\gamma\notin\calH^\perp
\end{array}\right\}
=\tfrac{D^2}{S^3}[\bfone+(S-1)\bfchi_{\calH^\perp}](\gamma),
\quad\forall\ \gamma\in\hat{\calG},\ g\notin\calH.
\end{equation*}
By Lemma~\ref{lem.small difference sets}, this occurs if and only if for every $g\notin\calH$ we have that $\calD_g$ is a difference set for $\calH$,
and moreover, this requires $S^3$ to divide $D^2$.
Also, since $\calD$ is disjoint from $\calH$ we have $\calD_g=\emptyset$ for all $g\in\calH$, which is trivially a difference set for $\calH$.
As such, \smash{$\set{\calU_{\overline{\gamma}}}_{\overline{\gamma}\in\hat{\calG}/\calH^\perp}$} is an EITFF for $\bbC^\calD$ if and only if every $\calD_g$ is a difference set for $\calH$.

For (c), we first make a broader observation:
for any difference set $\calD$ and subgroup $\calH$ of a finite abelian group $\calG$,
and any $\gamma\notin\calH^\perp$,
the set of all translations of the vector
\begin{equation*}
\bfx_{\gamma}\in\bbC^{\calG/\calH},
\quad
\bfx_{\gamma}(\overline{g})
:=\tfrac{S}{D}\sum_{\substack{d\in\calD\\\overline{d}=\overline{g}}}\gamma(d),
\end{equation*}
is orthonormal,
that is, $\tilde{\bfx}_\gamma*\bfx_\gamma=\bfdelta_{\overline{0}}$.
To see this, recall the identification of $\calH^\perp$ with the dual of $\calG/\calH$
given in Section~2.
In particular, for any $\gamma\in\hat{\calG}$,
the DFT of $\bfx_\gamma$ at $\gamma'\in\calH^\perp$ is
\begin{equation*}
(\bfF_{\calG/\calH}^*\bfx_\gamma)(\gamma')
=\sum_{\overline{g}\in\calG/\calH}
(\gamma')^{-1}(g)\bfx_\gamma(\overline{g})
=\tfrac{S}{D}\sum_{\overline{g}\in\calG/\calH}
(\gamma')^{-1}(g)\sum_{\substack{d\in\calD\\\overline{d}=\overline{g}}}\gamma(d).
\end{equation*}
To continue, note that since $\gamma'\in\calH^\perp$, we have $(\gamma')^{-1}(g)=(\gamma')^{-1}(d)$
whenever $\overline{d}=\overline{g}$ and so
\begin{equation*}
(\bfF_{\calG/\calH}^*\bfx_\gamma)(\gamma')
=\tfrac{S}{D}\sum_{\overline{g}\in\calG/\calH}
\sum_{\substack{d\in\calD\\\overline{d}=\overline{g}}}((\gamma')^{-1}\gamma)(d)
=\tfrac{S}{D}\sum_{d\in\calD}((\gamma')^{-1}\gamma)(d)
=\tfrac{S}{D}(\bfF^*\bfchi_\calD)(\gamma^{-1}\gamma').
\end{equation*}
In particular, for any $\gamma\notin\calH^\perp$ and $\gamma'\in\calH^\perp$,
\eqref{eq.diff set Fourier} gives
\smash{$\abs{(\bfF_{\calG/\calH}^*\bfx_\gamma)(\gamma')}
=\tfrac{S}{D}\abs{(\bfF^*\bfchi_\calD)(\gamma^{-1}\gamma')}
=1$}.
Thus,
\smash{$\bfF_{\calG/\calH}^*(\tilde{\bfx}_\gamma*\bfx_\gamma)
=\abs{\bfF_{\calG/\calH}^*\bfx_\gamma}^2
=\bfone
=\bfF_{\calG/\calH}^*\bfdelta_{\overline{0}}$},
and so $\tilde{\bfx}_\gamma*\bfx_\gamma=\bfdelta_{\overline{0}}$ as claimed.

As an aside, note that when $\calD$ is fine,
combining this fact with (a) gives an independent, alternative proof of the result from~\cite{FickusJKM18} that
\smash{$\set{\calU_{\overline{\gamma}}}_{\overline{\gamma}\in\hat{\calG}/\calH^\perp}$}
is an ECTFF for $\bbC^{\calD}$, cf.\ \eqref{eq.fine is ECTFF}:
the disjointness of $\calD$ and $\calH$ gives $\bfx_{\gamma^{-1}\gamma'}(\overline{0})=0$,
and so whenever $\overline{\gamma}\neq\overline{\gamma}'$ we have
\begin{equation*}
\norm{\bfE_\gamma^*\bfE_{\gamma'}^{}}_\Fro^2
=\sum_{\overline{g}\in\calG/\calH\backslash\set{\overline{0}}}
\abs{(\bfE_\gamma^*\bfE_{\gamma'}^{})(\overline{g},\overline{g})}^2
=\sum_{\overline{g}\in\calG/\calH}
\abs{\bfx_{\gamma^{-1}\gamma'}(\overline{g})}^2
=\norm{\bfx_{\gamma^{-1}\gamma'}}^2
=1;
\end{equation*}
in light of~\eqref{eq.parameters relationships},
these $N=H$ subspaces of $\bbC^\calD$ of dimension $M=S$ thus achieve equality in~\eqref{eq.chordal Welch}.

In particular,
in the special case where $\calD$ is fine and every $\calD_g$ is a difference set for $\calH$,
for any $\gamma\notin\calH^\perp$ we have $\bfx_\gamma(\overline{0})=0$ while  \eqref{eq.pf of EITFF 1} gives
\smash{$\abs{\bfx_{\gamma}(\overline{g})}
=\abs{(\bfE_1^*\bfE_{\gamma}^{})(\overline{g},\overline{g})}
=\frac1{\sqrt{S}}$}
for any $\overline{g}\neq\overline{0}$.
Since $\bfC_\gamma$ is a $(\calG/\calH)$-circulant matrix with
$\bfC_\gamma(\overline{g},\overline{g}')=\sqrt{S}\bfx_\gamma(\overline{g}-\overline{g}')$,
this implies the diagonal entries of $\bfC_\gamma$ are zero while its off-diagonal entries are unimodular.
Moreover, $\bfC_\gamma^*\bfC_\gamma^{}=S\bfI$ since
\begin{equation*}
(\bfC_\gamma^*\bfC_\gamma^{})(\overline{g},\overline{g}')
=S\sum_{g''\in\calG/\calH}
[\bfx_\gamma(\overline{g}''-\overline{g})]^*
\bfx_\gamma(\overline{g}''-\overline{g}')
=S(\tilde{\bfx}_\gamma*\bfx_\gamma)(\overline{g}-\overline{g}')
=S\bfdelta_0(\overline{g}-\overline{g}').\qedhere
\end{equation*}
\end{proof}

We give the difference sets that satisfy the condition of (b) a name:

\begin{definition}
\label{def.amalgam}
We say a difference set $\calD$ for a finite abelian group $\calG$ is an \textit{amalgam} if it is $\calH$-fine for some subgroup $\calH$ of $\calG$---see Definition~\ref{def.fine}---and moreover for every $g\in\calG$, $\calD_g:=\calH\cap(\calD-g)$ is a difference set for $\calH$.
\end{definition}

We emphasize that the above proofs of Theorems~\ref{thm.fine} and~\ref{thm.EITFF} are self-contained:
though these results were strongly motivated by those of~\cite{FickusJKM18},
our proofs here do not rely on facts from~\cite{FickusJKM18} in any formal way.
In particular, though~\cite{FickusJKM18} implies that $\bfPhi_\gamma$ is the synthesis operator of a regular simplex for its span,
we do not assume this fact in our proofs above.
In fact, we instead provide an alternative proof of this fact,
directly proving $\bfPhi_\gamma=\bfE_\gamma\bfPsi$ where $\bfE_\gamma$ is an isometry and $\bfPsi$ is the synthesis operator of a regular simplex.

\begin{example}
\label{ex.8x15 embeddings}
As a continuation of Example~\ref{ex.8x15 fine},
recall that $\calD=\set{6,11,7,12,13,3,9,14}$ is a difference set for $\calG=\bbZ_{15}$.
To form the corresponding harmonic ETF, we extract the corresponding $8$ rows from the $15\times15$ character table of $\calG$.
Here, we regard $\hat{\calG}$ as $\bbZ_{15}$, identifying $n\in\bbZ_{15}$ with the character $g\mapsto\omega^{ng}$ where \smash{$\omega=\rme^{\frac{2\pi\rmi}{15}}$}.
(As such, throughout this example, we use additive notation on $\hat{\calG}$.)
That is, the columns $\set{\bfphi_n}_{n\in\bbZ_{15}}$ of
\begin{equation*}
\bfPhi=\frac1{\sqrt{8}}
\left[\begin{array}{cccccccccccccccc}
\omega^{ 0}&\omega^{ 6}&\omega^{12}&\omega^{ 3}&\omega^{ 9}&
\omega^{ 0}&\omega^{ 6}&\omega^{12}&\omega^{ 3}&\omega^{ 9}&
\omega^{ 0}&\omega^{ 6}&\omega^{12}&\omega^{ 3}&\omega^{ 9}\\
\omega^{ 0}&\omega^{11}&\omega^{ 7}&\omega^{ 3}&\omega^{14}&
\omega^{10}&\omega^{ 6}&\omega^{ 2}&\omega^{13}&\omega^{ 9}&
\omega^{ 5}&\omega^{ 1}&\omega^{12}&\omega^{ 8}&\omega^{ 4}\\
\omega^{ 0}&\omega^{ 7}&\omega^{14}&\omega^{ 6}&\omega^{13}&
\omega^{ 5}&\omega^{12}&\omega^{ 4}&\omega^{11}&\omega^{ 3}&
\omega^{10}&\omega^{ 2}&\omega^{ 9}&\omega^{ 1}&\omega^{ 8}\\
\omega^{ 0}&\omega^{12}&\omega^{ 9}&\omega^{ 6}&\omega^{ 3}&
\omega^{ 0}&\omega^{12}&\omega^{ 9}&\omega^{ 6}&\omega^{ 3}&
\omega^{ 0}&\omega^{12}&\omega^{ 9}&\omega^{ 6}&\omega^{ 3}\\
\omega^{ 0}&\omega^{13}&\omega^{11}&\omega^{ 9}&\omega^{ 7}&
\omega^{ 5}&\omega^{ 3}&\omega^{ 1}&\omega^{14}&\omega^{12}&
\omega^{10}&\omega^{ 8}&\omega^{ 6}&\omega^{ 4}&\omega^{ 2}\\
\omega^{ 0}&\omega^{ 3}&\omega^{ 6}&\omega^{ 9}&\omega^{12}&
\omega^{ 0}&\omega^{ 3}&\omega^{ 6}&\omega^{ 9}&\omega^{12}&
\omega^{ 0}&\omega^{ 3}&\omega^{ 6}&\omega^{ 9}&\omega^{12}\\
\omega^{ 0}&\omega^{ 9}&\omega^{ 3}&\omega^{12}&\omega^{ 6}&
\omega^{ 0}&\omega^{ 9}&\omega^{ 3}&\omega^{12}&\omega^{ 6}&
\omega^{ 0}&\omega^{ 9}&\omega^{ 3}&\omega^{12}&\omega^{ 6}\\
\omega^{ 0}&\omega^{14}&\omega^{13}&\omega^{12}&\omega^{11}&
\omega^{10}&\omega^{ 9}&\omega^{ 8}&\omega^{ 7}&\omega^{ 6}&
\omega^{ 5}&\omega^{ 4}&\omega^{ 3}&\omega^{ 2}&\omega^{ 1}
\end{array}\right]
\end{equation*}
form an $\ETF(8,15)$, having $\abs{\ip{\bfphi_n}{\bfphi_{n'}}}=[\frac{G-D}{D(G-1)}]^{\frac12}=\frac14$ for all $n\neq n'$, and thus achieving equality in the Welch bound~\eqref{eq.Welch bound}.
Letting $S=4$ be the reciprocal of this bound,
we further have that $\calD$ is fine,
being disjoint from the unique subgroup of $\calG$ of order \smash{$H=\frac{G}{S+1}=3$}, namely $\calH=\set{0,5,10}$.
As such, any $8\times 5$ submatrix of $\bfPhi$ whose columns are indexed by a coset of $\calH^\perp$ have the property that these columns sum to zero \`{a} la~\eqref{eq.simplex sum} and moreover form a regular simplex for their $4$-dimensional span.
Here, under our identification of $\hat{\calG}$ with $\bbZ_{15}$,
\smash{$\calH^\perp$} corresponds to those $n\in\bbZ_{15}$ that have the property that $\omega^{nh}=1$ for all $h\in\calH=\set{0,5,10}$,
namely $\set{0,3,6,9,12}$.

Here, for any $n\in\bbZ_{15}$, the matrix $\bfPhi_n$ defined by~\eqref{eq.def of Phi_gamma} is the $8\times5$ submatrix of $\bfPhi$ whose columns are indexed by the $n$th coset of $\calH^\perp$, beginning with $n$, namely
\smash{$\bfPhi_n=\left[\begin{array}{ccccc}
\bfphi_{n}&\bfphi_{n+3}&\bfphi_{n+6}&\bfphi_{n+9}&\bfphi_{n+12}\end{array}\right]$}.
Each $\bfPhi_n$ is unique,
but $\bfPhi_n$ and $\bfPhi_{n'}$ are related via a $5\times 5$ permutation matrix whenever $n-n'\in\calH$.
Moreover, any choice of coset representatives of $\calG/\calH$ yields a partition of the ETF's vectors.
For example, concatenating $\bfPhi_0$, $\bfPhi_1$ and $\bfPhi_2$ yields the matrix obtained by perfectly shuffling the columns of $\bfPhi$:
\begin{equation}
\label{eq.8x15 Phi_gamma}
\left[\begin{array}{ccc}\bfPhi_0&\bfPhi_1&\bfPhi_2\end{array}\right]
=\frac1{\sqrt{8}}\left[\begin{array}{ccccc|ccccc|ccccc}
\omega^{ 0} & \omega^{ 3} & \omega^{ 6} & \omega^{ 9} & \omega^{12} & \omega^{ 6} & \omega^{ 9} & \omega^{12} & \omega^{ 0} & \omega^{ 3} & \omega^{12} & \omega^{ 0} & \omega^{ 3} & \omega^{ 6} & \omega^{ 9}\\
\omega^{ 0} & \omega^{ 3} & \omega^{ 6} & \omega^{ 9} & \omega^{12} & \omega^{11} & \omega^{14} & \omega^{ 2} & \omega^{ 5} & \omega^{ 8} & \omega^{ 7} & \omega^{10} & \omega^{13} & \omega^{ 1} & \omega^{ 4}\\
\omega^{ 0} & \omega^{ 6} & \omega^{12} & \omega^{ 3} & \omega^{ 9} & \omega^{ 7} & \omega^{13} & \omega^{ 4} & \omega^{10} & \omega^{ 1} & \omega^{14} & \omega^{ 5} & \omega^{11} & \omega^{ 2} & \omega^{ 8}\\
\omega^{ 0} & \omega^{ 6} & \omega^{12} & \omega^{ 3} & \omega^{ 9} & \omega^{12} & \omega^{ 3} & \omega^{ 9} & \omega^{ 0} & \omega^{ 6} & \omega^{ 9} & \omega^{ 0} & \omega^{ 6} & \omega^{12} & \omega^{ 3}\\
\omega^{ 0} & \omega^{ 9} & \omega^{ 3} & \omega^{12} & \omega^{ 6} & \omega^{13} & \omega^{ 7} & \omega^{ 1} & \omega^{10} & \omega^{ 4} & \omega^{11} & \omega^{ 5} & \omega^{14} & \omega^{ 8} & \omega^{ 2}\\
\omega^{ 0} & \omega^{ 9} & \omega^{ 3} & \omega^{12} & \omega^{ 6} & \omega^{ 3} & \omega^{12} & \omega^{ 6} & \omega^{ 0} & \omega^{ 9} & \omega^{ 6} & \omega^{ 0} & \omega^{ 9} & \omega^{ 3} & \omega^{12}\\
\omega^{ 0} & \omega^{12} & \omega^{ 9} & \omega^{ 6} & \omega^{ 3} & \omega^{ 9} & \omega^{ 6} & \omega^{ 3} & \omega^{ 0} & \omega^{12} & \omega^{ 3} & \omega^{ 0} & \omega^{12} & \omega^{ 9} & \omega^{ 6}\\
\omega^{ 0} & \omega^{12} & \omega^{ 9} & \omega^{ 6} & \omega^{ 3} & \omega^{14} & \omega^{11} & \omega^{ 8} & \omega^{ 5} & \omega^{ 2} & \omega^{13} & \omega^{10} & \omega^{ 7} & \omega^{ 4} & \omega^{ 1}
\end{array}\right].
\end{equation}
Meanwhile,
removing the first row of the character table of $\bbZ_5\cong\calG/\calH$ and then normalizing columns yields the synthesis operator~\eqref{eq.def of Psi} of a particularly  nice regular $4$-simplex:
\begin{equation}
\label{eq.8x15 Psi}
\bfPsi=\frac1{\sqrt{4}}\left[\begin{array}{ccccc}
\omega^{ 0}&\omega^{ 3}&\omega^{ 6}&\omega^{ 9}&\omega^{12}\\
\omega^{ 0}&\omega^{ 6}&\omega^{12}&\omega^{ 3}&\omega^{ 9}\\
\omega^{ 0}&\omega^{ 9}&\omega^{ 3}&\omega^{12}&\omega^{ 6}\\
\omega^{ 0}&\omega^{12}&\omega^{ 9}&\omega^{ 6}&\omega^{ 3}\\
\end{array}\right].
\end{equation}
By Theorem~\ref{thm.EITFF}(a),
each $\bfPhi_n$ can be decomposed as $\bfPhi_n=\bfE_n\bfPsi$ where $\bfE_n$ is the $8\times 4$ isometry defined in~\eqref{eq.def of E_gamma}.
In particular, the matrices $\bfPhi_0$, $\bfPhi_1$, $\bfPhi_2$ in~\eqref{eq.8x15 Phi_gamma} factor as products of
\begin{equation}
\label{eq.8x15 embeddings}
\bfE_0=\frac1{\sqrt{2}}\left[\begin{array}{cccc}
1 & 0 & 0 & 0\\
1 & 0 & 0 & 0\\
0 & 1 & 0 & 0\\
0 & 1 & 0 & 0\\
0 & 0 & 1 & 0\\
0 & 0 & 1 & 0\\
0 & 0 & 0 & 1\\
0 & 0 & 0 & 1
\end{array}\right],
\quad
\bfE_1=\frac1{\sqrt{2}}\left[\begin{array}{cccc}
\omega^{ 6} & 0 & 0 & 0\\
\omega^{11} & 0 & 0 & 0\\
0 & \omega^{ 7} & 0 & 0\\
0 & \omega^{12} & 0 & 0\\
0 & 0 & \omega^{13} & 0\\
0 & 0 & \omega^{ 3} & 0\\
0 & 0 & 0 & \omega^{ 9}\\
0 & 0 & 0 & \omega^{14}
\end{array}\right],
\quad
\bfE_2=\frac1{\sqrt{2}}\left[\begin{array}{cccc}
\omega^{12} & 0 & 0 & 0\\
\omega^{ 7} & 0 & 0 & 0\\
0 & \omega^{14} & 0 & 0\\
0 & \omega^{ 9} & 0 & 0\\
0 & 0 & \omega^{11} & 0\\
0 & 0 & \omega^{ 6} & 0\\
0 & 0 & 0 & \omega^{ 3}\\
0 & 0 & 0 & \omega^{13}
\end{array}\right],
\end{equation}
with $\bfPsi$, respectively;
here, the rows and columns of $\bfE_n$ are indexed by
$\calD=\set{6,11,7,12,13,3,9,14}$ and  $\calG/\calH\backslash\set{0}=\set{\overline{1},\overline{2},\overline{3},\overline{4}}$, respectively.
That is, each $\bfPhi_n$ is a regular simplex, being an isometric embedding of $\bfPsi$ into a particular $4$-dimensional subspace of $\bbC^\calD$,
namely into $\calU_{\overline{n}}=\rmC(\bfE_n)$.
Continuing, Theorem~\ref{thm.EITFF}(a) also implies every cross-Gram matrix $\bfE_n^*\bfE_{n'}^{}$ is diagonal.
Here, since $\omega^{5}+\omega^{10}=-1$,
\begin{equation}
\label{eq.8x15 cross-Grams}
\bfE_0^*\bfE_1^{}
=\bfE_1^*\bfE_2^{}
=-\frac12\left[\begin{array}{cccc}
\omega&0&0&0\\
0&\omega^2&0&0\\
0&0&\omega^8&0\\
0&0&0&\omega^4
\end{array}\right],
\quad
\bfE_0^*\bfE_2^{}
=-\frac12\left[\begin{array}{cccc}
\omega^2&0&0&0\\
0&\omega^4&0&0\\
0&0&\omega&0\\
0&0&0&\omega^8
\end{array}\right].
\end{equation}
This diagonality is crucial, since it means the singular values of $\bfE_n^*\bfE_{n'}^{}$---the cosines of the principal angles between $\calU_{\overline{n}}$ and $\calU_{\overline{n}'}$---are the absolute values of its diagonal entries.
In particular,
\eqref{eq.8x15 cross-Grams} implies that every principal angle $\theta$ between any pair of the subspaces $\calU_{\overline{0}}$, $\calU_{\overline{1}}$, $\calU_{\overline{2}}$ satisfies $\cos(\theta)=\frac12$,
meaning these three subspaces are equi-isoclinic.
In fact, Theorem~\ref{thm.EITFF}(b) gives that they form an EITFF for $\bbC^\calD$ since $\calD$ is an amalgam:
from Example~\ref{ex.8x15 fine},
recall that $\calD_g=\emptyset$ when $g\in\calH$ while for $g\notin\calH$,
$\calD_g$ is either $\set{5,10}$, $\set{0,5}$ or $\set{0,10}$, each of which is a difference set for $\calH=\set{0,5,10}$.

Since $\calD$ is an amalgam,
Theorem~\ref{thm.EITFF}(c) also applies:
for any $n\notin\calH^\perp$,
we construct the first column of the $5\times 5$ circulant conference matrix $\bfC_n$ by reshaping the diagonal entries of $\bfE_0^*\bfE_n^{}$ into a $4\times 1$ vector, padding it with a leading $0$ entry, and scaling the result by $\sqrt{S}=2$.
For example, in the $n=1$ and $n=2$ cases,
the cross-Gram matrices in~\eqref{eq.8x15 cross-Grams} yield the circulant conference matrices
\begin{equation*}
\bfC_1=-\left[\begin{array}{ccccc}
         0&\omega^{4}&\omega^{8}&\omega^{2}&\omega^{ }\\
\omega^{ }&         0&\omega^{4}&\omega^{8}&\omega^{2}\\
\omega^{2}&\omega^{ }&         0&\omega^{4}&\omega^{8}\\
\omega^{8}&\omega^{2}&\omega^{ }&         0&\omega^{4}\\
\omega^{4}&\omega^{8}&\omega^{2}&\omega^{ }&         0
\end{array}\right],\quad
\bfC_2=-\left[\begin{array}{ccccc}
         0&\omega^{8}&\omega^{ }&\omega^{4}&\omega^{2}\\
\omega^{2}&         0&\omega^{8}&\omega^{ }&\omega^{4}\\
\omega^{4}&\omega^{2}&         0&\omega^{8}&\omega^{ }\\
\omega^{ }&\omega^{4}&\omega^{2}&         0&\omega^{8}\\
\omega^{8}&\omega^{ }&\omega^{4}&\omega^{2}&         0
\end{array}\right].
\end{equation*}
Circulant conference matrices are interesting objects,
and there does not seem to be much literature regarding them.
Perhaps this is due to the long-known fact~\cite{StantonM76,Craigen94,TurekG16} that the only real-valued instances of such a matrix are $\pm[\begin{smallmatrix}0&1\\1&0\end{smallmatrix}]$.
The complex circulant conference matrices we obtain here are conceivably useful in certain applications where complex circulant Hadamard matrices are used,
like waveform design for radar.
While it is famously conjectured that real-valued $N\times N$ circulant Hadamard matrices only exist when $N\in\set{1,4}$---the \textit{circulant Hadamard conjecture}---infinite families of complex circulant Hadamard matrices are known;
such objects are equivalent to the \textit{constant-amplitude, zero-autocorrelation} (CAZAC) sequences of~\cite{BenedettoD07},
and arise from quadratic chirps as well as Bj\"{o}rck-Saffari sequences~\cite{BjorckS95}, for example.
\end{example}

Later on, we show that the above example is but the first member of an infinite family of known fine difference sets that are amalgams and so generate EITFFs and circulant conference matrices via Theorem~\ref{thm.EITFF}.
But first, we consider a subclass of amalgams that can be factored in terms of a certain type of relative difference set (RDS).
As we shall see, such RDSs lend themselves to a related and yet distinct method for constructing circulant conference matrices.

%%%%%%%%%%%%%%%%%%%%%%%%%%%%%%%%%%%%%%%%%%%%%%%%%%%%%%%%%%%%%%%%
\section{Composite difference sets and amalgams}
%%%%%%%%%%%%%%%%%%%%%%%%%%%%%%%%%%%%%%%%%%%%%%%%%%%%%%%%%%%%%%%%

\subsection{Composite difference sets}

In the previous section, we showed that the ECTFF~\eqref{eq.def of U_gamma} arising from a fine difference set $\calD$ is an EITFF for $\bbC^\calD$ if and only if $\calD$ is an amalgam, that is, if and only if for every $g\in\calG$, the set $\calD_g=\calH\cap(\calD-g)$ is a difference set for $\calH$.
In this section, we show that there are an infinite number of fine difference sets that are amalgams, as well as an infinite number that are not.
In fact, some but not all of these amalgams have an even stronger property,
namely that any two nontrivial $\calD_g$ are translations of each other.
For instance, from Example~\ref{ex.8x15 fine},
recall that $\calD=\set{6,11,7,12,13,3,9,14}$ is a difference set for $\bbZ_{15}$ that is $\calH$-fine where $\calH=\set{0,5,10}$ and, for $g\notin\calH$, $\calD_g$ is either $\set{5,10}$, $\set{0,5}$ or $\set{0,10}$.
In particular, there is a choice of representatives of the nonidentity cosets of $\calG/\calH$ such that the corresponding $\calD_g$ are equal:
$\calD_1=\calD_2=\calD_8=\calD_4=\set{5,10}$.
Applying this rationale in general,
we see that if all nontrivial $\calD_g$ are translates of some subset $\calB$ of $\calH$,
then there is a set $\calA$ of representatives of the nonidentity cosets of $\calG/\calH$ such that $\calD_a=\calB$ for all $a\in\calA$.
This in turn implies that $\calD$ can be written as
$\calD=\calA+\calB:=\set{a+b: a\in\calA,\ b\in\calB}$,
and in fact can be partitioned as $\calD=\sqcup_{a\in\calA}(a+\calB)$.
This means the characteristic function of $\calD$ factors as:
\begin{equation*}
\bfchi_\calD
=\sum_{a\in\calA}\chi_{a+\calB}
=\sum_{a\in\calA}\bfdelta_a*\bfchi_{\calB}
=\Bigparen{\sum_{a\in\calA}\bfdelta_a}*\bfchi_{\calB}
=\bfchi_\calA*\bfchi_\calB.
\end{equation*}
In the specific example above, $\calA=\set{1,2,8,4}$ and $\calB=\set{5,10}$.
We now give such sets a name:

\begin{definition}
\label{def.composite}
We say a difference set $\calD$ for a finite abelian group $\calG$ is \textit{composite} if it is $\calH$-fine for some subgroup $\calH$ of $\calG$---see Definition~\ref{def.fine}---and moreover there exist an $S$-element subset $\calA$ of $\calG$ and a difference set $\calB$ for $\calH$ such that $\bfchi_\calD=\bfchi_\calA*\bfchi_\calB$.
\end{definition}

Below, we show that the set $\calA$ here is necessarily an $\calH$-relative difference set (RDS) for $\calG$ with particularly simple parameters.
Here, recall that applying the quotient map $g\mapsto\overline{g}$ to any $\calH$-RDS produces a difference set for $\calG/\calH$.
As we shall see, quotienting the RDS $\calA$ arising from a composite difference set yields $\calG/\calH\backslash\set{\overline{0}}$.
For example, $\calA=\set{1,2,8,4}$ is a $\set{0,5,10}$-RDS for $\bbZ_{15}$ since its difference table is
\begin{equation*}
\begin{array}{c|cccc}
-& 1& 2& 8& 4\\\hline
1& 0&14& 8&12\\
2& 1& 0& 9&13\\
8& 7& 6& 0& 4\\
4& 3& 2&11& 0
\end{array},
\end{equation*}
and quotienting it by $\calH$ yields the nonzero members of $\calG/\calH\cong\bbZ_5$.
We further show that the cross-Gram matrices of the isometries~\eqref{eq.def of E_gamma} arising from a composite difference set have the remarkable property that their \textit{triple products} are scalar multiples of the identity.
For example, for the cross-Gram matrices~\eqref{eq.8x15 cross-Grams} arising from $\calD=\set{6,11,7,12,13,3,9,14}$, we have $(\bfE_0^*\bfE_1^{})(\bfE_1^*\bfE_2^{})(\bfE_2^*\bfE_0^{})$ is:
\begin{equation*}
-\frac18
\left[\begin{array}{cccc}
\omega&0&0&0\\
0&\omega^2&0&0\\
0&0&\omega^8&0\\
0&0&0&\omega^4
\end{array}\right]
\left[\begin{array}{cccc}
\omega&0&0&0\\
0&\omega^2&0&0\\
0&0&\omega^8&0\\
0&0&0&\omega^4
\end{array}\right]
\left[\begin{array}{cccc}
\omega^{13}&0&0&0\\
0&\omega^{11}&0&0\\
0&0&\omega^{14}&0\\
0&0&0&\omega^{7}
\end{array}\right]
=-\frac18
\left[\begin{array}{cccc}
1&0&0&0\\
0&1&0&0\\
0&0&1&0\\
0&0&0&1
\end{array}\right].
\end{equation*}
As we shall see, this implies that the subspaces~\eqref{eq.def of U_gamma} arising from a composite difference set are more than equi-isoclinic,
having orthogonal projection operators $\set{\bfP_n}_{n\in\calN}$ for which
$\bfP_{n_1}\bfP_{n_2}\bfP_{n_3}$ is always a scalar multiple of $\bfP_{n_1}\bfP_{n_3}$.

\begin{theorem}
\label{thm.composite}
Assume $\calD$ is a composite difference set for $\calG$,
and take $\calH$, $\calA$ and $\calB$ as in Definition~\ref{def.composite}.
For any $\gamma\in\hat{\calG}$,
define $\bfE_\gamma$ as in~\eqref{eq.def of E_gamma} and let
$\bfzeta_\gamma\in\bbC^{\calB}$, $\bfzeta_\gamma(b):=\frac{\sqrt{S}}{\sqrt{D}}\gamma(b)$.
Then:
\begin{enumerate}
\renewcommand{\labelenumi}{(\alph{enumi})}

\item
$\calB$ has cardinality $\frac{D}{S}$, and $\calA$ is an $\RDS(S+1,H,S,\frac{S-1}H)$ that is disjoint from $\calH$.

Moreover, $G-D$ divides $D-1$.

\item
$\calD$ is an amalgam---see Definition~\ref{def.amalgam}---with
$\calD_a=\calH\cap(\calD-a)=\calB$ for every $a\in\calA$.

\item
The orthogonal projection operators
$\set{\bfP_{\overline{\gamma}}}_{\overline{\gamma}\in\hat{\calG}/\calH^\perp}$
onto the subspaces~\eqref{eq.def of U_gamma} satisfy
\begin{equation}
\label{eq.composite projections}
\smash{\ip{\bfzeta_{\gamma_1}}{\bfzeta_{\gamma_3}}
\bfP_{\overline{\gamma}_1}\bfP_{\overline{\gamma}_2}\bfP_{\overline{\gamma}_3}
=
\ip{\bfzeta_{\gamma_1}}{\bfzeta_{\gamma_2}}
\ip{\bfzeta_{\gamma_2}}{\bfzeta_{\gamma_3}}
\bfP_{\overline{\gamma}_1}\bfP_{\overline{\gamma}_3},
\quad
\forall\ \gamma_1,\gamma_2,\gamma_3\in\hat{\calG},}
\end{equation}
where
$
\abs{\ip{\bfzeta_{\gamma}}{\bfzeta_{\gamma'}}}
=\left\{\begin{array}{cl}
1,&               \ \overline{\gamma}=\overline{\gamma}',\\
\frac1{\sqrt{S}},&\ \overline{\gamma}\neq\overline{\gamma}'.
\end{array}\right.
$
\end{enumerate}
\end{theorem}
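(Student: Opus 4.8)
My plan is to build everything on part~(a), whose engine is the convolution theorem. Summing the identity $\bfchi_\calD=\bfchi_\calA*\bfchi_\calB$ over $\calG$ (equivalently, noting that this convolution is $0$--$1$ valued) gives $D=\#\calA\,\#\calB=S\,\#\calB$, so $\#\calB=\tfrac DS$. Transforming the factorization yields the pointwise product $\bfF^*\bfchi_\calD=(\bfF^*\bfchi_\calA)(\bfF^*\bfchi_\calB)$, in which $\abs{\bfF^*\bfchi_\calD}$ is known ($D$ at the trivial character, $\tfrac DS$ elsewhere, since $\calD$ is a difference set) and $\abs{\bfF^*\bfchi_\calB}$ is supplied by Lemma~\ref{lem.small difference sets} (namely $\tfrac DS$ on $\calH^\perp$ and $\tfrac{D}{S^{3/2}}$ off it, as $\calB$ is a $\tfrac DS$-element difference set for $\calH$). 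Since $\bfF^*\bfchi_\calB$ never vanishes, I divide to get $\abs{(\bfF^*\bfchi_\calA)(\gamma)}^2=1$ for $\gamma\in\calH^\perp\backslash\set{1}$ and $=S$ for $\gamma\notin\calH^\perp$. Using $SH=G-H$ from~\eqref{eq.parameters relationships} to rewrite $\tfrac{S(S-1)}{G-H}=\tfrac{S-1}H$, this is precisely the profile~\eqref{eq.relative Fourier} of an $\RDS(S+1,H,S,\tfrac{S-1}H)$, so that characterization delivers the RDS claim. Disjointness of $\calA$ from $\calH$ is handled separately: were some $a\in\calA$ to lie in $\calH$, then the nonempty set $a+\calB$ would satisfy $a+\calB\subseteq\calH\cap\calD=\emptyset$, a contradiction. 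Finally, the RDS parameter $\tfrac{S-1}H$ is a positive integer, and rewriting it via~\eqref{eq.parameters relationships} as $\tfrac{D-1}{G-D}$ gives $G-D\mid D-1$.

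For part~(b), the RDS property forces the $S$ elements of $\calA$ into distinct nonzero cosets of $\calH$---a repeated coset would write a nonzero element of $\calH$ as a difference of members of $\calA$---so $\calA$ is a transversal of $\calG/\calH\backslash\set{\overline0}$. Intersecting $\calD-a=\sqcup_{a'\in\calA}(a'-a+\calB)$ with $\calH$ then keeps only the $a'=a$ summand, giving $\calD_a=\calB$. For arbitrary $g\notin\calH$, writing $\overline g=\overline a$ exhibits $\calD_g$ as a translate of $\calB$ inside $\calH$, hence a difference set for $\calH$ (shifts of difference sets are difference sets), while $\calD_g=\emptyset$ for $g\in\calH$; thus $\calD$ is an amalgam.

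For part~(c), I would first record the cross-Gram matrices in closed form. Combining Theorem~\ref{thm.EITFF}(a) with the composite identity $\set{d\in\calD:\overline d=\overline g}=a_{\overline g}+\calB$ (where $a_{\overline g}\in\calA$ represents $\overline g$), each diagonal entry factors as $(\bfE_\gamma^*\bfE_{\gamma'})(\overline g,\overline g)=(\gamma^{-1}\gamma')(a_{\overline g})\ip{\bfzeta_\gamma}{\bfzeta_{\gamma'}}$; that is, $\bfE_\gamma^*\bfE_{\gamma'}=\ip{\bfzeta_\gamma}{\bfzeta_{\gamma'}}\,\boldsymbol{\Delta}_{\gamma^{-1}\gamma'}$, where $\boldsymbol{\Delta}_\chi$ is the diagonal matrix with entries $\chi(a_{\overline g})$ and, crucially, $\chi\mapsto\boldsymbol{\Delta}_\chi$ is multiplicative. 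Since $\bfP_{\overline\gamma}=\bfE_\gamma\bfE_\gamma^*$, expanding $\bfP_{\overline\gamma_1}\bfP_{\overline\gamma_2}\bfP_{\overline\gamma_3}=\bfE_{\gamma_1}(\bfE_{\gamma_1}^*\bfE_{\gamma_2})(\bfE_{\gamma_2}^*\bfE_{\gamma_3})\bfE_{\gamma_3}^*$ and collapsing the two inner cross-Grams via $\boldsymbol{\Delta}_{\gamma_1^{-1}\gamma_2}\boldsymbol{\Delta}_{\gamma_2^{-1}\gamma_3}=\boldsymbol{\Delta}_{\gamma_1^{-1}\gamma_3}$ shows that both sides of~\eqref{eq.composite projections} equal the common matrix $\ip{\bfzeta_{\gamma_1}}{\bfzeta_{\gamma_2}}\ip{\bfzeta_{\gamma_2}}{\bfzeta_{\gamma_3}}\ip{\bfzeta_{\gamma_1}}{\bfzeta_{\gamma_3}}\bfE_{\gamma_1}\boldsymbol{\Delta}_{\gamma_1^{-1}\gamma_3}\bfE_{\gamma_3}^*$, establishing the identity. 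The stated moduli of $\ip{\bfzeta_\gamma}{\bfzeta_{\gamma'}}$ follow from $\abs{\ip{\bfzeta_\gamma}{\bfzeta_{\gamma'}}}=\tfrac SD\abs{(\bfF^*\bfchi_\calB)(\gamma^{-1}\gamma')}$ and Lemma~\ref{lem.small difference sets}.

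The main obstacle I anticipate is spotting the factorization in part~(c): recognizing that every cross-Gram matrix splits as the scalar $\ip{\bfzeta_\gamma}{\bfzeta_{\gamma'}}$, which carries all the magnitude information, times a character-indexed diagonal $\boldsymbol{\Delta}_{\gamma^{-1}\gamma'}$ that is multiplicative in the character. With this in hand the triple product telescopes at once; without it the identity~\eqref{eq.composite projections} looks opaque. The other delicate point is purely in part~(a)---verifying that $\bfF^*\bfchi_\calB$ is nonvanishing so that dividing the transformed factorization is legitimate.
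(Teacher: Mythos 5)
Your proposal is correct and takes essentially the same route as the paper: part~(a) divides the transformed factorization $\bfF^*\bfchi_\calD=(\bfF^*\bfchi_\calA)(\bfF^*\bfchi_\calB)$ against the profile supplied by Lemma~\ref{lem.small difference sets} to match~\eqref{eq.relative Fourier}, part~(b) uses that the RDS property makes $\calA$ a transversal of the nonidentity cosets of $\calH$ so that each $\calD_g$ is a translate of $\calB$, and part~(c) rests on the same cancellation of the $a$-dependent character factors in products of the diagonal cross-Gram matrices from Theorem~\ref{thm.EITFF}(a). Your only local deviations are mild simplifications: you obtain disjointness of $\calA$ from $\calH$ via the containment $a+\calB\subseteq\calH\cap\calD=\emptyset$ rather than the paper's Fourier computation (which uses the exact value $(\bfF^*\bfchi_\calA)(\gamma)=-1$ on $\calH^\perp\setminus\set{1}$, obtained from Theorem~\ref{thm.fine}(ii)), and in~(c) your factorization $\bfE_\gamma^*\bfE_{\gamma'}^{}=\ip{\bfzeta_\gamma}{\bfzeta_{\gamma'}}\boldsymbol{\Delta}_{\gamma^{-1}\gamma'}$ lets you exhibit both sides of~\eqref{eq.composite projections} as one common matrix, sidestepping the paper's right-multiplication by $\tfrac{1}{\ip{\bfzeta_{\gamma_3}}{\bfzeta_{\gamma_1}}}\bfE_{\gamma_1}^*\bfE_{\gamma_3}^{}$ (which is legitimate there only because the moduli computation shows $\ip{\bfzeta_{\gamma_3}}{\bfzeta_{\gamma_1}}\neq0$).
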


\begin{proof}
For (a), recall from Section~2 that a $D$-element subset $\calD$ of $\calG$ is an $\calH$-RDS for $\calG$ if and only if it satisfies~\eqref{eq.relative Fourier};
here, since $\calA$ has cardinality $S$ and $G=H(S+1)$,
$\Lambda$ is necessarily \smash{$\tfrac{S(S-1)}{G-H}=\frac{S-1}{H}$}
and $\calA$ is an $\calH$-RDS for $\calG$ if and only if
\begin{equation}
\label{eq.pf of composite 1}
\abs{(\bfF^*\bfchi_\calA)(\gamma)}^2
=\left\{\begin{array}{cl}
1,&\ \gamma\in\calH^\perp,\ \gamma\neq1,\\
S,&\ \gamma\notin\calH^\perp.
\end{array}\right.
\end{equation}
To show this holds,
note that since
$\bfchi_\calD=\bfchi_\calA*\bfchi_\calB$ we have
$(\bfF^*\bfchi_\calD)(\gamma)=(\bfF^*\bfchi_\calA)(\gamma)(\bfF^*\bfchi_\calB)(\gamma)$ for all $\gamma\in\hat{\calG}$.
Letting $\gamma=1$ gives $D=S\#(\calB)$ and so $\calB$ is a difference set for $\calH$ of cardinality $\frac DS$.
Lemma~\ref{lem.small difference sets} then gives that
\smash{$\abs{(\bfF^*\bfchi_{\calB})(\gamma)}^2=\tfrac{D^2}{S^3}$} for any $\gamma\notin\calH^\perp$.
When combined with the fact from~\eqref{eq.diff set Fourier} that
$\abs{(\bfF^*\bfchi_\calD)(\gamma)}=\frac{D}{S}$ for all $\gamma\neq1$,
this implies
\begin{equation*}
\abs{(\bfF^*\bfchi_\calA)(\gamma)}^2
=\tfrac{\abs{(\bfF^*\bfchi_\calD)(\gamma)}^2}{\abs{(\bfF^*\bfchi_\calB)(\gamma)}^2}
=\tfrac{D^2}{S^2}\tfrac{S^3}{D^2}
=S,
\quad
\forall\ \gamma\notin\calH^\perp.
\end{equation*}
Meanwhile, the fact that $\calB$ is a $\frac{D}{S}$-element subset of $\calH$ implies
$(\bfF^*\bfchi_\calB)(\gamma)=\frac{D}{S}$ for any $\gamma\in\calH^\perp$.
As such, for any $\gamma\in\calH^\perp$, $\gamma\neq1$,
combining this with Theorem~\ref{thm.fine}(ii) gives
\begin{equation*}
(\bfF^*\bfchi_\calA)(\gamma)
=\tfrac{(\bfF^*\bfchi_\calD)(\gamma)}{(\bfF^*\bfchi_\calB)(\gamma)}
=-\tfrac{D}{S}\tfrac{S}{D}
=-1.
\end{equation*}
Thus, \eqref{eq.pf of composite 1} indeed holds, meaning $\calA$ is an $S$-element $\calH$-RDS for the group $\calG$ of order $G=H(S+1)$.
In particular, $\calA$ is an $\RDS(S+1,H,S,\frac{S-1}{H})$.
Moreover, $\calA$ is disjoint from $\calH$ since
\begin{equation*}
\tfrac GH\ip{\bfchi_H}{\bfchi_\calA}
=\tfrac1H\ip{\bfF^*\bfchi_\calH}{\bfF^*\bfchi_\calA}
=\ip{\bfchi_{\calH^\perp}}{\bfF^*\bfchi_\calA}
=S+\sum_{\substack{\gamma\in\calH^\perp\\\gamma\neq1}}(\bfF^*\bfchi_\calA)(\gamma)
=S+S(-1)
=0.
\end{equation*}
Here, for any $g\notin\calH$,
the set $\set{(a,a')\in\calA\times\calA: g=a-a'}$ has cardinality \smash{$\frac{S-1}{H}=\frac{S^2-1}{G}=\frac{D-1}{G-D}$},
and so $G-D$ divides $D-1$.

For (b),
note that since $\calA$ is an $\calH$-$\RDS(S+1,H,S,\frac{S-1}H)$ that is disjoint from $\calH$, quotienting it by $\calH$ yields an $S$-element difference set $\overline{\calA}=\set{\overline{a}: a\in\calA}$ for the group $\calG/\calH$ of order $S+1$ that does not contain $\overline{0}$.
Thus, $\overline{\calA}=\calG/\calH\backslash\set{\overline{0}}$,
and so
$\calA$ is a set of representatives of the nonidentity cosets of $\calH$.
Moreover, since
$\bfchi_\calD
=\bfchi_\calA*\bfchi_\calB
=\sum_{a\in\calA}\bfdelta_a*\bfchi_\calB
=\sum_{a\in\calA}\bfchi_{a+\calB}$,
the set $\calD$ can be partitioned as $\sqcup_{a\in\calA}(a+\calB)$
where $\calB$ is a subset of $\calH$.
This implies that for any $g\notin\calH$,
taking the unique $a\in\calA$ such that $\overline{a}=\overline{g}$, we have
\begin{equation*}
\calD_g
=\calH\cap(\calD-g)
=\calH\cap\Bigparen{\,\bigsqcup_{a'\in\calA}(a'+\calB)-g}
=\bigsqcup_{a'\in\calA}\bigset{\calH\cap[(a'-g)+\calB]}
=(a-g)+\calB
\end{equation*}
is a difference set for $\calH$, being a shift of the difference set $\calB$.
Since $\calD_g$ is empty whenever $g\in\calH$, we thus have that every $\calD_g$ is a difference set for $\calH$, namely that $\calD$ is an amalgam.
Moreover, in the special case where $g=a$, the above equation becomes $\calD_a=\calB$.

For (c), note
\smash{$\set{\bfzeta_\gamma}_{\gamma\in\hat{\calG}}$} consists of $S+1$ copies of the harmonic ETF that arises from $\calB$ being a difference set for $\calH$.
Indeed,
\smash{$\ip{\bfzeta_{\gamma}}{\bfzeta_{\gamma'}}
=\tfrac{S}{D}\sum_{b\in\calB}(\gamma^{-1}\gamma')(b)
=\tfrac{S}{D}(\bfF^*\bfchi_\calB)(\gamma(\gamma')^{-1})$}
for any $\gamma,\gamma'\in\hat{\calG}$,
and so Lemma~\ref{lem.small difference sets} gives
\begin{equation*}
\abs{\ip{\bfzeta_{\gamma}}{\bfzeta_{\gamma'}}}^2
=\tfrac{S^2}{D^2}\abs{(\bfF^*\bfchi_\calB)(\gamma(\gamma')^{-1})}^2
=\left\{\begin{array}{cl}
1,      &\ \gamma(\gamma')^{-1}\in\calH^\perp,\\
\frac1S,&\ \gamma(\gamma')^{-1}\notin\calH^\perp.
\end{array}\right.
\end{equation*}
Moreover, for any $a\in\calA$,
we have $\set{d\in\calD: \overline{d}=\overline{a}}=a+\calD_a=a+\calB=\set{a+b: b\in\calB}$.
Combining these facts with Theorem~\ref{thm.EITFF}(a),
we find that for any $\gamma,\gamma'\in\hat{\calG}$,
$\bfE_\gamma^*\bfE_{\gamma'}^{}$ is a diagonal matrix where
\begin{equation*}
(\bfE_\gamma^*\bfE_{\gamma'}^{})(\overline{a},\overline{a})
=\tfrac{S}{D}\sum_{b\in\calB}(\gamma^{-1}\gamma')(a+b)
=\tfrac{S}{D}(\gamma^{-1}\gamma')(a)(\bfF^*\bfchi_\calB)(\gamma(\gamma')^{-1})
=(\gamma^{-1}\gamma')(a)\ip{\bfzeta_{\gamma}}{\bfzeta_{\gamma'}}
\end{equation*}
for any $a\in\calA$.
As such, for any $\gamma_1,\gamma_2,\gamma_3\in\hat{\calG}$,
$\bfE_{\gamma_3}^*\bfE_{\gamma_1}^{}\bfE_{\gamma_1}^*\bfE_{\gamma_3}^{}$ and
$\bfE_{\gamma_1}^*\bfE_{\gamma_2}^{}\bfE_{\gamma_2}^*\bfE_{\gamma_3}^{}\bfE_{\gamma_3}^*\bfE_{\gamma_1}^{}$ are diagonal matrices whose $\overline{a}$th diagonal entries are
\begin{align*}
(\bfE_{\gamma_3}^*\bfE_{\gamma_1}^{}\bfE_{\gamma_1}^*\bfE_{\gamma_3}^{})(\overline{a},\overline{a})
&=
(\gamma_3^{-1}\gamma_1^{})(a)
(\gamma_1^{-1}\gamma_3^{})(a)
\ip{\bfzeta_{\gamma_3}}{\bfzeta_{\gamma_1}}
\ip{\bfzeta_{\gamma_1}}{\bfzeta_{\gamma_3}},\\
(\bfE_{\gamma_1}^*\bfE_{\gamma_2}^{}\bfE_{\gamma_2}^*\bfE_{\gamma_3}^{}\bfE_{\gamma_3}^*\bfE_{\gamma_1}^{})(\overline{a},\overline{a})
&=
(\gamma_1^{-1}\gamma_2^{})(a)
(\gamma_2^{-1}\gamma_3^{})(a)
(\gamma_3^{-1}\gamma_1^{})(a)
\ip{\bfzeta_{\gamma_1}}{\bfzeta_{\gamma_2}}
\ip{\bfzeta_{\gamma_2}}{\bfzeta_{\gamma_3}}
\ip{\bfzeta_{\gamma_3}}{\bfzeta_{\gamma_1}},
\end{align*}
respectively.
Here, the $a$-dependent terms perfectly cancel, yielding
\begin{equation}
\label{eq.pf of composite 3}
\bfE_{\gamma_3}^*\bfE_{\gamma_1}^{}\bfE_{\gamma_1}^*\bfE_{\gamma_3}^{}
=\abs{\ip{\bfzeta_{\gamma_1}}{\bfzeta_{\gamma_3}}}^2\bfI,
\quad
\bfE_{\gamma_1}^*\bfE_{\gamma_2}^{}
\bfE_{\gamma_2}^*\bfE_{\gamma_3}^{}
\bfE_{\gamma_3}^*\bfE_{\gamma_1}^{}
=
\ip{\bfzeta_{\gamma_1}}{\bfzeta_{\gamma_2}}
\ip{\bfzeta_{\gamma_2}}{\bfzeta_{\gamma_3}}
\ip{\bfzeta_{\gamma_3}}{\bfzeta_{\gamma_1}}
\bfI.
\end{equation}
As such, right-multiplying the second equation by
$\frac1{\ip{\bfzeta_{\gamma_3}}{\bfzeta_{\gamma_1}}}\bfE_{\gamma_1}^*\bfE_{\gamma_3}^{}$ gives
\begin{equation*}
\ip{\bfzeta_{\gamma_1}}{\bfzeta_{\gamma_3}}
\bfE_{\gamma_1}^*\bfE_{\gamma_2}^{}
\bfE_{\gamma_2}^*\bfE_{\gamma_3}^{}
=
\ip{\bfzeta_{\gamma_1}}{\bfzeta_{\gamma_2}}
\ip{\bfzeta_{\gamma_2}}{\bfzeta_{\gamma_3}}
\bfE_{\gamma_1}^*\bfE_{\gamma_3}^{},
\end{equation*}
as claimed.
(The interested reader can verify that this single property actually implies both statements of~\eqref{eq.pf of composite 3} as special cases.)
Moreover, since every $\bfE_\gamma$ satisfies $\bfE_\gamma^*\bfE_\gamma^{}=\bfI$,
this equates to having
$
\ip{\bfzeta_{\gamma_1}}{\bfzeta_{\gamma_3}}
\bfE_{\gamma_1}^{}
\bfE_{\gamma_1}^*\bfE_{\gamma_2}^{}
\bfE_{\gamma_2}^*\bfE_{\gamma_3}^{}
\bfE_{\gamma_3}^*
=
\ip{\bfzeta_{\gamma_1}}{\bfzeta_{\gamma_2}}
\ip{\bfzeta_{\gamma_2}}{\bfzeta_{\gamma_3}}
\bfE_{\gamma_1}^{}
\bfE_{\gamma_1}^*\bfE_{\gamma_3}^{}
\bfE_{\gamma_3}^*
$,
namely to having~\eqref{eq.composite projections}.
In the special case where $\overline{\gamma}_1=\overline{\gamma}_3\neq\overline{\gamma}_2$,
this reduces to the fact that~\eqref{eq.def of U_gamma} is an EITFF for $\bbC^\calD$, as previously observed in Theorem~\ref{thm.EITFF}(b).
\end{proof}

There is a precedent for orthogonal projection operators that satisfy~\eqref{eq.composite projections}.
To elaborate, from Section~2, recall that if $\set{\bfdelta_m}_{m\in\calM}$ is an orthonormal basis for $\bbH$ and for each $m\in\calM$,
\smash{$\set{\bfphi_n^{(m)}}_{n\in\calN}$} is any $\ETF(D,N)$ for $\bbK$,
then $\set{\calU_n}_{n\in\calN}$, \smash{$\calU_n:=\Span\set{\bfdelta_m\otimes\bfphi_n^{(m)}}_{m\in\calM}$}
is an EITFF for $\bbK\otimes\bbH$.
In the special case where these $M$ ETFs are identical,
we have that for any $n\in\calN$, the orthogonal projection operator onto $\calU_n$ is
\begin{equation*}
\bfP_n
=\sum_{m\in\calM}(\bfdelta_m\otimes\bfphi_n)(\bfdelta_m\otimes\bfphi_n)^*
=\Bigparen{\sum_{m\in\calM}\bfdelta_m^{}\bfdelta_m^*}\otimes\bfphi_n^{}\bfphi_n^*
=\bfI\otimes\bfphi_n^{}\bfphi_n^*.
\end{equation*}
In particular, for any $n_1,n_2,n_3\in\calN$,
\begin{align*}
\bfP_{n_1}\bfP_{n_3}
&=\bfI\otimes(\bfphi_{n_1}^{}\bfphi_{n_1}^*\bfphi_{n_3}^{}\bfphi_{n_3}^*)
=\ip{\bfphi_{n_1}}{\bfphi_{n_3}}(\bfI\otimes\bfphi_{n_1}^{}\bfphi_{n_3}^{*}),\\
\bfP_{n_1}\bfP_{n_2}\bfP_{n_3}
&=\bfI\otimes(\bfphi_{n_1}^{}\bfphi_{n_1}^*\bfphi_{n_2}^{}\bfphi_{n_2}^*\bfphi_{n_3}^{}\bfphi_{n_3}^*)
=\ip{\bfphi_{n_1}}{\bfphi_{n_2}}\ip{\bfphi_{n_2}}{\bfphi_{n_3}}(\bfI\otimes\bfphi_{n_1}^{}\bfphi_{n_3}^{*}),
\end{align*}
and so
$
\ip{\bfphi_{n_1}}{\bfphi_{n_3}}
\bfP_{n_1}\bfP_{n_2}\bfP_{n_3}
=
\ip{\bfphi_{n_1}}{\bfphi_{n_2}}
\ip{\bfphi_{n_2}}{\bfphi_{n_3}}
\bfP_{n_1}\bfP_{n_3}
$.
The similarity between this and~\eqref{eq.composite projections} is not a coincidence.
To explain, note that for any fine difference set,
concatenating the matrices \smash{$\set{\bfE_\gamma}_{\gamma\in\hat{\calG}/\calH^\perp}$} from Theorem~\ref{thm.EITFF}
over any choice of representatives of the cosets of \smash{$\calH^\perp$} and then perfectly shuffling columns yields a $D\times HS$ block-diagonal matrix,
specifically an $S\times S$ array of blocks of size $\frac DS\times H$.
For example, applying this process to~\eqref{eq.8x15 embeddings} yields an $8\times 12$ block diagonal matrix, namely a $4\times 4$ array whose four $2\times 3$ diagonal blocks are
\begin{equation*}
\left[\begin{array}{ccc}
1 & \omega^{ 6} & \omega^{12} \\
1 & \omega^{11} & \omega^{ 7}
\end{array}\right],
\quad
\left[\begin{array}{ccc}
1 & \omega^{ 7} & \omega^{14} \\
1 & \omega^{12} & \omega^{ 9}
\end{array}\right],
\quad
\left[\begin{array}{ccc}
1 & \omega^{13} & \omega^{11} \\
1 & \omega^{ 3} & \omega^{ 6}
\end{array}\right],
\quad
\left[\begin{array}{ccc}
1 & \omega^{ 9} & \omega^{ 3} \\
1 & \omega^{14} & \omega^{13}
\end{array}\right].
\end{equation*}
Disregarding scalar multiples of columns,
this is simply four copies of the harmonic ETF that arises from the difference set
$\calB=\set{5,10}$ for $\calH=\set{0,5,10}$.
To see the degree to which this behavior holds in general,
note that for any choice of representatives of the nonidentity cosets of $\calG/\calH$,
dividing every column of $\bfE_\gamma$ by the corresponding value of $\gamma(g)$
yields the matrix whose $(d,\overline{g})$th entry is
\begin{equation*}
\gamma^{-1}(g)\bfE_\gamma(d,\overline{g})
=\gamma^{-1}(g)\tfrac{\sqrt{S}}{\sqrt{D}}\left\{\begin{array}{cl}
\gamma(d),&\ \overline{d}=\overline{g},\\
0,        &\ \overline{d}\neq\overline{g}.
\end{array}\right\}
=\tfrac{\sqrt{S}}{\sqrt{D}}\left\{\begin{array}{cl}
\gamma(d-g),&\ d-g\in\calD_g,\\
0,          &\ d-g\notin\calD_g.
\end{array}\right.
\end{equation*}
Moreover, when $d-g\in\calD_g$, the fact that $\calD_g$ is a subset of $\calH$ implies that the value of $\gamma(d-g)$ only depends on the coset of $\calH^\perp$ to which $\gamma$ belongs.
At the same time, $\hat{\calG}/\calH^\perp$ is isomorphic to the dual of $\calH$ via the mapping that identifies $\overline{\gamma}$ with the restriction of $\gamma$ to $\calH$.
And, under this identification,
the synthesis operator of the harmonic tight frame
$\set{\bfphi_{\overline{\gamma}}^{(g)}}_{\overline{\gamma}\in\hat{\calG}/\calH^\perp}$
that arises from regarding $\calD_g$ as a subset of $\calH$ is
\begin{equation*}
\bfPhi^{(g)}\in\bbC^{\calD_g\times\hat{\calG}/\calH^\perp},
\quad
\bfPhi^{(g)}(h,\overline{\gamma})
:=\tfrac{\sqrt{S}}{\sqrt{D}}\gamma(h).
\end{equation*}
Comparing the previous two equations gives
\begin{equation*}
\gamma^{-1}(g)\bfE_\gamma(d,\overline{g})
=\left\{\begin{array}{cl}
\bfPhi^{(g)}(d-g,\overline{\gamma}),&\ d-g\in\calD_g,\\
0,                                           &\ d-g\notin\calD_g.
\end{array}\right.
\end{equation*}
As such,
the union of the columns of $\bfE_\gamma$ over any choice of representatives of the cosets of $\hat{\calG}/\calH^\perp$ is equivalent---via permutations and unimodular scalings---to a union of \smash{$\set{\bfdelta_{\overline{g}}\otimes\bfphi_{\overline{\gamma}}^{(g)}}_{\overline{\gamma}\in\hat{\calG}/\calH^\perp}$}
over any choice of representatives of the nonidentity cosets of $\calG/\calH$;
here, $\bfdelta_{\overline{g}}$ is the $\overline{g}$th standard basis in the $S$-dimensional space $\bbC^{\calG/\calH\backslash\set{\overline{0}}}$.
Theorem~\ref{thm.EITFF} gives that~\eqref{eq.def of U_gamma} is an EITFF for $\bbC^\calD$ if and only if each $\calD_g$ is a difference set for $\calH$,
namely if and only if every \smash{$\set{\bfphi_{\overline{\gamma}}^{(g)}}_{\overline{\gamma}\in\hat{\calG}/\calH^\perp}$} is an ETF for $\bbC^{\calD_g}$.
Altogether, we see that every harmonic EITFF produced by Theorem~\ref{thm.EITFF} is but a disguised version of the classical EITFF construction given in~\eqref{eq.ETF tensor ONB}.

In the special case that $\calD$ is composite,
then taking our set of representatives of $\calG/\calH\backslash\set{\overline{0}}$ to be $\calA$,
the fact that $\calD_a=\calB$ for all $a\in\calA$ implies all \smash{$\set{\bfphi_{\overline{\gamma}}^{(a)}}_{\overline{\gamma}\in\hat{\calG}/\calH^\perp}$} are equal, being the harmonic ETF arising from the difference set $\calB$ of $\calH$.
In this case, the union of the columns of $\bfE_\gamma$ over any choice of representatives of the cosets of $\hat{\calG}/\calH^\perp$ is thus equivalent to a union of all tensor products of the standard basis of $\bbC^\calA$ with this harmonic ETF for $\bbC^\calB$.
From this perspective,
it is thus not surprising that an EITFF that arises from a composite difference set obeys~\eqref{eq.composite projections}.

For example,
since $\calD=\set{6,11,7,12,13,3,9,14}$ is composite with $\calA=\set{1,2,8,4}$, dividing the four columns of each $\bfE_n$ in~\eqref{eq.8x15 embeddings} by
$\set{\omega^n,\omega^{2n},\omega^{8n},\omega^{4n}}$, respectively,
and then concatenating and shuffling the resulting matrices yields a direct sum of four copies of the $2\times 3$ synthesis operator of the harmonic ETF that arises from $\calB=\set{5,10}$ being a difference set for $\calH=\set{0,5,10}$,
namely
\begin{equation*}
\bfPhi^{(1)}
=\bfPhi^{(2)}
=\bfPhi^{(8)}
=\bfPhi^{(3)}
=\frac1{\sqrt{2}}\left[\begin{array}{ccc}
1&\omega^{ 5}&\omega^{10}\\
1&\omega^{10}&\omega^{ 5}
\end{array}\right].
\end{equation*}
Here, it is simply a coincidence that the harmonic ETF arising from $\calB$ happens to itself be a regular simplex.
Later on, we provide an explicit construction of a composite difference set where $\calB$ turns out to be the complement of an arbitrary Singer difference set.

\subsection{Simplicial relative difference sets}

From Theorem~\ref{thm.composite}, recall that any composite difference set $\calD$ for $\calG$ yields a relative difference set $\calA$, specifically an $\RDS(S+1,H,S,\frac{S-1}{H})$ that is disjoint from $\calH$.
As we now explain, any relative difference set of this type has some remarkable properties, regardless of whether it arises from a composite difference set in this way.
Here, quotienting such an RDS by $\calH$ produces the difference set that consists of all $S$ nonidentity members of the group $\calG/\calH$ of order $S+1$.
That is, any such $\calA$ is a set of representatives of the nonidentity cosets of $\calH$.
We give such sets a name:

\begin{definition}
\label{def.simplicial}
Let $\calH$ be a subgroup of a finite abelian group $\calG$.
A subset $\calA$ of $\calG$ is a \textit{simplicial $\calH$-RDS} if it is an $\calH$-RDS for $\calG$ that is also a set of representatives of the nonidentity cosets of $\calH$.
Equivalently, $\calA$ is an $\calH$-$\RDS(S+1,H,S,\frac{S-1}{H})$ that is disjoint from $\calH$.
\end{definition}

We remark that if $\calA$ is a simplicial RDS for $\calG$,
then if $\calK$ is any subgroup of the corresponding subgroup $\calH$ of $\calG$,
then quotienting by $\calK$ transforms the simplicial $\RDS(S+1,H,S,\frac{S-1}{H})$ into a simplicial $\RDS(S+1,\frac{H}{K},S,\frac{K}{H}(S-1))$.
Below we show that the harmonic tight frame arising from a simplicial RDS is comprised of regular simplices, and moreover,
that these simplices are \textit{mutually unbiased} in the quantum-information-theoretic sense.
For example, extracting the rows of the character table of $\calG=\bbZ_{15}$ indexed by members of the simplicial RDS $\calA=\set{1,2,8,4}$, and grouping the resulting normalized columns according to cosets of $\calH^\perp$ gives
\begin{equation*}
\left[\begin{array}{ccc}\bfXi_0&\bfXi_1&\bfXi_2\end{array}\right]
=\frac1{\sqrt{4}}\left[\begin{array}{ccccc|ccccc|ccccc}
\omega^{ 0}&\omega^{ 3}&\omega^{ 6}&\omega^{ 9}&\omega^{12}&
\omega^{ 1}&\omega^{ 4}&\omega^{ 7}&\omega^{10}&\omega^{13}&
\omega^{ 2}&\omega^{ 5}&\omega^{ 8}&\omega^{11}&\omega^{14}\\
\omega^{ 0}&\omega^{ 6}&\omega^{12}&\omega^{ 3}&\omega^{ 9}&
\omega^{ 2}&\omega^{ 8}&\omega^{14}&\omega^{ 5}&\omega^{11}&
\omega^{ 4}&\omega^{10}&\omega^{ 1}&\omega^{ 7}&\omega^{13}\\
\omega^{ 0}&\omega^{ 9}&\omega^{ 3}&\omega^{12}&\omega^{ 6}&
\omega^{ 8}&\omega^{ 2}&\omega^{11}&\omega^{ 5}&\omega^{14}&
\omega^{ 1}&\omega^{10}&\omega^{ 4}&\omega^{13}&\omega^{ 7}\\
\omega^{ 0}&\omega^{12}&\omega^{ 9}&\omega^{ 6}&\omega^{ 3}&
\omega^{ 4}&\omega^{ 1}&\omega^{13}&\omega^{10}&\omega^{ 7}&
\omega^{ 8}&\omega^{ 5}&\omega^{ 2}&\omega^{14}&\omega^{11}
\end{array}\right],
\end{equation*}
namely three modulated versions of the regular $4$-simplex whose synthesis operator $\bfPsi$ is given in~\eqref{eq.8x15 Psi}.
Here, any columns from distinct simplices have an inner product of modulus $\frac12$.

Below, we further prove that every simplicial RDS for $\calG$ yields a complex $\calG/\calH$-circulant conference matrix.
In the special case where $\calA$ is a simplicial RDS arising from a composite difference set, this construction reduces to a unimodular scalar multiple of the construction given in Theorem~\ref{thm.EITFF}(c).
However, as later examples will demonstrate,
both this construction here and that of Theorem~\ref{thm.EITFF} are nontrivial generalizations of this common case,
and each is capable of producing instances of circulant conference matrices that the other is not.
Here, beginning with any simplicial RDS, such as $\calA=\set{1,2,8,4}$ for $\calG=\bbZ_{15}$ for example,
we modulate its characteristic function by a character of $\calG$ that does not lie in $\calH^\perp$, e.g.\ $(0,\omega,\omega^2,0,\omega^4,0,0,0,\omega^8,0,0,0,0,0,0)$ where
\smash{$\omega=\rme^{\frac{2\pi\rmi}{15}}$}.
We then periodize this vector into one that is indexed by $\calG/\calH$, e.g.\ $(0,\omega,\omega^2,\omega^8,\omega)$.
As we prove below, this new vector is orthogonal to each of its translates.
As such, the $(\calG/\calH)$-circulant matrix with this vector as its first column,
e.g.,\
\begin{equation*}
\left[\begin{array}{ccccc}
         0&\omega^{4}&\omega^{8}&\omega^{2}&\omega^{ }\\
\omega^{ }&         0&\omega^{4}&\omega^{8}&\omega^{2}\\
\omega^{2}&\omega^{ }&         0&\omega^{4}&\omega^{8}\\
\omega^{8}&\omega^{2}&\omega^{ }&         0&\omega^{4}\\
\omega^{4}&\omega^{8}&\omega^{2}&\omega^{ }&         0
\end{array}\right],
\end{equation*}
is a conference matrix.
Since this construction is valid for any character that does not lie in $\calH^\perp$,
we may, for example, raise each entry of the above matrix to any power not divisible by $3$ to obtain another such matrix.

\begin{theorem}
\label{thm.simplicial}
Let $\calH$ be a subgroup of a finite abelian group $\calG$.
Let $\calA$ be a subset of $\calG$ with $\#(\calA)=S=\frac GH-1$.
Let $\set{\bfxi_\gamma}_{\gamma\in\hat{\calG}}\subseteq\bbC^\calA$, $\bfxi_\gamma(a):=\frac1{\sqrt{S}}\gamma(a)$.
Then, the following are equivalent:
\begin{enumerate}
\renewcommand{\labelenumi}{(\roman{enumi})}
\item
$\calA$ is a simplicial $\calH$-RDS for $\calG$; see Definition~\ref{def.simplicial}.
\item
$(\bfF^*\bfchi_\calA)(\gamma)=-1$ for all $\gamma\in\calH^\perp$, $\gamma\neq1$ while
$\abs{(\bfF^*\bfchi_\calA)(\gamma)}=\sqrt{S}$ for all $\gamma\notin\calH^\perp$.
\item
For each $\gamma\in\hat{\calG}$,
$\set{\bfxi_{\gamma'}}_{\gamma'\in\gamma\calH^\perp}$ is a regular $S$-simplex whose vectors sum to zero, and moreover these simplices are mutually unbiased in the sense that
$\abs{\ip{\bfxi_\gamma}{\bfxi_{\gamma'}}}$ is constant over all
$\overline{\gamma}\neq\overline{\gamma}'$.
\end{enumerate}
Moreover, in this case,
$\abs{\ip{\bfxi_\gamma}{\bfxi_{\gamma'}}}=\frac1{\sqrt{S}}$
whenever $\overline{\gamma}\neq\overline{\gamma}'$, and for any $\gamma\notin\calH^\perp$,
\begin{equation*}
\hat{\bfC}_\gamma\in\bbC^{\calG/\calH\times\calG/\calH},
\quad
\hat{\bfC}_\gamma(\overline{g},\overline{g}')
:=\sum_{\substack{a\in\calA\\\overline{a}=\overline{g}-\overline{g}'}}\gamma(a),
\end{equation*}
is a circulant conference matrix.
In the special case where $\calA$ arises from a composite difference set $\calD$ via Theorem~\ref{thm.composite},
$\hat{\bfC}$ is a unimodular scalar multiple of the matrix $\bfC$ constructed in Theorem~\ref{thm.EITFF}(c).
\end{theorem}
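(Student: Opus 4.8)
The plan is to reduce the entire statement to facts about the discrete Fourier transform of $\bfchi_\calA$. Since $\norm{\bfxi_\gamma}=1$ and $\ip{\bfxi_\gamma}{\bfxi_{\gamma'}}=\frac1S\sum_{a\in\calA}(\gamma^{-1}\gamma')(a)=\frac1S(\bfF^*\bfchi_\calA)(\gamma(\gamma')^{-1})$ for all $\gamma,\gamma'\in\hat\calG$, every assertion below is controlled by the values of $(\bfF^*\bfchi_\calA)(\eta)$ as $\eta$ ranges over $\calH^\perp\backslash\set{1}$ and over $\hat\calG\backslash\calH^\perp$. I would prove (i)$\Leftrightarrow$(ii), then (ii)$\Leftrightarrow$(iii), and finally read off the conference-matrix claims from (ii).

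For (i)$\Leftrightarrow$(ii), note that $\calA$ has $S$ elements in a group of order $G=H(S+1)$, so~\eqref{eq.relative Fourier} with $\Lambda=\frac{S-1}{H}$ (whence $D-\Lambda H=S-(S-1)=1$) shows $\calA$ is an $\calH$-$\RDS(S+1,H,S,\frac{S-1}H)$ if and only if $\abs{(\bfF^*\bfchi_\calA)(\gamma)}=1$ for all $\gamma\in\calH^\perp\backslash\set{1}$ and $\abs{(\bfF^*\bfchi_\calA)(\gamma)}=\sqrt S$ for all $\gamma\notin\calH^\perp$. It remains to tie disjointness from $\calH$ to the exact value $-1$ on $\calH^\perp\backslash\set{1}$. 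Exactly as in Theorem~\ref{thm.composite}(a), the Poisson summation formula gives $\frac GH\ip{\bfchi_\calH}{\bfchi_\calA}=\sum_{\gamma\in\calH^\perp}(\bfF^*\bfchi_\calA)(\gamma)=S+\sum_{\gamma\in\calH^\perp\backslash\set{1}}(\bfF^*\bfchi_\calA)(\gamma)$; under the modulus-$1$ condition the latter sum involves $S$ unimodular terms, so by equality in the triangle inequality it vanishes if and only if each equals $-1$. Thus disjointness is equivalent to $(\bfF^*\bfchi_\calA)=-1$ on $\calH^\perp\backslash\set{1}$, giving (ii); by Definition~\ref{def.simplicial} this is exactly (i). Conversely, given (i), descending $\gamma\in\calH^\perp$ to a character of $\calG/\calH$ and using that the quotient of $\calA$ is $\calG/\calH\backslash\set{\overline0}$ yields $(\bfF^*\bfchi_\calA)(\gamma)=\sum_{\overline g\neq\overline0}\overline\gamma^{-1}(\overline g)=-1$, while the RDS modulus supplies $\sqrt S$ off $\calH^\perp$.

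For (ii)$\Leftrightarrow$(iii), the inner-product formula shows that, for $\overline\gamma=\overline{\gamma}'$ with $\gamma\neq\gamma'$, $\ip{\bfxi_\gamma}{\bfxi_{\gamma'}}=-\frac1S$ exactly when $(\bfF^*\bfchi_\calA)=-1$ on $\calH^\perp\backslash\set{1}$; since unit vectors with pairwise inner product $-\frac1S$ are precisely a regular $S$-simplex (which automatically sums to zero, as its Gram matrix $\frac1S[(S+1)\bfI-\bfJ]$ annihilates $\bfone$, or by the Poisson computation of~\eqref{eq.simplex sum} using disjointness), the simplex half of (iii) matches the $\calH^\perp$ half of (ii). Likewise $\abs{\ip{\bfxi_\gamma}{\bfxi_{\gamma'}}}=\frac1S\abs{(\bfF^*\bfchi_\calA)(\gamma(\gamma')^{-1})}$ is constant over $\overline\gamma\neq\overline\gamma'$ exactly when $\abs{(\bfF^*\bfchi_\calA)}$ is constant off $\calH^\perp$. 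The one genuinely delicate point, and the main obstacle, is pinning this constant in the (iii)$\Rightarrow$(ii) direction: the mutually unbiased hypothesis is only qualitative, so I would invoke Parseval, $\sum_{\gamma\in\hat\calG}\abs{(\bfF^*\bfchi_\calA)(\gamma)}^2=G\norm{\bfchi_\calA}^2=GS$, and subtract the contribution $S^2+S$ of $\calH^\perp$ (using the simplex condition) to find that the common squared modulus over the $(S+1)(H-1)$ characters off $\calH^\perp$ must be $S$. This simultaneously forces (ii) and yields $\abs{\ip{\bfxi_\gamma}{\bfxi_{\gamma'}}}=\frac1{\sqrt S}$ for $\overline\gamma\neq\overline\gamma'$.

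Finally, set $\hat\bfx_\gamma(\overline g):=\sum_{a\in\calA,\ \overline a=\overline g}\gamma(a)$, so $\hat\bfC_\gamma$ is the $(\calG/\calH)$-circulant matrix with generator $\hat\bfx_\gamma$. Disjointness gives $\hat\bfx_\gamma(\overline0)=0$ (zero diagonal), and since $\calA$ meets each nonidentity coset exactly once, $\hat\bfx_\gamma(\overline g)$ is a single root of unity for $\overline g\neq\overline0$ (unimodular off-diagonal). Computing the DFT over $\calG/\calH$ via $\calH^\perp\cong\widehat{\calG/\calH}$ gives $(\bfF_{\calG/\calH}^*\hat\bfx_\gamma)(\gamma')=(\bfF^*\bfchi_\calA)(\gamma^{-1}\gamma')$ for $\gamma'\in\calH^\perp$, of modulus $\sqrt S$ whenever $\gamma\notin\calH^\perp$ by (ii); hence $\bfF_{\calG/\calH}^*(\widetilde{\hat\bfx}_\gamma*\hat\bfx_\gamma)=\abs{\bfF_{\calG/\calH}^*\hat\bfx_\gamma}^2=S\bfone$, so $\widetilde{\hat\bfx}_\gamma*\hat\bfx_\gamma=S\bfdelta_{\overline0}$ and $\hat\bfC_\gamma^*\hat\bfC_\gamma=S\bfI$, exactly as in Theorem~\ref{thm.EITFF}(c). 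For the composite comparison, $\set{d\in\calD:\overline d=\overline g}=a+\calB$ for the unique $a\in\calA$ with $\overline a=\overline g$, so the generator $\bfx_\gamma(\overline g)=\frac SD\sum_{d:\overline d=\overline g}\gamma(d)$ of $\bfC_\gamma$ factors as $\bigparen{\frac SD\sum_{b\in\calB}\gamma(b)}\gamma(a)=\bigparen{\frac SD\sum_{b\in\calB}\gamma(b)}\hat\bfx_\gamma(\overline g)$. The bracketed scalar is independent of $\overline g$, and by Lemma~\ref{lem.small difference sets} has modulus $\frac SD\cdot\frac{D}{S^{3/2}}=\frac1{\sqrt S}$; thus $\bfC_\gamma=\bigparen{\sqrt S\,\frac SD\sum_{b\in\calB}\gamma(b)}\hat\bfC_\gamma$ with unimodular ratio, so $\hat\bfC_\gamma$ is a unimodular scalar multiple of $\bfC_\gamma$. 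Apart from the Parseval normalization above, the remaining work is routine once the several DFT and Poisson normalizations are kept consistent.
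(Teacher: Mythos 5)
Your proposal is correct and takes essentially the same route as the paper's proof: the Fourier characterization \eqref{eq.relative Fourier} of an $\calH$-$\RDS(S+1,H,S,\frac{S-1}{H})$, Poisson summation to express disjointness from $\calH$, the triangle-inequality pinning of the values $-1$ on $\calH^\perp\backslash\set{1}$, the identical DFT-over-$\calG/\calH$ autocorrelation argument for $\hat{\bfC}_\gamma^*\hat{\bfC}_\gamma^{}=S\bfI$, and the same Lemma~\ref{lem.small difference sets} unimodular-scalar comparison in the composite case. Your Parseval step $\sum_{\gamma}\abs{(\bfF^*\bfchi_\calA)(\gamma)}^2=GS$ is the paper's computation of $\norm{\bfXi^*\bfXi}_\Fro^2$ via tightness in different clothing, and your one compressed point---that condition (iii) forces the within-coset inner products to equal $-\frac1S$ exactly, not merely in modulus---is easily repaired by the same triangle-inequality trick applied to a row of the Gram matrix (each row of $\bfXi^*\bfXi$ applied to $\bfone$ gives $S$ numbers of modulus $\frac1S$ summing to $-1$), whereas the paper routes this through the disjointness condition \eqref{eq.pf of simplicial 2}.
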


\begin{proof}
Since $S=\frac{G}{H}-1$,
we have $G=H(S+1)$ and moreover \smash{$\frac{S(S-1)}{G-H}=\frac{S-1}{H}$}.
Thus, $\calA$ is an $\calH$-RDS for $\calG$ if and only if it is an $\calH$-$\RDS(S+1,H,S,\frac{S-1}{H})$.
Moreover, in this case, \eqref{eq.relative Fourier} and~\eqref{eq.relative harmonic} thus give that $\calA$ is an $\calH$-RDS if and only if
\begin{equation}
\label{eq.pf of simplicial 1}
\abs{(\bfF^*\bfchi_\calA)(\gamma)}^2
=\left\{\begin{array}{cl}
1,&\ \gamma\in\calH^\perp,\ \gamma\neq1,\\
S,&\ \gamma\notin\calH^\perp,
\end{array}\right.
\quad
\text{i.e.,}
\quad
\abs{\ip{\bfxi_\gamma}{\bfxi_{\gamma'}}}
=\left\{\begin{array}{cl}
\frac1S,         &\ \overline{\gamma}=\overline{\gamma}',\ \gamma\neq\gamma',\smallskip\\
\frac1{\sqrt{S}},&\ \overline{\gamma}\neq\overline{\gamma}'.
\end{array}\right.
\end{equation}
Moreover, $\calA$ is disjoint from $\calH$ if and only if
\begin{equation}
\label{eq.pf of simplicial 2}
0
=\tfrac GH\ip{\bfchi_H}{\bfchi_\calA}
=\tfrac1H\ip{\bfF^*\bfchi_\calH}{\bfF^*\bfchi_\calA}
=\ip{\bfchi_{\calH^\perp}}{\bfF^*\bfchi_\calA}
=S+\sum_{\substack{\gamma\in\calH^\perp\\\gamma\neq1}}(\bfF^*\bfchi_\calA)(\gamma).
\end{equation}
In particular, $\calA$ is a simplicial $\calH$-RDS if and only if it is an $\calH$-$\RDS(S+1,H,S,\frac{S-1}{H})$ that is disjoint from $\calH$,
namely if and only if it satisfies both~\eqref{eq.pf of simplicial 1} and~\eqref{eq.pf of simplicial 2}.

(i $\Leftrightarrow$ ii)
In light of the above facts,
it suffices to prove that (ii) holds if and only if $\calA$ satisfies both~\eqref{eq.pf of simplicial 1} and~\eqref{eq.pf of simplicial 2}.
Here, (ii) immediately implies both~\eqref{eq.pf of simplicial 1} and~\eqref{eq.pf of simplicial 2}.
Conversely, if~\eqref{eq.pf of simplicial 1} and~\eqref{eq.pf of simplicial 2} hold,
then $\abs{(\bfF^*\bfchi_\calA)(\gamma)}=\sqrt{S}$ for all $\gamma\notin\calH^\perp$ and moreover $\set{(\bfF^*\bfchi_\calA)(\gamma)}_{\gamma\in\calH^\perp,\gamma\neq1}$ is a sequence of $S$ unimodular numbers that sum to $-S$,
implying $(\bfF^*\bfchi_\calA)(\gamma)=-1$ for all $\gamma\in\calH^\perp$, $\gamma\neq1$.

(i $\Leftrightarrow$ iii)
Again, it suffices to prove that (iii) holds if and only if $\calA$ satisfies both~\eqref{eq.pf of simplicial 1} and~\eqref{eq.pf of simplicial 2}.
Here,
$\set{\bfxi_{\gamma'}}_{\gamma'\in\gamma\calH^\perp}$
is a regular $S$-simplex for each $\gamma\in\hat{\calG}$ if and only if $\abs{\ip{\bfxi_\gamma}{\bfxi_{\gamma'}}}=\frac1S$ for all $\gamma\neq\gamma'$ such that
$\overline{\gamma}=\overline{\gamma}'$.
Moreover, if a collection of regular $S$-simplices are mutually unbiased,
that is, if $\abs{\ip{\bfxi_\gamma}{\bfxi_{\gamma'}}}$ is constant over all $\gamma,\gamma'$
$\overline{\gamma}\neq\overline{\gamma}'$,
then this constant is necessarily \smash{$\frac1{\sqrt{S}}$}:
since \smash{$\set{\bfxi_\gamma}_{\gamma\in\hat{\calG}}$},
is a unit norm tight frame for \smash{$\bbC^\calA$},
its synthesis operator $\bfXi$ satisfies \smash{$\bfXi\bfXi^*=\frac{G}{S}\bfI$} and so
\begin{equation*}
\tfrac{G^2}{S}
=\Tr[(\tfrac{G^2}{S}\bfI)^2]
=\Tr[(\bfXi\bfXi^*)^2]
=\norm{\bfXi^*\bfXi}_\Fro^2
=G[(1^2)+S(\tfrac1S)^2]
+\sum_{\gamma\in\hat{\calG}}
\sum_{\substack{\gamma'\in\hat{\calG}\\\overline{\gamma}'\neq\overline{\gamma}}}
\abs{\ip{\bfxi_\gamma}{\bfxi_{\gamma'}}}^2,
\end{equation*}
implying the average value of $\abs{\ip{\bfxi_\gamma}{\bfxi_{\gamma'}}}^2$ over all $G(G-S-1)$ choices of $\gamma\neq\gamma'$, $\overline{\gamma}\neq\overline{\gamma}'$ is $\frac1S$:
\begin{equation*}
\tfrac1{G(G-S-1)}\sum_{\gamma\in\hat{\calG}}\sum_{\overline{\gamma}'\neq\overline{\gamma}}
\abs{\ip{\bfxi_\gamma}{\bfxi_{\gamma'}}}^2
=\tfrac1{G(G-S-1)}[\tfrac{G^2}{S}-\tfrac{G(S+1)}{S}]
=\tfrac1S.
\end{equation*}
In particular,
\eqref{eq.pf of simplicial 1} holds if and only if $\set{\set{\bfxi_{\gamma'}}_{\gamma'\in\gamma\calH^\perp}}_{\overline{\gamma}\in\calG/\calH^\perp}$ is a collection of $H$ mutually unbiased regular $S$-simplices.
Meanwhile~\eqref{eq.pf of simplicial 2} equates to each $\set{\bfxi_{\gamma'}}_{\gamma'\in\gamma\calH^\perp}$ summing to zero:
for any $\gamma\in\hat{\calG}$,
\begin{equation*}
\sum_{\gamma'\in\gamma\calH^\perp}\bfxi_{\gamma'}(a)
=\tfrac1{\sqrt{S}}\sum_{\gamma'\in\calH^\perp}\gamma(a)\gamma'(a)
=\tfrac1{\sqrt{S}}\gamma(a)(\bfF\bfchi_{\calH^\perp})(a)
=\tfrac1{\sqrt{S}}\gamma(a)\tfrac{G}{H}\bfchi_\calH(a)
\end{equation*}
is zero for all $a\in\calA$ if and only if $\calA$ is disjoint from $\calH$,
namely if and only if~\eqref{eq.pf of simplicial 2} holds.

For the final conclusions, now assume $\calA$ is a simplicial $\calH$-RDS,
take any $\gamma\notin\calH^\perp$, and define $\hat{\bfC}_{\gamma}$ as in the statement of the result.
When $\overline{g}=\overline{g}'$,
the fact that $\calA$ is disjoint from $\calH$ means
$\set{a\in\calA: \overline{a}=\overline{g}-\overline{g}'}
=\set{a\in\calA: a\in\calH}$ is empty,
and so every diagonal entry of $\hat{\bfC}_{\gamma}$ is $0$.
Meanwhile, if $\overline{g}=\overline{g}'$,
the fact that $\calA$ is a set of representatives of the nonidentity cosets of $\calH$ implies that $\set{a\in\calA: \overline{a}=\overline{g}-\overline{g}'}$ is a singleton set,
meaning $\hat{\bfC}_{\gamma}(\overline{g},\overline{g}')$ is unimodular, being a ``sum" of a single unimodular number.
As such, all that remains to be seen is that $\hat{\bfC}^*\hat{\bfC}=S\bfI$.
Here,
$(\hat{\bfC}_\gamma^*\hat{\bfC}_\gamma^{})(\overline{g},\overline{g}')
=(\tilde{\bfy}_\gamma*\bfy_\gamma)(\overline{g}-\overline{g}')$,
where $\bfy_\gamma\in\bbC^{\calG/\calH}$ is the first column of $\hat{\bfC}$,
defined by $\bfy_\gamma(\overline{g})$ being $0$ when $\overline{g}=\overline{0}$,
and as being $\gamma(a)$ whenever $\overline{g}\neq\overline{0}$,
where $a$ is the unique member of $\calA$ such that $\overline{a}=\overline{g}$.
It thus suffices to show that
\smash{$\tilde{\bfy}_\gamma*\bfy_\gamma=S\bfdelta_{\overline{0}}$}.
Taking Fourier transforms over the group $\calG/\calH$,
this further equates to having
\smash{$\abs{(\bfF_{\calG/\calH}^*\bfy_\gamma)(\gamma')}^2=S$} for all $\gamma'$ in the dual of $\calG/\calH$ which,
as noted in Section~2, is naturally identified with $\calH^\perp$.
This is indeed the case:
for any $\gamma'\in\calH^\perp$,
\begin{equation*}
(\bfF_{\calG/\calH}^*\bfy_\gamma)(\gamma')
=\sum_{\overline{g}\in\calG/\calH}
(\gamma')^{-1}(g)\bfy_\gamma(\overline{g})
=\sum_{a\in\calA}
(\gamma')^{-1}(a)\gamma(a)
=(\bfF^*\bfchi_\calA)(\gamma^{-1}\gamma'),
\end{equation*}
and combining this with~\eqref{eq.pf of simplicial 1} and the fact that $\gamma\notin\calH^\perp$ gives
$\abs{(\bfF_{\calG/\calH}^*\bfy_\gamma)(\gamma')}^2=S$.

In the special case where $\calA$ is an RDS that arises from a composite difference set
in the manner of Theorem~\ref{thm.composite}(a),
then for any $g\notin\calH$,
taking the unique $a\in\calA$ such that $\overline{a}=\overline{g}$ we have $\set{a'\in\calA: \overline{d}=\overline{g}}=\set{a}$ while
Theorem~\ref{thm.composite}(b) gives
$\set{d\in\calD: \overline{d}=\overline{g}}
=a+\calD_a
=a+\calB$.
As such, for any $\overline{g}\neq\overline{g}'$,
the construction of the circulant conference matrix $\bfC_\gamma$ of Theorem~\ref{thm.EITFF}(c) reduces to
\begin{equation*}
\bfC_\gamma(\overline{g},\overline{g}')
=\tfrac{S^{\frac32}}{D}\sum_{\substack{d\in\calD\\\overline{d}=\overline{g}-\overline{g}'}}\gamma(d)
=\tfrac{S^{\frac32}}{D}\sum_{b\in\calB}\gamma(a+b)
=\bigbracket{\tfrac{S^{\frac32}}{D}\bfF^*(\bfchi_\calB)(\gamma^{-1})}\gamma(a)
=z\hat{\bfC}_\gamma(\overline{g},\overline{g}')
\end{equation*}
where \smash{$z=\bigbracket{\tfrac{S^{\frac32}}{D}\bfF^*(\bfchi_\calB)(\gamma^{-1})}$} is constant over all $\overline{g}\neq\overline{g}'$.
Since every diagonal entry of both $\bfC_\gamma$ and $\hat{\bfC}_\gamma$ is zero, this implies $\bfC_\gamma=z\hat{\bfC}_\gamma$ where $\abs{z}=1$ by Lemma~\ref{lem.small difference sets}.
\end{proof}

\subsection{Constructions of fine difference sets, composite difference sets and amalgams}

We now discuss how the known fine difference sets from~\cite{FickusJKM18} fit into the framework discussed here.
We shall see that some fine difference sets are amalgams while others are not, and that some amalgams are composite difference sets while others are not.

\subsubsection{Singer difference sets}

For any prime power $Q$, let $\bbF_Q$ denote the finite field of order $Q$ and let $\bbF_Q^\times$ denote its multiplicative group, which is well known to be cyclic.
For any integer $J\geq2$,
let $\tr_{Q^J/Q}:\bbF_{Q^J}\rightarrow\bbF_{Q}$,
\smash{$\tr_{Q^J/Q}(x):=\sum_{j=0}^{J-1}x^{Q^j}$} be the \textit{field trace},
which is a well-known, nontrivial linear functional of \smash{$\bbF_{Q^J}$}, regarded as a $J$-dimensional vector space over the field $\bbF_Q$.
In this setting, the affine hyperplane $\calE=\set{x\in\bbF_{Q^J}^\times : \tr_{Q^J/Q}(x)=1}$
is a well-known RDS for the cyclic group \smash{$\calG=\bbF_{Q^J}^\times$} of order \smash{$Q^J-1$}~\cite{Pott95}.
Here, the sets \smash{$\set{x\calE: x\in\bbF_{Q^J}^\times}$} are the distinct affine hyperplanes of~\smash{$\bbF_{Q^J}$} that do not contain $0$.
The affine hyperplanes $\calE$ and $x\calE$ are equal if and only if $x=1$,
and are parallel if and only if
\smash{$x\in\bbF_{Q}^\times$}, $x\neq1$.
In any other case, these two affine hyperplanes intersect in an affine subspace of codimension $2$.
Thus, for any $x\in\bbF_{Q^J}^\times$,
\begin{equation*}
\#[\calE\cap(x\calE)]
=\left\{\begin{array}{cl}
Q^{J-1},&\ x=1,\smallskip\\
0,      &\ x\in\bbF_Q^\times,\ x\neq1,\smallskip\\
Q^{J-2},&\ x\notin\bbF_Q^\times.
\end{array}\right.
\end{equation*}
As such, letting $\calK=\bbF_Q^\times$,
$\calE$ is a \smash{$\calK$-$\RDS(\frac{Q^J-1}{Q-1},Q-1,Q^{J-1},Q^{J-2})$} for $\calG$.
Quotienting by $\calK$ thus produces a \smash{$Q^{J-1}$}-element difference set
\smash{$\overline{\calE}
=\set{\overline{x}\in\bbF_{Q^J}^\times/\bbF_Q^\times: \tr_{Q^J/Q}(x)\neq0}$}
for \smash{$\calG/\calK=\bbF_{Q^J}^\times/\bbF_Q^\times$};
the complement of $\overline{\calE}$ is the classical Singer difference set.
When $J\geq4$ is even, a shift of this difference set is known to be fine~\cite{FickusJKM18}.
Below, we show that this fine difference set is in fact composite,
and the RDS from which it arises is part of a larger family of RDSs to which Theorem~\ref{thm.simplicial} applies.

To elaborate, in the special case where $J=2$,
the aforementioned construction reduces to
\begin{equation}
\label{eq.Singer complment 0}
\calE=\set{x\in\bbF_{Q^2}^\times : \tr_{Q^2/Q}(x)=x+x^Q=1}
\end{equation}
being a $\calK$-$\RDS(Q+1,Q-1,Q,1)$ for \smash{$\calG=\bbF_{Q^2}^\times$} where \smash{$\calK=\bbF_Q^\times$}.
Since quotienting $\calE$ by $\calK$ produces a $Q$-element difference set $\overline{D}$ for the group $\calG/\calK$ of order $Q+1$,
we can always shift $\calE$ if necessary so that its quotient avoids $\set{\overline{0}}$,
that is, so that $\calE$ is disjoint from $\calK$.
In fact, when $Q$ is even, no shift is necessary:
every $x\in\bbF_Q$ satisfies $x^Q=x$ and so $\tr_{Q^2/Q}(x)=x+x^Q=x+x=0$,
implying $\calK$ is disjoint from $\calE$.
Moreover, when $Q$ is odd, such a shift can be computed explicitly:
letting $\alpha$ be a generator of $\bbF_{Q^2}^\times$,
\smash{$\beta=\alpha^{-(Q+1)/2}$} satisfies \smash{$\beta^{Q-1}=\alpha^{-(Q^2-1)/2}=-1$};
as such, \smash{$\tr_{Q^2/Q}(\beta x)=(\beta x)+(\beta x)^Q=\beta (x-x^Q)$} for all \smash{$x\in\bbF_{Q^2}^\times$}
implying
\begin{equation*}
\alpha^{(Q+1)/2}\calE
=\beta^{-1}\calE
=\set{x\in\bbF_{Q^2}^\times : \tr_{Q^2/Q}(\beta x)=1}
=\set{x\in\bbF_{Q_2}^\times : x-x^Q=\beta^{-1}}
\end{equation*}
is a $\calK$-RDS for $\calG$ that is disjoint from
\smash{$\calK=\bbF_Q^\times=\set{x\in\bbF_{Q^2}^\times: x-x^Q=0}$}.

Regardless, for any prime power $Q$,
we see that there exists a $\calK$-$\RDS(Q+1,Q-1,Q,1)$ that is disjoint from $\calK$,
namely an RDS that is simplicial in the sense of Definition~\ref{def.simplicial}.
By Theorem~\ref{thm.simplicial}(iii),
the corresponding $(Q^2-1)$-vector harmonic tight frame is a union of $Q+1$ mutually unbiased regular $Q$-simplices.
Theorem~\ref{thm.simplicial} also yields a circulant conference matrix of size $Q+1$.
As we next explain, some quotients of some RDSs of this type naturally arise from composite difference sets in the manner of Theorem~\ref{thm.composite}.

Here, for any prime power $Q$ and $J\geq2$,
we consider the Singer-complement difference set that arises from an affine hyperplane in a $2J$-dimensional extension of $\bbF_Q$,
namely
\begin{equation}
\label{eq.Singer complment 1}
\calD
=\set{\overline{x}\in\bbF_{Q^{2J}}^\times/\bbF_Q^\times: \tr_{Q^{2J}/Q}(x)\neq0}
\end{equation}
where $\tr_{Q^{2J}/Q}(x)=\sum_{j=0}^{2J-1}x^{Q^j}$.
Such difference sets constitute ``half" of all Singer-complement difference sets.
As noted in~\cite{FickusJKM18}, the remaining ``half" of these difference sets---those  that arise in odd-dimensional extensions of $\bbF_Q$---do not seem to be fine in general,
and in fact cannot be fine when $Q$ is an odd power of a prime since in such cases $S$ is not an integer.
From above, we know that $\calD$ is a difference set of cardinality $D=Q^{2J-1}$ for the cyclic group $\calG=\bbF_{Q^{2J}}^\times/\bbF_Q^\times$ of order \smash{$G=\frac{Q^{2J}-1}{Q-1}$}.
It follows that
\begin{equation*}
G-1=Q(\tfrac{Q^{2J-1}-1}{Q-1}),
\quad
G-D=\tfrac{Q^{2J-1}-1}{Q-1},
\quad
S:=[\tfrac{D(G-1)}{G-D}]^{\frac12}=Q^J,
\quad
H:=\tfrac{G}{S+1}=\tfrac{Q^J-1}{Q-1}.
\end{equation*}
For $\calD$ to be fine,
it must be disjoint from a subgroup $\calH$ of $\calG$ of order $H$.
(To be precise, we shall see that there is always a shift of~\eqref{eq.Singer complment 1} for which this occurs.)
Since $\calG$ is cyclic and $S+1$ divides $G$, there is exactly one such subgroup, namely \smash{$\calH=\bbF_{Q^J}^\times/\bbF_Q^\times$}.
If $\calD$ is composite,
Theorem~\ref{thm.composite}(a) implies that it factors in terms of an $\calH$-RDS with parameters
\smash{$(S+1,H,S,\tfrac{S-1}{H})=(Q^J+1,\tfrac{Q^J-1}{Q-1},Q^J,Q-1)$}.
There is a natural candidate for such an RDS:
taking ``Q" in~\eqref{eq.Singer complment 0} to be $Q^J$
gives that
\begin{equation}
\label{eq.Singer complment 2}
\set{x\in\bbF_{Q^{2J}}^\times : \tr_{Q^{2J}/Q^J}(x)=1}
\end{equation}
is a $\calK$-$\RDS(Q^J+1,Q^J-1,Q^J,1)$ where \smash{$\calK=\bbF_{Q^J}^\times$};
quotienting this by \smash{$\bbF_{Q}^\times$} produces an $\calH$-RDS with the desired parameters, namely
\begin{equation*}
\calA
=\set{\overline{x}\in\bbF_{Q^{2J}}^\times/\bbF_{Q}^\times :
\ \exists\ y\in x\bbF_{Q}^\times\text{ s.t. }\tr_{Q^{2J}/Q^J}(y)=1}
=\set{\overline{x}\in\bbF_{Q^{2J}}^\times/\bbF_{Q}^\times : \tr_{Q^{2J}/Q^J}(x)\in\bbF_Q^\times}.
\end{equation*}
Further recall from Theorem~\ref{thm.composite} that when $\calD$ is composite, we expect $\calD_a=\calH\cap(a^{-1}\calD)=\calB$ for all $a\in\calA$.
Here,
for any $a\in\calA$,
writing $a=\overline{y}$ where $y\in\bbF_{Q^{2J}}^\times$ satisfies $\tr_{Q^{2J}/Q^J}(y)\in\bbF_Q^\times$, we have
\begin{align}
\nonumber
\calD_{a}
&=(\bbF_{Q^J}^\times/\bbF_Q^\times)
\cap
\set{\overline{y}^{-1}\overline{x}\in\bbF_{Q^{2J}}^\times/\bbF_Q^\times: \tr_{Q^{2J}/Q}(x)\neq0}\\
\nonumber
&=\set{\overline{y^{-1}x}\in\bbF_{Q^{J}}^\times/\bbF_Q^\times: \tr_{Q^{2J}/Q}(x)\neq0}\\
\label{eq.Singer complment 3}
&=\set{\overline{z}\in\bbF_{Q^{J}}^\times/\bbF_Q^\times: \tr_{Q^{2J}/Q}(yz)\neq0}.
\end{align}
To continue simplifying this,
we recall from finite field theory that finite field traces factor over intermediate fields: the \textit{freshman's dream} implies that for any $x\in\bbF^{Q^{2J}}$
\begin{equation*}
\tr_{Q^J/Q}(\tr_{Q^{2J}/Q^J}(x))
=\tr_{Q^J/Q}(1+x^{Q^J})
=\sum_{j=0}^{J-1}(1+x^{Q^J})^{Q^j}
=\sum_{j=0}^{J-1}(1+x^{Q^{J+j}})
=\tr_{Q^{2J}/Q^J}(x).
\end{equation*}
When combined with the fact that $\tr_{Q^{2J}/Q^J}(y)\in\bbF_Q^\times$,
and the fact that $\tr_{Q^{2J}/Q^J}$ is linear in $Q^J$ while $\tr_{Q^J/Q}$ is linear in $\bbF_Q$,
this implies that for any representative $z\in\bbF_{Q^J}^\times$ of a
coset $\overline{z}\in\bbF_{Q^{J}}^\times/\bbF_Q^\times$,
\begin{equation*}
\tr_{Q^{2J}/Q}(yz)
=\tr_{Q^J/Q}[\tr_{Q^{2J}/Q^J}(yz)]
=\tr_{Q^J/Q}[z\tr_{Q^{2J}/Q^J}(y)]
=[\tr_{Q^{2J}/Q^J}(y)][\tr_{Q^J/Q}(z)].
\end{equation*}
When taken together with the fact that $\tr_{Q^{2J}/Q^J}(y)\neq0$,
this simplifies~\eqref{eq.Singer complment 3} as
\begin{equation*}
\calD_{a}
=\set{\overline{z}\in\bbF_{Q^{J}}^\times/\bbF_Q^\times: [\tr_{Q^{2J}/Q^J}(y)][\tr_{Q^J/Q}(z)]\neq0}
=\set{\overline{z}\in\bbF_{Q^{J}}^\times/\bbF_Q^\times: \tr_{Q^J/Q}(z)\neq0}.
\end{equation*}
That is, for every $a\in\calA$, we have $\calD_a=\calB$
where \smash{$\calB=\set{\overline{z}\in\bbF_{Q^{J}}^\times/\bbF_Q^\times: \tr_{Q^J/Q}(z)\neq0}$} is, by definition, the complement of the canonical Singer difference set in the group $\calH=\bbF_{Q^J}^\times/\bbF_Q^\times$.
Here, the fact that $\calA$ is an $\calH$-RDS for $\calG$ while $\calB$ is a subset of $\calH$ implies that the sets $\set{a\calB}_{a\in\calA}$ are disjoint.
As such, for each $a\in\calA$ we have $a\calB=a\calD_a=\calD\cap(a\calH)\subseteq\calD$.
Thus, $\calD$ contains the disjoint union $\sqcup_{a\in\calA}a\calB$.
Moreover, as the cardinality of $\sqcup_{a\in\calA}a\calB$ is
$\#(\calA)\#(\calB)=Q^J(Q^{J-1})=Q^{2J-1}=\#(\calD)$,
$\calD$ is this disjoint union, meaning $\bfchi_\calD=\bfchi_\calA*\bfchi_\calB$ where $\calA$ has cardinality $S=Q^J$.

Comparing this against Definition~\ref{def.composite},
all that remains to be shown is that $\calD$ is fine, namely that it is disjoint from $\calH=\bbF_{Q^J}^\times/\bbF_Q^\times$.
Though this is not always the case, it is always possible to shift $\calD$ so as to gain this property without losing the others.
In fact, since $\bfchi_\calD=\bfchi_\calA*\bfchi_\calB$ where $\calB$ is a subset of $\calH$ and where $\calA$ is obtained by quotienting~\eqref{eq.Singer complment 2} by $\bbF_Q^\times$,
it suffices to shift~\eqref{eq.Singer complment 2} so that it becomes disjoint from \smash{$\bbF_{Q^J}^\times$}.
We have already seen how to do this:
no shift is necessary when $Q$ is even,
and when $Q$ is odd, we can multiply~\eqref{eq.Singer complment 2} by \smash{$\alpha^{(Q^J+1)/2}$} where $\alpha$ is a generator of \smash{$\bbF_{Q^{2J}}^\times$}.
We summarize these facts as follows:

\begin{theorem}
\label{thm.Singer}
For any prime power $Q$,
the classical RDS~\eqref{eq.Singer complment 0} can be shifted to produce a simplicial $\RDS(Q+1,Q-1,Q,1)$.
Moreover, for any
$J\geq 2$, the complement of the Singer difference set in the cyclic group $\calG$ of order \smash{$\frac{Q^{2J}-1}{Q-1}$} can be shifted so as to produce a
$Q^{2J-1}$-element composite difference set $\calD$ for $\calG$,
and the resulting factors $\calA$ and $\calB$ are a simplicial
$\RDS(Q^J+1,\tfrac{Q^J-1}{Q-1},Q^J,Q-1)$ and the complement of the Singer difference set for the \smash{$\frac{Q^J-1}{Q-1}$}-element subgroup $\calH$ of $\calG$, respectively.
\end{theorem}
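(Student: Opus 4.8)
The plan is to recognize that this theorem collects the facts established in the preceding discussion, so the proof is largely a matter of checking that each claimed property matches the relevant definition. First I would treat the $J=1$ assertion. The intersection count above shows that $\calE$ in~\eqref{eq.Singer complment 0} is a $\calK$-$\RDS(Q+1,Q-1,Q,1)$ for $\calG=\bbF_{Q^2}^\times$ with $\calK=\bbF_Q^\times$. Since Definition~\ref{def.simplicial} characterizes a simplicial RDS as an $\calH$-$\RDS(S+1,H,S,\tfrac{S-1}H)$ that is disjoint from $\calH$, and here $S=Q$, $H=Q-1$ give $S+1=Q+1$ and $\tfrac{S-1}H=1$, it remains only to arrange disjointness from $\calK$. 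I would do this exactly as in the text: when $Q$ is even every $x\in\bbF_Q$ has $\tr_{Q^2/Q}(x)=2x=0\neq1$, so $\calE$ is already disjoint from $\calK$; when $Q$ is odd I would multiply by $\alpha^{(Q+1)/2}$ and use $\beta^{Q-1}=-1$ to rewrite the shifted set as $\set{x:x-x^Q=\beta^{-1}}$, which avoids $\calK=\set{x:x-x^Q=0}$.

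For the main ($J\ge2$) assertion I would verify each clause of Definition~\ref{def.composite} in turn. The parameter computation already yields $D=Q^{2J-1}$, $G=\tfrac{Q^{2J}-1}{Q-1}$, $S=Q^J$ and $H=\tfrac{Q^J-1}{Q-1}$, so $\calH=\bbF_{Q^J}^\times/\bbF_Q^\times$ is the unique subgroup of order $H$. I would then take $\calA$ to be the quotient by $\bbF_Q^\times$ of the RDS~\eqref{eq.Singer complment 2}, which has cardinality $S=Q^J$, and take $\calB$ to be the Singer complement in $\calH$, a difference set for $\calH$. The factorization $\bfchi_\calD=\bfchi_\calA*\bfchi_\calB$ then follows from the trace-factorization computation, which gives $\calD_a=\calB$ for every $a\in\calA$, together with the cardinality check $\#(\calA)\#(\calB)=Q^J\cdot Q^{J-1}=\#(\calD)$. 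Finally, fineness—disjointness of $\calD$ from $\calH$—I would obtain by shifting; since $\bfchi_\calD=\bfchi_\calA*\bfchi_\calB$ with $\calB\subseteq\calH$, shifting $\calD$ reduces to shifting~\eqref{eq.Singer complment 2} to be disjoint from $\bbF_{Q^J}^\times$, which is precisely the $J=1$ argument with ``$Q$'' replaced by $Q^J$. Having verified all clauses of Definition~\ref{def.composite}, I would invoke Theorem~\ref{thm.composite}(a), which identifies $\calA$ as an $\RDS(Q^J+1,\tfrac{Q^J-1}{Q-1},Q^J,Q-1)$ disjoint from $\calH$, i.e.\ as simplicial, completing the proof.

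The only genuinely substantive ingredient is the tower identity $\tr_{Q^{2J}/Q}=\tr_{Q^J/Q}\circ\tr_{Q^{2J}/Q^J}$ and its consequence $\tr_{Q^{2J}/Q}(yz)=[\tr_{Q^{2J}/Q^J}(y)][\tr_{Q^J/Q}(z)]$ for $z\in\bbF_{Q^J}$, which is what forces $\calD_a$ to be independent of $a$ and thereby produces a genuine convolutional factorization. I expect this to be the main obstacle, since everything else is bookkeeping against the definitions; but as this computation has already been carried out in the preceding discussion, the proof itself should amount to little more than assembling those facts and appealing to Theorem~\ref{thm.composite}.
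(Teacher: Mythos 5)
Your proposal is correct and takes essentially the same route as the paper, whose proof of Theorem~\ref{thm.Singer} is exactly the preceding discussion you assemble: the trace tower identity $\tr_{Q^{2J}/Q}(yz)=[\tr_{Q^{2J}/Q^J}(y)][\tr_{Q^J/Q}(z)]$ forcing $\calD_a=\calB$ for all $a\in\calA$, the disjointness-plus-cardinality argument yielding $\bfchi_\calD=\bfchi_\calA*\bfchi_\calB$, and the shift (none needed for even $Q$, multiplication by $\alpha^{(Q^J+1)/2}$ for odd $Q$) securing fineness. The only cosmetic difference is that you certify $\calA$ as simplicial via Theorem~\ref{thm.composite}(a), whereas the paper obtains the parameters $\RDS(Q^J+1,\tfrac{Q^J-1}{Q-1},Q^J,Q-1)$ directly by quotienting the $\calK$-$\RDS(Q^J+1,Q^J-1,Q^J,1)$ of~\eqref{eq.Singer complment 2} by $\bbF_Q^\times$; both are valid.
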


For any prime power $Q$ and $J\geq2$,
applying Theorem~\ref{thm.composite} to these composite difference sets yields an EITFF whose orthogonal projection operators satisfy~\eqref{eq.composite projections}, and also recovers the underlying \smash{$\RDS(Q^J+1,\tfrac{Q^J-1}{Q-1},Q^J,Q-1)$}.
Applying Theorem~\ref{thm.simplicial} to these RDSs produces \smash{$\tfrac{Q^J-1}{Q-1}$} mutually unbiased regular $Q^J$-simplices as well as circulant conference matrices of size $Q^J+1$.
In the special case where $Q=2$, $J=2$, this construction yields the composite difference set for $\bbF_{2^4}^\times\cong\bbZ_{15}$ given in Example~\ref{ex.8x15 fine},
and the resulting conference matrices are given in Example~\ref{ex.8x15 embeddings}.
Meanwhile, applying Theorem~\ref{thm.simplicial} directly to the (suitably shifted) version of the $\RDS(Q+1,Q-1,Q,1)$ given in~\eqref{eq.Singer complment 0} gives $Q-1$ mutually unbiased regular $Q$-simplices as well as circulant conference matrices of size $Q+1$.
This latter construction is more general, as it is the only one that yields circulant conference matrices of size $P+1$ where $P$ is prime.
We note that a harmonic tight frame appearing from this $\RDS(Q+1,Q-1,Q,1)$ has recently appeared elsewhere in the frame literature:
concatenating it with the standard basis yields tight frames that achieve the \textit{orthoplex bound}, and also provide an alternative solution to a reconstruction problem in quantum information theory that is usually solved with mutually unbiased bases~\cite{BodmannH16}.

The fact that complements of Singer difference sets factor in this way is not new.
Indeed, it is the fundamental idea behind the now-classical method of Gordon, Mills and Welch for producing many nonequivalent difference sets with Singer-complement parameters~\cite{GordonMW62,Pott95}.
In fact, every known example of an RDS either quotients to the entire group or has parameters that match those of the complement of a Singer difference set~\cite{Pott95},
namely \smash{$(\frac{G}{H},H,D,\Lambda)$} where \smash{$\tfrac{G}{H}=\tfrac{Q^J-1}{Q-1}$}, $D=Q^{J-1}$ and $H\Lambda=Q^{J-2}(Q-1)$.
In light of Theorem~\ref{thm.composite}(a), it is therefore not too surprising that these are the only composite difference sets we have discovered so far.

\subsubsection{Twin prime power difference sets}

For any odd prime power $Q$,
let \smash{$\calS_Q:=\set{x^2: x\in\bbF_Q^\times}$} and let
\smash{$\calN_Q:=\bbF_Q^\times\backslash\calS_Q$} be the nonzero squares and nonsquares in $\bbF_Q$, with both sets having cardinality $\frac12(Q-1)$.
When $Q$ and $Q+2$ are both powers of odd primes,
the set
\begin{equation}
\label{eq.TPP complement}
\calD
=(\set{0}\times\bbF_{Q+2}^\times)
\sqcup(\calS_Q\times\calN_{Q+2})
\sqcup(\calN_Q\times\calS_{Q+2})
\end{equation}
is a difference set for $\calG=\bbF_Q\times\bbF_{Q+2}$ of cardinality $D=Q+1+2\frac14(Q-1)(Q+1)=\tfrac12(Q+1)^2$,
being the complement of $(\bbF_Q\times\set{0})
\sqcup(\calS_Q\times\calS_{Q+2})
\sqcup(\calN_Q\times\calN_{Q+2})$,
which is the well-known twin prime power difference set for $\calG$.
Here $G=Q(Q+2)$ and so
\begin{equation*}
G-1=Q^2+2Q-1,
\quad
G-D
=\tfrac12(Q^2+2Q-1),
\quad
S:=[\begin{array}{cl}\tfrac{D(G-1)}{G-D}\end{array}]^{\frac12}=Q+1,
\quad
H:=\tfrac{G}{S+1}=Q,
\end{equation*}
and so $\calD$ is fine, being disjoint from the subgroup $\calH=\bbF_Q\times\set{0}$ of order $H$~\cite{FickusJKM18}.
By Theorem~\ref{thm.fine}, every nonidentity coset of $\calH$ thus intersects $\calD$ in exactly $\frac{D}{S}=\frac12(Q+1)$ points.
This fine difference set can only be an amalgam if the necessary condition of Theorem~\ref{thm.EITFF}(b) is met,
namely only if $S^3=(Q+1)^3$ divides $D^2=\tfrac14(Q+1)^4$,
that is, only if $Q\equiv 3\bmod 4$.
Here, it is notable that \smash{$\frac{D-1}{G-D}=1$} regardless of whether $Q$ is congruent to $1$ or $3$ modulo $4$.
That is, even though every composite difference set is an amalgam,
there are at least some cases in which the necessary condition on composite difference sets given in Theorem~\ref{thm.composite}(a) does not imply the necessary condition on amalgams given in Theorem~\ref{thm.EITFF}(b).
Put another way, in order for a composite difference set to exist,
both $\smash{\frac{D^2}{S^3}}$ and \smash{$\frac{D-1}{G-D}$} are necessarily integers.

When $Q\equiv 3\bmod4$, we claim that $\calD$ is an amalgam.
Since $\calD$ is disjoint from $\calH$,
it suffices to show that for all $g\notin\calH$,
$\calD_g=\calH\cap(\calD-g)$ is a difference set for $\calH$.
Here, any $g\notin\calH=\bbF_Q\times\set{0}$ is of the form $g=(x,y)$ where $y\neq0$ and so
\begin{equation*}
\calD-g
=
\bigbracket{\set{-x}\times(\bbF_{Q+2}^\times-y)}
\sqcup\bigbracket{(\calS_Q-x)\times(\calN_{Q+2}-y)}
\sqcup\bigbracket{(\calN_Q-x)\times(\calS_{Q+2}-y)}.
\end{equation*}
When $y\in\calS_{Q+2}$,
the intersection of $\calD-g$ with $\calH=\bbF_Q\times\set{0}$ is thus
\begin{equation*}
\calD_g
=(\set{-x}\times\set{0})\sqcup\bigbracket{(\calN_Q-x)\times\set{0}}
=(\bbF_Q\backslash\calS_Q-x)\times\set{0},
\end{equation*}
which is a difference set for $\calH$,
since $\bbF_Q\backslash\calS_Q-x$ is a difference set for $\bbF_Q$,
being a shift of the complement of the difference set $\calS_Q$.
Similarly, when $y\in\calN_{Q+2}$,
\begin{equation*}
\calD_g
=(\set{-x}\times\set{0})\sqcup\bigbracket{(\calS_Q-x)\times\set{0}}
=(\bbF_Q\backslash\calN_Q-x)\times\set{0},
\end{equation*}
which is also a difference set for $\calH$.
Overall, we see that $\calD$ is an amalgam when $Q\equiv3\bmod4$.

However, as we now explain,
$\calD$ is not composite in general since $\bbF_Q\backslash\calS_Q$ and $\bbF_Q\backslash\calN_Q$ are only shifts of each other when $Q=3$.
Indeed, when $Q=3$,
$\bbF_3\backslash\calS_3=\set{0,2}$ and
$\bbF_3\backslash\calN_3=\set{0,1}$ are shifts of each other,
and so in this case,
\eqref{eq.TPP complement} becomes the composite difference set
\begin{equation*}
\set{(0,1),(0,2),(0,3),(0,4)}\sqcup\set{(1,2),(1,3)}\sqcup\set{(2,1),(2,4)}
\end{equation*}
for $\bbZ_3\times\bbZ_5$.
Inverting the isomorphism $1\mapsto(1,1)$ gives an alternative construction of the composite difference set
$\set{6,12,3,9,7,13,11,14}$ for $\bbZ_{15}$ given in Example~\ref{ex.8x15 fine}.

Meanwhile, for any $Q\neq 3$, $\calS_Q$ and $\calN_Q$ are not shifts of each other meaning~\eqref{eq.TPP complement} is an amalgam that is not a composite difference set.
For an elementary proof of this fact, take any prime power $Q$ and suppose that there exists $x\in\bbF_Q$ such that $\calN_Q=\calS_Q+x$.
Here, $\calS_Q$ and $\calN_Q$ partition $\bbF_Q^\times$ into two sets of cardinality $\frac12(Q-1)$.
Specifically, $\calS_Q$ and $\calN_Q$ are the even and odd powers, respectively, of any generator $\alpha$ of \smash{$\bbF_Q^\times$}.
As such, we cannot have $x=0$, and moreover if $x\in\calN_Q$ then dividing $\calN_Q=\calS_Q+x$ by $x$ gives $\calS_Q=\calN_Q+1$; since $1=1^2\in\calS_Q$, this implies $0\in\calS_Q-1=\calN_Q$,
a contradiction.
Thus, $x\in\calS_Q$, and dividing $\calN_Q=\calS_Q+x$ by $x$ gives $\calN_Q=\calS_Q+1$.
This fact is thus also true for any divisor $Q'$ of $Q$:
when \smash{$\bbF_{Q'}^\times\subseteq\bbF_Q^\times$} we have $\calS_{Q'}\subseteq\calS_Q$ and so $\calS_{Q'}+1\subseteq\calS_Q+1=\calN_Q\subseteq\calN_{Q'}$;
since both $\calS_{Q'}$ and $\calN_{Q'}$ have cardinality $\frac12(Q'-1)$, this implies $\calN_{Q'}=\calS_{Q'}+1$.
This in turn implies that $Q$ is prime:
if not, letting $Q'=P^2$ where $Q=P^J$ for some $J\geq 2$,
we have $P^2\equiv1\bmod 4$,
and so $-1=\beta^{(P^2-1)/2}=(\beta^{(P^2-1)/4})^2$ for any generator $\beta$ of $\bbF_{P^2}^\times$;
thus $0\in\calS_{P^2}+1=\calN_{P^2}$, a contradiction.
Moreover, when $Q=P$ is prime,
we necessarily have that $P\equiv 3\bmod4$ or else $-1\in\calS_P$,
implying $0\in\calN_P$, a contradiction.
To summarize our progress so far, if $\calS_Q$ and $\calN_Q$ are shifts of each other,
then $Q$ is necessarily some prime $P\equiv 3\bmod 4$, and $\calN_P=\calS_P+1$.
In particular, this implies that every square modulo $P$ is followed by a nonsquare modulo $P$.
Since $\calS_P$ and $\calN_P$ partition $\bbF_P^\times=\set{1,2,\dotsc,P-1}$,
this is only possible if $\calS_P=\set{1,3,\dotsc,P-2}$ and $\calN_P=\set{2,4,\dotsc,P-1}$ are the odd and even numbers modulo $P$, respectively.
As we have already seen, this all indeed happens in the special case where $P=3$.
However, it fails for all greater primes since $4=2^2$ is a square.

We summarize these facts as follows:
\begin{theorem}
\label{thm.TPP}
Let $Q$ and $Q+2$ be odd twin prime powers,
and let $\calD$ be the $\frac12(Q+1)^2$ element subset~\eqref{eq.TPP complement} of $\calG=\bbF_Q\times\bbF_{Q+2}$,
namely the complement of the classical twin prime power difference set.
Then $\calD$ is a fine difference set, and is an amalgam if and only if $Q\equiv 3\bmod 4$.
Moreover, $\calD$ is a composite difference set only when $Q=3$.
\end{theorem}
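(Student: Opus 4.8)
The plan is to verify the three assertions in turn, using the known parameters $G=Q(Q+2)$, $D=\frac12(Q+1)^2$, $S=Q+1$ and $H=Q$ together with the candidate subgroup $\calH=\bbF_Q\times\set{0}$ of order $H$. First I would record that $\calD$ is disjoint from $\calH$: every element of~\eqref{eq.TPP complement} has nonzero second coordinate (lying in $\bbF_{Q+2}^\times$, in $\calN_{Q+2}$, or in $\calS_{Q+2}$), whereas $\calH$ consists of the pairs whose second coordinate is $0$. Since $H=\frac{G}{S+1}$, this makes $\calD$ fine in the sense of Definition~\ref{def.fine}; alternatively one may cite~\cite{FickusJKM18}. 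Theorem~\ref{thm.fine} then guarantees $\#(\calD_g)=\frac DS=\frac12(Q+1)$ for every $g\notin\calH$, which is a useful bookkeeping check.

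For the amalgam characterization, necessity of $Q\equiv3\bmod4$ is immediate from Theorem~\ref{thm.EITFF}(b): being an amalgam forces $S^3$ to divide $D^2$, and here $\frac{D^2}{S^3}=\frac{Q+1}{4}$ is an integer precisely when $4\mid Q+1$. For sufficiency I would assume $Q\equiv3\bmod4$ and compute $\calD_g$ directly for an arbitrary $g=(x,y)$ with $y\neq0$. Subtracting $g$ from~\eqref{eq.TPP complement} and intersecting with $\calH$ annihilates every block except the one whose second coordinate can vanish: the block $\set{0}\times\bbF_{Q+2}^\times$ always contributes the single point $(-x,0)$, while exactly one of the two product blocks contributes according to whether $y\in\calS_{Q+2}$ or $y\in\calN_{Q+2}$. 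The first case yields $\calD_g=(\bbF_Q\backslash\calS_Q-x)\times\set{0}$ and the second $\calD_g=(\bbF_Q\backslash\calN_Q-x)\times\set{0}$. Because $Q\equiv3\bmod4$ makes $\calS_Q$ a Paley difference set for $\bbF_Q$ (and $\calN_Q$ its image under the additive automorphism $y\mapsto\alpha y$ for a nonsquare $\alpha$, hence also a difference set), each $\calD_g$ is a shift of the complement of a difference set, and so is itself a difference set for $\calH\cong\bbF_Q$. Thus $\calD$ is an amalgam.

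For the composite claim, the point is that being composite demands strictly more than being an amalgam: by Definition~\ref{def.composite} all the nontrivial $\calD_g$ must be translates of a single set $\calB$. From the computation above the nontrivial $\calD_g$ fall into the two families $(\bbF_Q\backslash\calS_Q-x)\times\set{0}$ and $(\bbF_Q\backslash\calN_Q-x)\times\set{0}$, and these agree up to translation exactly when $\calS_Q$ and $\calN_Q$ are translates of one another. I would then settle when that can occur by an elementary argument: assuming $\calN_Q=\calS_Q+x$, one rules out $x=0$ and $x\in\calN_Q$ (the latter by dividing through by $x$ and using $1\in\calS_Q$ to force $0\in\calN_Q$), leaving $x\in\calS_Q$ and, after rescaling, $\calN_Q=\calS_Q+1$. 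Passing to any subfield $\bbF_{Q'}^\times\subseteq\bbF_Q^\times$ preserves this relation, which excludes proper prime powers (a square subfield $\bbF_{P^2}$ has $-1$ a square, placing $0$ in the wrong class) and forces $Q=P$ prime with $P\equiv3\bmod4$; the relation $\calN_P=\calS_P+1$ then says squares and nonsquares strictly alternate, i.e.\ are the odds and evens modulo $P$, which breaks as soon as $4=2^2$ is a square, leaving only $Q=3$. Conversely, $Q=3$ is genuinely composite, as exhibited earlier. The delicate part of the whole theorem is precisely this last descent: computing $\calD_g$ and recognizing the translate-criterion is routine, but pinning down exactly which $Q$ admit $\calS_Q$ and $\calN_Q$ as mutual translates requires the careful subfield reduction and the alternation observation.
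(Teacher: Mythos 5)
Your proposal is correct and follows essentially the same route as the paper's own argument: fineness via disjointness from $\calH=\bbF_Q\times\set{0}$, necessity of $Q\equiv3\bmod4$ from the divisibility condition that $S^3$ divide $D^2$ in Theorem~\ref{thm.EITFF}(b), sufficiency by computing $\calD_g$ case-by-case as shifted complements of the Paley difference set $\calS_Q$ and its automorphic image $\calN_Q$, and the composite claim via the same elementary descent (ruling out $x=0$ and $x\in\calN_Q$, normalizing to $\calN_Q=\calS_Q+1$, passing to subfields to force $Q$ prime with $Q\equiv3\bmod4$, and observing the alternation of squares and nonsquares fails once $4=2^2$ is a square). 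Your reduction of the translate criterion for the two families of $\calD_g$ to whether $\calS_Q$ and $\calN_Q$ are translates of one another is also exactly the paper's step, since complementation in $\bbF_Q$ commutes with translation, so there is nothing to add.
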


Applying Theorem~\ref{thm.EITFF}(c) to these amalgams produces circulant conference matrices of size $S+1=Q+2$.
These conference matrices are distinct from those that arise from any known simplicial RDS via Theorem~\ref{thm.simplicial} since, as mentioned above, all of those are of size $Q+1$ where $Q$ is a prime power.
For example, when $Q=11$, applying Theorem~\ref{thm.EITFF}(c) to~\eqref{eq.TPP complement} gives, to our knowledge, the only known construction of a $13\times 13$ circulant conference matrix;
since $12$ is not a prime power, the requisite RDS needed to apply Theorem~\ref{thm.simplicial} to produce such a matrix is not known to exist~\cite{Pott95}.
These difference sets also provide our only known construction of amalgams that are not cyclic: when $Q=27$, for example, \eqref{eq.TPP complement} is an amalgam for the group $\bbF_{27}\times\bbF_{29}\cong\bbZ_3\times\bbZ_3\times\bbZ_{87}$.
We further note that in the case where $Q=11$,
the fact that $\calD$ is not composite means we should not expect the orthogonal projection operators onto the subspaces that constitute the corresponding EITFF to satisfy the property given in Theorem~\ref{thm.composite}(b);
an explicit computation in this case reveals that they indeed do not have this property.

\subsubsection{McFarland difference sets}

For any prime power $Q$ and integer $J\geq 2$,
regard $\bbF_Q^J$ as a $J$-dimensional vector space over the finite field $\bbF_Q$ of order $Q$.
Let $\calK$ be any abelian group of order \smash{$\frac{Q^J-1}{Q-1}+1$},
and let $\set{\calV_k}_{k\in\calK,\,k\neq0}$ be any enumeration of the distinct hyperplanes of \smash{$\bbF_Q^J$} according to the nonzero members of $\calH$.
The set \smash{$\calD=\sqcup_{k\in\calK,\,k\neq0}\set{(k,v): v\in\calV_k}$}
is then a \textit{McFarland} difference set for the group \smash{$\calG=\calK\times\bbF_Q^J$}.
Here, a direct calculation reveals
\begin{equation*}
D=Q^{J-1}(\tfrac{Q^J-1}{Q-1}),
\quad
G=Q^J(\tfrac{Q^J-1}{Q-1}+1),
\quad
%G-1=(Q^J+Q-1)\tfrac{Q^J-1}{Q-1},
%\quad
%G-D=Q^{J-1}(Q^J+Q-1),
%\quad
S:=[\tfrac{D(G-1)}{G-D}]^{\frac12}=\tfrac{Q^J-1}{Q-1},
\quad
H:=\tfrac{G}{S+1}=Q^J.
\end{equation*}
As noted in~\cite{FickusJKM18}, every such $\calD$ is fine,
being disjoint from a subgroup of order \smash{$H=Q^J$},
namely $\calH=\set{0}\times\bbF_Q^J$.
By Theorem~\ref{thm.fine}, every coset of $\calH$ intersects $\calD$ in
\smash{$\frac{D}{S}=Q^{J-1}$} points;
by inspection, these intersections are of the form $\set{k}\times\calV_k$ for some $k\in\calK$, $k\neq0$.
However,
\begin{equation*}
\tfrac{D^2}{S^3}
=\tfrac{Q^{2J-2}(Q-1)}{Q^J-1}
=Q^{J-2}(Q-1)+\tfrac{Q^{J-2}(Q-1)}{Q^J-1}
\end{equation*}
is never an integer since $0<Q^{J-2}(Q-1)<Q^J-1$.
As such, any such difference sets are never amalgams, and so are never composite.

That said, even in this case, much of the machinery developed to prove Theorem~\ref{thm.EITFF} still provides insights into these fine difference sets that go beyond those provided by the techniques of~\cite{FickusJKM18}.
For instance, when $Q=2$ and $J=2$,
$\calD=\set{1000,1001,0100,0110,1100,1111}$ is a $6$-element McFarland difference set for the group $\calG=\bbZ_2^4$ of order $16$.
%Every nonzero $g\in\calG$ appears exactly twice in the corresponding difference table:
%\begin{equation*}
%\begin{array}{c|cccccc}
%-   &1000&1001&0100&0110&1100&1111\\\hline
%1000&0000&0001&1100&1110&0100&0111\\
%1001&0001&0000&1101&1111&0101&0110\\
%0100&1100&1101&0000&0010&1000&1011\\
%0110&1110&1111&0010&0000&1010&1001\\
%1100&0100&0101&1000&1010&0000&0011\\
%1111&0111&0110&1011&1001&0011&0000
%\end{array}.
%\end{equation*}
Here, $D=6$, $G=16$, $S=3$,
and $\calD$ is disjoint from the subgroup $\calH=\set{0000,0010,0001,0011}$ of order
\smash{$H=\frac{G}{S+1}=4$}.
We identify $\hat{\calG}$ with $\bbZ_2^4$,
regarding $n_1n_2n_3n_4$ as the character
$g_1g_2g_3g_4\mapsto(-1)^{g_1n_1+g_2n_2+g_3n_3+g_4n_4}$.
In particular, $\calH^\perp$ is identified with those $n_1n_2n_3n_4$ such that $(-1)^{n_3}=(-1)^{n_4}=1$,
namely $\set{0000,1000,0100,1100}$.
Here, to form the corresponding $16\times 16$ character table, we elect to order these characters lexicographically,
that is,
as $\set{0000,1000,0100,1100,0010,\dotsc,1111}$.
Under these arbitrarily chosen orderings of $\calD$ and $\hat{\calG}$,
the synthesis operator of the corresponding harmonic $\ETF(6,16)$,
formed by extracting the $6$ rows of the character table that correspond to $\calD$ and then normalizing columns, is:
\begin{equation*}
\bfPhi=\frac1{\sqrt{6}}\left[\begin{array}{rrrrrrrrrrrrrrrr}
1&-1& 1&-1& 1&-1& 1&-1& 1&-1& 1&-1& 1&-1& 1&-1\\
1&-1& 1&-1& 1&-1& 1&-1&-1& 1&-1& 1&-1& 1&-1& 1\\
1& 1&-1&-1& 1& 1&-1&-1& 1& 1&-1&-1& 1& 1&-1&-1\\
1& 1&-1&-1&-1&-1& 1& 1& 1& 1&-1&-1&-1&-1& 1& 1\\
1&-1&-1& 1& 1&-1&-1& 1& 1&-1&-1& 1& 1&-1&-1& 1\\
1&-1&-1& 1&-1& 1& 1&-1&-1& 1& 1&-1& 1&-1&-1& 1
\end{array}\right].
\end{equation*}
Here, for any $n_1n_2n_3n_4$ in $\hat{\calG}$,
the corresponding $6\times 4$ submatrix~\eqref{eq.def of Phi_gamma} is
\begin{equation*}
\bfPhi_{n_1n_2n_3n_4}
=\left[\begin{array}{cccc}
\bfphi_{n_1n_2n_3n_4}&
\bfphi_{(n_1+1)n_2n_3n_4}&
\bfphi_{n_1(n_2+1)n_3n_4}&
\bfphi_{(n_1+1)(n_2+1)n_3n_4}
\end{array}\right].
\end{equation*}
As such, the above matrix $\bfPhi$ can be regarded as the concatenation of $\bfPhi_{0000}$, $\bfPhi_{0010}$, $\bfPhi_{0001}$ and $\bfPhi_{0011}$.
Meanwhile, the matrix $\bfPsi$ in~\eqref{eq.def of Psi} is obtained by removing the first row of the character table of $\calG/\calH\cong\bbZ_2\times\bbZ_2$, giving the synthesis operator of a particularly nice tetrahedron:
\begin{equation*}
\bfPsi=\frac1{\sqrt{3}}\left[\begin{array}{rrrr}
1&-1& 1&-1\\
1& 1&-1&-1\\
1&-1&-1& 1\\
\end{array}\right].
\end{equation*}
Theorem~\ref{thm.EITFF}(a) then gives $\bfPhi_{n_1n_2n_3n_4}=\bfE_{n_1n_2n_3n_4}\bfPsi$ where
\begin{equation*}
\bfE_{0000}=\left[\begin{array}{rrr}
 1& 0& 0\\
 1& 0& 0\\
 0&\phantom{-}1& 0\\
 0& 1& 0\\
 0& 0&\phantom{-}1\\
 0& 0& 1
\end{array}\right],\
\bfE_{0010}=\left[\begin{array}{rrr}
 1& 0& 0\\
 1& 0& 0\\
 0& 1& 0\\
 0&-1& 0\\
 0& 0& 1\\
 0& 0&-1
\end{array}\right],\
\bfE_{0001}=\left[\begin{array}{rrr}
 1& 0& 0\\
-1& 0& 0\\
 0&\phantom{-}1& 0\\
 0& 1& 0\\
 0& 0& 1\\
 0& 0&-1
\end{array}\right],\
\bfE_{0011}=\left[\begin{array}{rrr}
 1& 0& 0\\
-1& 0& 0\\
 0& 1& 0\\
 0&-1& 0\\
 0& 0&\phantom{-}1\\
 0& 0& 1
\end{array}\right].
\end{equation*}
As such, our $\ETF(6,16)$ is comprised of four tetrahedra,
each embedded in a $3$-dimensional subspace
$\calU_{n_3n_4}=\calU_{\overline{n_1n_2n_3n_4}}=\rmC(\bfE_{n_1n_2n_3n_4})$ of $\bbC^{\calD}$.
Moreover, Theorem~\ref{thm.EITFF}(a) implies every corresponding cross-Gram matrix is diagonal.
In fact, by inspection, the cross-Gram matrix of any pair of the above four isometries is one of the following three matrices:
\begin{equation*}
\left[\begin{array}{ccc}
1&0&0\\
0&0&0\\
0&0&0
\end{array}\right],\quad
\left[\begin{array}{ccc}
0&0&0\\
0&1&0\\
0&0&0
\end{array}\right],\quad
\left[\begin{array}{ccc}
0&0&0\\
0&0&0\\
0&0&1
\end{array}\right].
\end{equation*}
From~\cite{FickusJKM18},
we already knew that subspaces $\set{\calU_{00},\calU_{10},\calU_{01},\calU_{11}}$ are equi-chordal,
implying all of these cross-Gram matrices have the same Frobenius norm.
Here, we can say more:
taking the absolute values of these diagonal entries of these cross-Gram matrices reveals that the principal angles between any pair of these subspaces are $\set{0,\frac{\pi}2,\frac{\pi}{2}}$,
meaning in particular that any pair of these subspaces intersect in a line.
This is a hallmark feature of the subspaces spanned by the regular simplices that comprise a Steiner ETF.
In fact, it is known that harmonic ETFs arising from McFarland difference sets are unitarily equivalent to certain Steiner ETFs arising from affine geometries~\cite{JasperMF14}.
What is remarkable is that, even if this fact was not already known,
the machinery of Theorem~\ref{thm.EITFF} would have naturally led one to this realization.

%%%%%%%%%%%%%%%%%%%%%%%%%%%%%%%%%%%%%%%%%%%%%%%%%%%%%%%%%%%%%%%%
\section{Conclusions}
%%%%%%%%%%%%%%%%%%%%%%%%%%%%%%%%%%%%%%%%%%%%%%%%%%%%%%%%%%%%%%%%

We now have three, increasingly nice types of difference sets, namely
fine difference sets, amalgams and composite difference sets, as given in Definitions~\ref{def.fine}, \ref{def.amalgam} and~\ref{def.composite}, respectively.
Every composite difference set is an amalgam, and every amalgam is fine.
In terms of the sets $\calD_g$ defined in~\eqref{eq.def of D_g},
being fine equates to $\calD_g$ being empty when $g\in\calH$, and having equal cardinality otherwise.
When $\calD$ is an amalgam, we further have that each $\calD_g$ with $g\notin\calH$ is a difference set for $\calH$.
When $\calD$ is a composite, we even further have that any two such $\calD_g$ are translates.

Properly shifted, the complements of ``half" of all Singer difference sets are fine, and all of these are composite.
Meanwhile, the complement of every twin prime power difference set is fine,
and these are only amalgams when $Q\equiv3\bmod 4$, and only composite when $Q=3$.
No McFarland difference set is an amalgam.
Overall, we see that there are an infinite number of composite difference sets,
as well as an infinite number of fine difference sets that are not amalgams.
Moreover, there is a family of amalgams that are not composite, but whether or not this family is infinite depends on a form of the twin prime conjecture.

Every fine difference set $\calD$ yields an ECTFF in a natural way,
and this ECTFF is moreover an EITFF if and only if $\calD$ is an amalgam.
When $\calD$ is moreover composite, the corresponding orthogonal projection operators behave even more nicely than usual, satisfying~\eqref{eq.composite projections}.

Every composite difference set yields a simplicial RDS.
Moreover, every amalgam and every simplicial RDS yields a circulant conference matrix.
These constructions are two distinct generalizations of a common construction that applies to composite difference sets,
with each generalization producing examples that the other does not:
for any prime power $Q$, a simplicial RDS yields a circulant conference matrix of size $Q+1$; when $Q$ and $Q+2$ are twin prime powers with $Q\equiv 3\bmod 4$, the corresponding amalgam yields a circulant conference matrix of size $Q+2$.

%%%%%%%%%%%%%%%%%%%%%%%%%%%%%%%%%%%%%%%%%%%%%%%%%%%%%%%%%%%%%%%%
\section*{Acknowledgments}
We thank the three anonymous reviewers as well as Profs.\ Dustin Mixon and John Jasper for their many helpful suggestions.
The views expressed in this article are those of the authors and do not reflect the official policy or position of the United States Air Force, Department of Defense, or the U.S.~Government.

\appendix

\section{Alternate proofs of some parts of Theorem~\ref{thm.fine} }

For a combinatorial proof of some parts of Theorem~\ref{thm.fine},
recall that when $\calD$ is a difference set for $\calG$,
every nonzero member of $\calG$ appears exactly $\Lambda=\frac{D(D-1)}{G-1}=D-\frac{D^2}{S^2}$ times in its difference table.
As such, exactly $(H-1)\Lambda$ of these differences are nontrivial members of $\calH$.
Moreover, any given $g,g'\in\calG$ have the property that $g-g'\in\calH$ if and only if $g,g'$ lie in a common coset of $\calH$.
In particular,
partitioning $\calD$ as
$\calD=\sqcup_{g\in\calG/\calH}(g+\calD_g)$ leads to a corresponding partition of the nonzero $\calH$-valued entries of the difference table of $\calD$:
\begin{equation*}
\set{(d,d')\in\calD\times\calD: 0\neq d-d'\in\calH}
=\bigsqcup_{g\in\calG/\calH}\set{(d,d')\in(g+\calD_g)\times(g+\calD_g): d\neq d'}.
\end{equation*}
Counting these sets gives
$(H-1)\Lambda=\sum_{g\in\calG/\calH}D_g(D_g-1)=-D+\sqcup_{g\in\calG/\calH}D_g^2$
where $D_g:=\#(\calD_g)$.
Moreover, since $\calD$ is disjoint from $\calH$ we have that $D_g=0$ for the unique coset representative $g$ that lies in $\calH$, that is, such that $\overline{g}=\overline{0}$.
Overall, we have \smash{$\frac GH-1$} nonnegative integers
$\set{D_g}_{g\in\calG/\calH,\overline{g}\neq\overline{0}}$
such that $\sum_{\overline{g}\neq\overline{0}}D_g=D$ and
\smash{$\sum_{\overline{g}\neq\overline{0}}D_g^2=D+(H-1)\Lambda=(D-\Lambda)+H\Lambda$}.
Applying the Cauchy-Schwarz inequality to this sequence thus gives
\begin{equation}
\label{eq.pf of fine 2}
D^2\leq(\tfrac{G}{H}-1)[(D-\Lambda)+H\Lambda],
\end{equation}
where equality holds if and only if $D_g$ is constant over all $g\notin\calH$.
Now recall that \smash{$\Lambda=D-\frac{D^2}{S^2}$} where \smash{$S^2=\frac{D(G-1)}{(G-D)}$} and so
\smash{$\frac{GS^2\Lambda}{D^2}=G(\frac{S^2}{D}-1)=S^2-1$}.
Multiplying~\eqref{eq.pf of fine 2} by \smash{$\frac{GS^2}{HD^2}$} thus gives
\begin{equation*}
\tfrac{GS^2}{H}
\leq\tfrac{GS^2}{HD^2}(\tfrac{G}{H}-1)(\tfrac{D^2}{S^2}+H\Lambda)
=(\tfrac{G}{H}-1)(\tfrac{G}{H}+\tfrac{GS^2\Lambda}{D^2})
=(\tfrac{G}{H}-1)(\tfrac{G}{H}+S^2-1),
\end{equation*}
that is, $S^2\leq(\frac{G}{H}-1)^2$, namely the claim in Theorem~\ref{thm.fine}
that \smash{$H\leq\frac{G}{S+1}$}.
Moreover, when condition (i) of Theorem~\ref{thm.fine} holds, reversing the above argument gives equality in~\eqref{eq.pf of fine 2},
meaning \smash{$\set{D_g}_{g\in\calG/\calH,\overline{g}\neq\overline{0}}$} consists of $S$ equal numbers that sum to $D$, implying condition (iii).

For yet another alternative proof of one conclusion of Theorem~\ref{thm.fine}, note that if $\calD$ is disjoint from a subgroup $\calH$ of $\calG$ of order \smash{$H>\frac{G}{S+1}$},
then~\eqref{eq.simplex sum} implies that $\set{\bfphi_\gamma}_{\gamma\in\calH^\perp}$ is a linearly dependent subsequence of the harmonic ETF \smash{$\set{\bfphi_\gamma}_{\gamma\in\hat{\calG}}$} that consists of fewer than $S+1$ vectors; since \smash{$\set{\bfphi_\gamma}_{\gamma\in\hat{\calG}}$} has coherence $\frac1S$,
this violates a fact from compressed sensing known as the \textit{spark bound}~\cite{FickusJKM18}.

\end{document}